\newtheorem{theorem}{Théorème}[section]
\newtheorem{proposition}[theorem]{Proposition}
\newtheorem{lemma}[theorem]{Lemme}
\newtheorem{Corollaire}[theorem]{Corollaire}
\newtheorem{definition}[theorem]{Définition}
\newtheorem{example}[theorem]{Exemple}
\newtheorem{remarque}[theorem]{Remarque}
\newcommand{\vc}{\|\cdot\|}
\newcommand{\C}{\mathbb{C}}
\newcommand{\Z}{\mathbb{Z}}
\newcommand{\lra}{\longrightarrow}
\newcommand{\al}{\alpha}
\newcommand{\la}{\lambda}
\newcommand{\R}{\mathbb{R}}
\newcommand{\cl}{\mathcal{C}^\infty}
\newcommand{\p}{\mathbb{P}}
\newcommand{\eps}{\varepsilon}
\newcommand{\vf}{\varphi}
\newcommand{\Q}{\mathbb{Q}}
\newcommand{\N}{\mathbb{N}}
\newcommand{\z}{\overline{z}}
\newcommand{\pt}{\partial}
\title{Sur  la fonction Zêta associée au Laplacien singulier $\Delta_{\overline{\mathcal{O}(m)}_\infty}$}
\date{}
\author{Mounir Hajli}
\begin{document}

\maketitle

\begin{abstract}
Dans \cite{Mounir1}, on a déterminé le spectre du  Laplacien singulier attaché aux métriques canoniques sur $\p^1$. Le but de cet
 article
est l'étude  de  $\zeta_{\Delta_{\overline{\mathcal{O}(m)}_\infty}}$, la fonction Zêta associée à ce spectre. On montre qu'elle est
prolongeable en une fonction holomorphe en zéro et on détermine les valeurs de  $\zeta_{\Delta_{\overline{\mathcal{O}(m)}_\infty}}(0)$ et $\zeta'_{\Delta_{\overline{\mathcal{O}(m)}_\infty}}(0)$ pour tout $m\in
\N$:
\begin{align*}
\zeta_{\Delta_{\overline{\mathcal{O}(m)}_\infty}}(0)&=-\frac{2}{3}-\frac{m}{2},\\
\zeta'_{\Delta_{\overline{\mathcal{O}(m)}_\infty}}(0)&=4\zeta'_\Q(-1)-\frac{1}{6}-\log \frac{\,{}(m+2)^{{m+1}}}{((m+1)!)^2}.
 \end{align*}
\end{abstract}

\tableofcontents

\section{Introduction}
Dans \cite{Mounir1}, on a
construit un opérateur différentiel singulier associé aux métriques canoniques
sur
$\p^1$, qu'on noté par $\Delta_{\overline{\mathcal{O}(m)}_\infty}$,  et on a montré qu'il possède un
spectre discret, positif et infini, qu'on a calculé explicitement.
 Un des objectifs de cet article est de proposer une alternative
aux calculs de Gillet, Soulé et Zagier, cf. \cite{GSZ}, en étudiant le spectre du Laplacien
$\Delta_{\overline{\mathcal{O}(m)}_\infty}$ associé aux métriques canoniques sur $\p^1$.  On souhaite étudier
$\zeta_{\Delta_{\overline{\mathcal{O}(m)}_\infty}} $ la fonction
Zêta associée à ce spectre. Comme les métriques ne sont pas $\cl$ alors la théorie spectrale des opérateurs Laplaciens, voir
par exemple \cite{heat}, n'est plus
applicable à  cette
situation. On développe  une méthode de régularisation de fonctions Zêta, qui va nous permettre dans la suite de déduire
 que
$\zeta_{\Delta_{\overline{\mathcal{O}(m)}_\infty}} $  est prolongeable  en une fonction holomorphe au voisinage de zéro, en
plus on calcule explicitement sa valeur, ainsi que sa dérivée en ce point.\\

Commençons par rappeler quelques éléments de la théorie spectrale des opérateurs Laplaciens, voir par exemple \cite{heat}. Soit $(X,h_X)$ une variété kählérienne compacte et $h_X$ une métrique hermitienne de classe $\cl$, et $(L,h)$ un
fibré en droites holomorphe muni d'une métrique hermitienne $h$ de classe $\cl$ sur $X$. Dans \cite{RaySinger}, Ray et
et Singer associent à la donnée $\bigl((TX,h_X);(L,h)\bigr)$, un réel appelé la torsion analytique holomorphe, qu'on note par $T\bigl((TX,h_X);\overline{L}\bigr)$ est définie par un procédé de régularisation du produit infini des valeurs propres non nuls du  Laplacien $\Delta_{\overline{L}}^q$ agissant sur $A^{(0,q)}(X,L)$ pour $q=0,1,\ldots,\dim_\C(X)$. Si l'on note par $\al_{q,1}\leq \al_{q,2}\leq \ldots$ les valeurs propres non nulles de $\Delta_{\overline{L}}^q$ comptées avec multiplicité, et par
\[
 \zeta_{\Delta_{\overline{L}}^q}(s)=\sum_{k\geq 1}\frac{1}{\al_{q,k}^s},
\]
la fonction Zêta associé, on montre  par des méthodes de la théorie du noyau de la chaleur, voir  \cite{heat}, que  $\zeta_{\Delta_{\overline{L}}}$ converge pour   $\mathrm{Re}(s)>1$ avec un pôle en $s=1$ et s'étend en une fonction méromorphe sur $\C$ entier et qu'elle est holomorphe au voisinage de $s=0$, voir \cite[§ 9.6]{heat}. On pose alors
\[
 T\bigl((TX,h_X);\overline{L}\bigr)=
\sum_{q=0}^n(-1)^{q+1}q \zeta_{\Delta_{\overline{L}}^q}'(0).
\]

Dans \cite{Voros}, Voros étudie la  notion du produit régularisé associée à une suite abstraite de réels
$\{\al_k\}_{k\in \N}$ qui vérifie  les  hypothèses suivantes:
\begin{enumerate}
 \item La suite $\{\al_k\}_{k\in \N}$ est croissante et non bornée.
\item La fonction $\theta(t):=\sum_{k=1}^\infty e^{-t\al_k}$ converge pour tout $t>0$ et elle  admet un
développement en série de Laurent pour $t$ assez petit de la forme suivante:
\[
 \theta(t)=\sum_{n=0}^\infty c_{i_n}t^{i_n}
\]
avec $\{ i_n\}_{n\geq 0}$ une suite de réels croissante non bornée et telle que $i_0<0$.\\
\end{enumerate}

Alors si l'on pose
\[
 Z(s,a):=\sum_{k\geq 1}\frac{1}{(\al_k+a)^s}\quad \text{pour}\; \mathrm{Re}(s)\gg 1,
\]
appelée la fonction Zêta généralisée, voir \cite[p. 444]{Voros},  on  vérifie  qu'elle  s'étend en une
fonction méromorphe sur $\C$  et elle est holomorphe en zéro. On définit donc, le produit régularisé des
réels $\al_1,\al_2,\ldots$ plus généralement celui de $\al_1-\al,\al_2-\al,\ldots$ (pour tout $\al\leq 0$)
comme étant le réel:
\[
 D(\al):=\exp(-\zeta'(0,-\al))
\]
D'après Voros, on appelle  $D(0)$ le déterminant régularisé de la suite $\{\al_k\}_{k\in \N}$. Lorsque
$\{ \al_k\}_{k\geq 1}$ sont les valeurs propres d'un opérateur Laplacien alors, par définition,
on vérifie
que le déterminant régularisé correspond à l'exponentielle de la torsion analytique de
Ray-Singer. Voros
 montre aussi un
résultat important qui décrit explicitement le  comportement asymptotique  de $\log D$ pour
$\al\mapsto
-\infty$ en  fonction des $\{c_{i_n}\}_{n\in \N}$, voir \cite[p. 448 (5.1)]{Voros}.
Réciproquement, si l'on
donne une fonction $\Delta$ dont tous ses zéros sont positifs et admettant un développement
asymptotique
du même type, alors on montre que $\zeta$, la fonction zêta associée à ces zéros s'étend en une
fonction
méromorphe sur $\C$  et elle est holomorphe au voisinage de zéro, en plus on montre que
$\zeta(0)$ et
$\zeta'(0)$ se déduisent du développement asymptotique du $\log \Delta$.\\

La torsion analytique est un ingrédient fondamental de la géométrie d'Arakelov. Dans le cas de
fibré en droites trivial sur l'espace projectif muni de la métrique de Fubini-Study, Ikeda et
Taniguchi  déterminent explicitement le spectre du Laplacien associé à ces métriques, voir
\cite{Spectra}. En se basant sur leurs résultats, Gillet, Soulé et Zagier  calculent
explicitement
la torsion analytique dans ce cas \cite{GSZ}. Gillet et Soulé utilisent ce calcul pour démontrer
leur théorème de Riemann-Roch arithmétique
pour le fibré en droites trivial sur l'espace projectif muni de la métrique de Fubini-Study, voir
\cite[Théorème 2.1.1]{GSZ}.\\

%rappelons que le spectre a été completement calculé pour tout $m\in \N$ dans
%\cite{Mounir1}.% Nous proposerons deux approches différentes et nous montrons que que Pour cela nous allons introduire la notion de famille de fonctions Zêta
%régularisable, nous donnerons une formule  permettant de  calculer explicitement le déterminant
%régularisé pour ce type de fonctions Zêta. Nous montrerons que
 %général permettant d'étudier la fonction Zêta
%associée à  $\Delta_{\overline{\mathcal{O}(m)}_\infty}$ et de calculer explicitement sa valeur

%Dans cet article nous proposons une alternative aux calculs de Gillet, Soulé et Zagier dans
%\cite{GSZ} basée sur nos
%précédents calculs du spectre du Laplacien $\Delta_{\overline{\mathcal{O}(m)}_\infty}$, voir
%\cite{Mounir1} et la
%notion de familles de fonctions Zêta régularisable qu'on introduit dans ce texte.\\

Ce texte  est constitué de deux parties logiquement indépendantes:\\

Dans la section \eqref{tttt}, on étend la notion de la torsion analytique holomorphe aux fibrés
en droites munis de
métriques admissibles sur $\p^1$. Rappelons qu'une métrique est dite admissible si elle est
limite uniforme d'une suite de métriques positives et de classe $\cl$. Notons qu'une métrique
admissible   peut être singulière, dans ce cas  la théorie de noyau de
chaleur développée dans \cite{heat} n'est plus valable et donc on ne peut pas associer
directement une  torsion analytique  à ce genre de métrique. Notre idée  consiste en premier
temps à approximer une métrique
admissible par une suite de métriques positives de classe $\cl$ et d'étudier la variation de la
torsion analytique associée à cette suite, moyennant  la formule des anomalies. On peut
aussi considérer une suite de métriques positives de classe $\cl$ qui converge uniformément vers
une métrique  admissible sur $T\p^1$ et d'étudier le comportement de la torsion analytique dans
ce cas. En résumé, lorsque  $\overline{L}$ et $\overline{T\p^1}$ sont munis de métriques
admissibles, nous établissons qu'on peut étendre la notion de torsion analytique à cette situation, voir (théorème
\eqref{torsiongeneralisée}), qu'on
appellera  torsion analytique holomorphe
généralisée associée à $\overline{L}$ et $\overline{T\p^1}$ et  on la  notera par $T_g\bigl(\overline{T\p^1};\overline{L}\bigr)$. Nous nous intéresserons ensuite à une classe de métriques admissibles non nécessairement $\cl$ à savoir les   métriques canoniques sur
$\p^1$. Ces dernières ont la particularité
d'être  déterminées uniquement par la combinatoire de $\p^1$, vue comme variété torique. Nous
appliquerons la théorie developpée ci-dessus pour   calculer $T_g\bigl(\overline{T\p^1}_\infty;
\overline{\mathcal{O}(m)}_\infty \bigr)$; la valeur de la  torsion analytique généralisée associée
à $\overline{T\p^1}_\infty$ et $
\overline{\mathcal{O}(m)}_\infty $ munis de leur métriques canoniques. On obtient:
 \[
T_g\bigl(\overline{T\p^1}_\infty;\overline{\mathcal{O}(m)}_\infty\bigr)=4\zeta'_\Q(-1)-\frac{1}{6}
-\log
\frac{\,{}(m+2)^{{m+1}}}{\bigl((m+1)!\bigr)^2}\quad \forall m\in \N.
 \]

 La section \eqref{partie222} constitue une approche directe
pour définir une torsion analytique
associée $(\overline{T\p^1}_\infty;
\overline{\mathcal{O}(m)}_\infty)$. Rappelons que dans \cite{Mounir1}, nous avons calculé
explicitement le spectre de
$\Delta_{\overline{\mathcal{O}(m)}_\infty}$. Il est donc naturel  d'associer   à ce spectre une
fonction $\zeta_{\Delta_{\overline{\mathcal{O}(m)}_\infty}}$, qu'on appellera la fonction Zêta
associée à l'opérateur singulier $\Delta_{\overline{\mathcal{O}(m)}_\infty}$.  Afin d'étudier
cette fonction Zêta, nous introduisons la notion de familles de fonctions Zêta, voir
\eqref{paragrapheG}, et nous généralisons
la théorie de Voros à cette classe de fonctions Zêta en établissant un résultat  qui généralise ceux de \cite{Voros} et de
\cite{Spreafico}, voir (théorème \eqref{regularisation}). Grâce à la théorie développée dans cette section, nous sommes capables d'établir
que   $\zeta_{\Delta_{\overline{\mathcal{O}(m)}_\infty}}$ possède exactement les mêmes propriétés qu'une fonction Zêta associée
à un opérateur Laplacien vérifiant les hypothèses de \cite{heat}, plus précisément on démontre le résultat suivant (voir théorème
\eqref{lavaleurdezeta}):
\begin{theorem}
Pour tout $m\in \N$, la fonction $\zeta_{\Delta_{\overline{\mathcal{O}(m)}_\infty}}$ converge pour tout $\mathrm{Re}(s)>1$, avec un pôle en $1$ et  admet un prolongement analytique au voisinage de $s=0$, de plus, on a

\[
\zeta_{\Delta_{\overline{\mathcal{O}(m)}_\infty}}(0)=-\frac{2}{3}-\frac{m}{2},
\]
et

\[\zeta_{\Delta_{\overline{\mathcal{O}(m)}_\infty}}'(0)=4\zeta_\Q'(-1)-\frac{1}{6}-\log \frac{\,{}(m+2)^{{m+1}}}{(m+1)!^2}.\]
\end{theorem}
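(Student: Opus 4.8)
The plan is to deduce everything from the explicit spectrum obtained in \cite{Mounir1} by feeding it into the generalized regularization theorem \eqref{regularisation}. First I would recall the explicit list of eigenvalues $\{\al_k\}$ of $\Delta_{\overline{\mathcal{O}(m)}_\infty}$ from \cite{Mounir1} and write down the Dirichlet series $\zeta_{\Delta_{\overline{\mathcal{O}(m)}_\infty}}(s)=\sum_{k\geq 1}\al_k^{-s}$. Since $\p^1$ has real dimension two, the eigenvalue counting function grows linearly, which gives absolute convergence for $\mathrm{Re}(s)>1$ and pins down the unique simple pole at $s=1$; this is the routine part of the statement.

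The decisive step is to recognize $\zeta_{\Delta_{\overline{\mathcal{O}(m)}_\infty}}$ as a \emph{family of zeta functions} in the sense of \eqref{paragrapheG}. Concretely, the spectrum of \cite{Mounir1} is indexed by a pair of integers, so the series splits into a superposition of one-parameter zeta functions attached to arithmetic progressions, each carrying a finite shift depending on $m$. Once this presentation is in place, the generalized Voros--Spreafico theorem \eqref{regularisation} applies directly: it yields the meromorphic continuation of each constituent to all of $\C$, its holomorphy at $s=0$, and---crucially---closed expressions for the value and the derivative at $0$ in terms of the Laurent coefficients $c_{i_n}$ of the associated theta function $\theta(t)=\sum_k e^{-t\al_k}$. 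The entire problem is thereby reduced to computing finitely many such coefficients.

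To extract the stated constants I would compute the relevant small-$t$ expansion of $\theta(t)$ explicitly. The value $\zeta_{\Delta_{\overline{\mathcal{O}(m)}_\infty}}(0)=-\tfrac{2}{3}-\tfrac{m}{2}$ is then read off from the coefficient of $t^0$ after removal of the polar part, the linear dependence $-m/2$ encoding the twist by $\mathcal{O}(m)$. For the derivative, the leading quadratic growth of the eigenvalues together with their linear multiplicities is exactly the configuration that makes $\zeta'(0)$ factor through the derivative of the Riemann zeta at $-1$, producing the term $4\zeta'_\Q(-1)$; the remaining logarithmic contribution $-\log\frac{(m+2)^{m+1}}{((m+1)!)^2}$ arises from the regularized products of the finitely shifted progressions, via the Lerch-type evaluation $\zeta'(0,x)=\log\bigl(\Gamma(x)/\sqrt{2\pi}\bigr)$ of the Hurwitz zeta function at $0$.

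The main obstacle I anticipate is the final bookkeeping: assembling the contributions of all progressions into a single expansion whose coefficients reproduce precisely the stated closed forms, while rigorously justifying the interchange of the (only conditionally convergent) summations with differentiation in $s$ at $s=0$. Because the canonical metric is not $\cl$, the heat-kernel apparatus of \cite{heat} is unavailable and every continuation must be routed through \eqref{regularisation}; verifying the hypotheses of that theorem for the exact spectrum of \cite{Mounir1} is where the substantive work lies. A reassuring consistency check is that the resulting $\zeta'(0)$ must coincide with the generalized analytic torsion $T_g\bigl(\overline{T\p^1}_\infty;\overline{\mathcal{O}(m)}_\infty\bigr)$ computed independently in Section \eqref{tttt}.
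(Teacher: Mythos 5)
Your overall strategy --- feed the explicit spectrum of \cite{Mounir1} into the regularization theorem \eqref{regularisation} --- is indeed the one the paper follows, but as written your proposal has two concrete gaps that would derail the computation.

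First, the spectrum is not a superposition of arithmetic progressions: by Theorem \eqref{spectreOm} the nonzero eigenvalues are the numbers $\la^2/4$ where $\la$ runs over the zeros of the cross-products $L_n(z)=-J_n(z)J_{n-m-1}(z)+J_{n+1}(z)J_{n-m}(z)$, equivalently of $G_n(iz)$ with $G_n=I_{n+1}I_{n-m}+I_nI_{n-m-1}$. These zeros are not explicit, so the ``small-$t$ expansion of $\theta(t)=\sum_k e^{-t\al_k}$'' you propose to compute is not accessible; the paper only proves $\limsup_{t\mapsto 0}t\,\theta(t)<\infty$ in a side remark and never extracts $\zeta(0)$, $\zeta'(0)$ from Laurent coefficients of $\theta$. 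What Theorem \eqref{regularisation} actually requires --- and what constitutes the bulk of the paper's proof --- is: (i) $\sum_k\la_{n,k}^{-2}<\infty$, obtained from an interlacing lemma comparing the zeros of $L_{n+m}$ with those of $J_n$ and $J_{n+m}$; (ii) the lower bound $\la_{n+m,1}\geq n$, proved via the identity for $\int_0^1 xJ_n(ax)^2dx$; and (iii) a uniform Olver-type asymptotic expansion of $G_{n+m}(nz)$ jointly in $n$ and $z$, yielding the data $a_{m,n}=n+\frac{m}{2}$, $b_{m,n}$ and $w_m(z)$ that enter the expression of $\zeta$ near $s=0$. None of this is routine, and none of it appears in your sketch beyond the acknowledgement that it is ``where the substantive work lies''.

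Second, the closed forms do not arise the way you describe. The term $4\zeta'_\Q(-1)$ does not come from ``the leading quadratic growth of the eigenvalues'' but from the regularization of $\sum_{n\geq 1}\log\Gamma(n)/n^{2s}$ hidden in the coefficients $b_{m,n}$, evaluated at $s=0$ via Stirling's formula and the Kinkelin constant $\log A=-\zeta'_\Q(-1)+\frac{1}{12}$. And the term $-\log\frac{(m+2)^{m+1}}{((m+1)!)^2}$ is not a Hurwitz--Lerch contribution of the form $\log\bigl(\Gamma(x)/\sqrt{2\pi}\bigr)$ --- no Hurwitz zeta function occurs anywhere --- but is exactly $-\log h_{L^2,\infty,\infty}$ (see \eqref{notevolume}), produced by the finitely many low modes $G_0,\ldots,G_m$ whose individual zeta functions are handled by Theorem \eqref{repzeta} through their constant coefficients $b_n$. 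Finally, the case $m=0$ requires a separate treatment (zeros of $J_n$ and of $J'_n$, the Dirichlet part being quoted from Spreafico). So while your road map is correct in outline, the proof cannot be completed along the specific lines you indicate.
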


Rappelons que lorsqu'on considère des métriques  $\cl$ sur $T\p^1$ et sur $\mathcal{O}(m)$, alors
par définition:
\[
T\bigl(\overline{T\p^1};\overline{\mathcal{O}(m)}\bigr)=\zeta'_{\Delta_{\overline{\mathcal{O}(m)}}
}(0)\quad \forall \,m\in \N
\]

A priori la théorie classique du noyau de chaleur ne permet pas de déduire une relation entre
la torsion
analytique généralisée $T_g\bigl(\overline{T\p^1}_\infty;\overline{\mathcal{O}(m)}_\infty\bigr)$
et le déterminant régularisé canonique $\zeta'_{\Delta_{\overline{\mathcal{O}(m)}_\infty}}(0) $.
Mais d'après nos calculs, nous avons établit
  que les deux différentes approches   donnent le même résultat, c'est à dire:
\[
T_g\bigl(\overline{T\p^1}_\infty;\overline{\mathcal{O}(m)}_\infty\bigr)=\zeta'_{\Delta_{\overline{
\mathcal{O}(m)}_\infty}}(0)\quad \forall \,m\in \N.
\]
\vspace{1cm}

\noindent\textbf{Remerciements}: Cet article est une partie de ma thèse (voir \cite{these})
 sous la direction de Vincent Maillot. Je le remercie pour
ses conseils et son aide  lors de la préparation de ce travail.

\section{La torsion analytique holomorphe généralisée  sur $\p^1$}\label{tttt}

Le but de ce paragraphe est d'étendre la notion de torsion analytique holomorphe aux fibrés en droites sur $\p^1$ munis d'une métrique admissible.

\subsection{Métriques admissibles}
Soit $X$ une variété complexe analytique et $\overline{L}=(L,\vc)$ un fibré en droites hermitien muni d'une métrique continue sur $L$.
\begin{definition}
On appelle premier courant de Chern de $\overline{L}$ et on note $c_1\bigl( \overline{L}\bigr)\in D^{(1,1)}(X)$ le courant défini localement par l'égalité:
\[
c_1\bigl(\overline{L}\bigr)=dd^c\bigl( -\log \|s\|^2\bigr),
\]
où $s$ est une section holomorphe locale et ne s'annulant pas du fibré  $L$.
\end{definition}

\begin{definition}
La métrique $\vc$ est dite positive si $c_1\bigl(L,\vc\bigr)\geq 0$.
\end{definition}
\begin{definition}
 La métrique $\vc$ est dite admissible s'il existe une famille $\bigl(\vc_n \bigr)_{n\in \N}$ de métriques positives de classe $\cl$  convergeant uniformément vers $\vc$ sur $L$. On appelle fibré admissible sur $X$ un fibré en droites holomorphe muni d'une métrique admissible sur $X$.
\end{definition}
On dira que $\overline{L}$ est un fibré en droites intégrable s'il existe $\overline{L}_1$ et $\overline{L}_2$ admissibles tels que
\[
 \overline{L}=\overline{L}_1\otimes \overline{L}_2^{-1}.
\]

\begin{example}
Soit $n\in \N^\ast$. On note par $\mathcal{O}(1)$ le fibré de Serre sur $\p^n$ et on le munit de la métrique définie pour toute section méromorphe de $\mathcal{O}(1)$ par:
\[
 \|s(x)\|_\infty=\frac{|s(x)|}{\max(|x_0|,\ldots,|x_n|)}.
\]
Cette  métrique est admissible.
\end{example}
En fait, c'est un cas particulier d'un résultat plus général combinant   la construction Batyrev
et Tschinkel  sur une variété torique projective et  la construction de Zhang. Dans la première
construction permet d'associer canoniquement à tout fibré en droites sur une variété torique
projective complexe une métrique continue notée $\vc_{BT}$ et déterminée uniquement par la
combinatoire de la variété, voir \cite[proposition 3.3.1]{Maillot} et \cite[proposition
3.4.1]{Maillot}. L'approche de Zhang est moins directe, elle utilise un endomorphisme équivariant
(correspondant à la multiplication par $p$, un entier supérieur à 2) afin de construire par
récurrence une suite de métriques qui converge uniformément vers une limite notée $\vc_{Zh,p}$ et
qui, en plus, ne dépend pas  du choix de la métrique de départ, voir \cite{Zhang} ainsi que
\cite[théorème 3.3.3]{Maillot}. Mais d'après \cite[théorème 3.3.5]{Maillot} on montre que
\[
 \vc_{BT}=\vc_{Zh,p}.
\]
Que l'on appelle la métrique canonique associée à $L$. Notons que lorsque $L$ n'est pas trivial,
alors cette métrique  n'est pas $\cl$.

\subsection{La métrique de Quillen et la torsion analytique holomorphe, un rappel}
Commençons tout d'abord par faire des rappels sur la métrique de Quillen. Soit $\bigl(X,\omega_X\bigr)$ une variété compacte kählérienne et $\overline{E}$ un fibré vectoriel muni d'une métrique hermitienne $\cl$.  Ces données définissent un produit hermitien $L^2$, sur $A^{(0,q)}\bigl(X,E\bigr)$ pour $q=0,1\ldots,\dim_\C(X)$.\\

Soit $\Delta_q$ l'opérateur Laplacien agissant sur $A^{(0,q)}\bigl(X,E\bigr)$ et $\mathcal{H}^{(0,q)}=\ker \Delta_q$ l'ensemble des formes harmoniques. On note par
\[
 \zeta_q(X,E,s)=\sum_{n\in \N_{\geq 1}}\frac{1}{\la_{q,n} ^{s}}
\]
la fonction Zêta associée au spectre de $\Delta_q$, cette série converge absolument sur $\mathrm{Re}(s)>\dim_\C\bigl( X\bigr)$ et admet un prolongement méromorphe sur $\C$ entier et  son prolongement  est holomorphe en $s=0$. D'après Ray et Singer \cite{RaySinger} on définit la torsion analytique comme étant le réel:
\[
T\Bigl( \bigl(X,\omega_X\bigr); \overline{E}\Bigr)=\sum_{q\in \N}(-1)^{q+1}\,q\,\zeta'_q(X,E,0).
\]
On considère l'espace vectoriel complexe de dimension $1$ suivant:
\[
 \la\bigl(E\bigr)=\otimes_{q\geq 0} \det\bigl(H^q(X,E) \bigr)^{(-1)^q},
\]
où $H^q(X,E)$ est le $q$-ème groupe de cohomologie de $X$ à coefficients dans $E$. Comme
$H^q(X,E)$ est canoniquement  isomorphe à $\ker\bigl(\Delta_q\bigr)$, le produit hermitien
$L^2$ induit une métrique $h_{L^2}$ sur $\la\bigl(E\bigr)$. Dans \cite{Quillen}, Quillen  définit
la métrique $h_Q$ sur $\la\bigl(E\bigr)$  par la formule:
\[
 h_Q=h_{L^2}\exp\Bigl( T\Bigl( \bigl(X,\omega_X\bigr); \overline{E}\Bigr)\Bigr).
\]

Le résultat suivant permet d'étendre, par approximation, la notion de torsion analytique holomorphe aux fibrés en
droites intégrables sur $\p^1$.

\begin{theorem}\label{torsiongeneralisée}
On munit $T\p^1$ d'une métrique intégrable, qu'on note par $h_{\infty,\p^1}$. Soit $\overline{L}_\infty:=\bigl(L,h_{\infty,L} \bigr)$ un fibré en droites admissible sur $\p^1$.\\

On considère $\bigl( h_{n,L}\bigr)_{n\in\N}$  et $\bigl(h_{n,\p^1} \bigr)_{n\in \N}$, deux  suites de métriques  $L$ et $T\p^1$ respectivement vérifiant:

\begin{enumerate}
 \item
Il existe $H_1$ et $H_2$ deux fibrés en droites sur $\p^1$ et deux suites de métriques $\bigl(h_{n,1} \bigr)$ et $\bigl(h_{n,2} \bigr)$ de métriques positives de classe $\cl$ qui convergent uniformément  vers deux métriques $h_{\infty,1,\p^1}$ et $h_{\infty,2,\p^1}$ respectivement sur $H_1$ et $H_2$ telles que

\[
 T\p^1=H_1\otimes H_2^{-1},
\]

\[h_{\infty,\p^1}=h_{\infty,1}\otimes h_{\infty, 2}^{-1},\]
et
 \[
 h_{n,\p^1}=h_{n,1}\otimes h_{n,2}^{-1}\quad \forall \, n\in \N.
\]
\item
$\bigl(h_{n,L}\bigr)_{n\in \N}$ est une suite de métriques positives de classe $\cl$ sur $L$ qui converge  uniformément vers $h_{\infty,L}$.
\end{enumerate}
On note pour tout $n\in \N$, $n'\in \N$:
\[
 \Bigl(\bigl(T\p^1,h_{n,T\p^1}\bigr);\bigl(L,h_{n',L}\bigr) \Bigr)
\]
 la torsion analytique holomorphe de Ray-Singer associée à
$\Bigl(\bigl(T\p^1,h_{n,T\p^1}\bigr);\bigl(L,h_{n',L}\bigr) \Bigr)$.\\
 Alors la suite double\footnote{Dans ce texte, on fera la convention suivante: On dira
qu'une suite double $(x_{n,n'})_{n\in \N,n'\in\N}$ converge vers une limite $l$, si
pour tout $\eps>0$, il existe $A\in \R$ tel que $|x_{n,n'}-l|<\eps$ pour  $n,n'>A$.}suivante:
\[
\biggl(T\Bigl(\bigl(T\p^1,h_{n,T\p^1}\bigr);\bigl(L,h_{n',L}\bigr) \Bigr)\biggr)_{n\in \N,\,n'\in
\N}
\]
converge vers une limite finie qui ne dépend pas du choix des suites ci-dessus.
\end{theorem}

\begin{proof}
On considère $\bigl(h_{n,\p^1}\bigr)_{n\in \N}$ et $\bigl(h_{n,L}\bigr)_{n\in\N} $ comme dans le
théorème. \\

On fixe la notation suivante:
\[
 h_{Q,\overline{T\p^1},\overline{L}},
\]
la métrique de Quillen associée à la donnée $\Bigl(\overline{T\p^1};\overline{L} \Bigr)$ où toutes les métriques sont $\cl$.\\

Par les formules des anomalies \cite[théorèmes 0.1, 0.2]{BGS1} on a pour tout $k,k'\in \N$ et $j,j'\in \N$:
\[
\log h_{Q,{\footnotesize \overline{T\p^1}_k; \overline{L}_j}}-\log h_{Q,{\footnotesize \overline{T\p^1}_k; \overline{L}_{j'}}}=
\int_{\p^1}\widetilde{ch}\bigl(L,h_j,h_{j'}\bigr)\mathrm{Td}(\overline{T\p^1}_k)
\]
et
\[
\log h_{Q,{\footnotesize \overline{T\p^1}_{k}; \overline{L}_j}}-\log h_{Q,{\footnotesize \overline{T\p^1}_{k'}; \overline{L}_{j}}}=\int_{\p^1}ch(\overline{L}_j)\widetilde{\mathrm{Td}}\bigl(T\p^1,h_k,h_{k'}\bigr).
\]

On montre que, voir \eqref{ch}:
\[
\widetilde{ch}\bigl(L,h_{j},h_{j'}\bigr)=-\log\biggl(\frac{h_j}{h_{j'}}\biggr)-\frac{1}{2}\log\biggl(\frac{h_{j}}{h_{j'}}\biggr)\Bigl(c_1(\overline{L}_j)+c_1(\overline{L}_{j'}) \Bigr),
\]
et
\[
\widetilde{\mathrm{Td}}\bigl(T\p^1,h_{k},h_{k'}\bigr)=-\frac{1}{2}\log\biggl( \frac{h_{k}}{h_{k'}}\biggr)-\frac{1}{12}\log\biggl(\frac{h_{k}}{h_{k'}}\biggr)\Bigl(c_1(\overline{T\p^1}_k)+c_1(\overline{T\p^1}_{k'}) \Bigr).
\]
modulo $\mathrm{Im}\, \overline{\partial}+\mathrm{Im}\,\partial$.\\

Comme
\[
 \overline{T\p^1}_k=\overline{H_1}_k\otimes \overline{H_2}_k^{-1}\quad \forall \, k\in \N
\]
et que $\overline{H_1}_k=(H_1,h_{k,1})$  et $\overline{H_2}_k=(H_2,h_{k,2})$ sont positifs. Par les formules précédentes  on peut écrire que:
\begin{align*}
\biggl|\log h_{Q,{\footnotesize \overline{T\p^1}_k; \overline{L}_j}}-\log h_{Q,{\footnotesize
\overline{T\p^1}_k; \overline{L}_{j'}}} \biggr|&=\biggl| \int_{\p^1}\Bigl(-\log \frac{h_j}{h_{j'}}-\frac{1}{2}\log
\frac{h_{j}}{h_{j'}}\bigl(c_1(\overline{L}_j)+c_1(\overline{L}_{j'}) \bigr)  \Bigr)Td(\overline{H_1}_k\otimes \overline{H_2}_k^{-1} )  \biggr|\\
&=\biggl| \int_{\p^1}-\frac{1}{2}\log \frac{h_j}{h_{j'}}\Bigl(c_1(\overline{H_1}_k)-c_1(\overline{H_2}_k)
\Bigr)-\frac{1}{2}\int_{\p^1}\log
\frac{h_{j}}{h_{j'}}\bigl(c_1(\overline{L}_j)+c_1(\overline{L}_{j'}) \bigr)  \Bigr)\biggl|\\
&\leq\frac{1}{2} \biggl\| \log\biggl(\frac{h_{j,L}}{h_{j',L}}\biggr)
\biggr\|_{\sup}\int_{\p^1}\Bigl(c_1(\overline{H_1}_k)+c_1(\overline{H_2}_k)+c_1(\overline{L}_j)+c_1(\overline{L}_{j'})\Bigr)\\
&=\frac{1}{2} \biggl\| \log\biggl(\frac{h_{j,L}}{h_{j',L}}\biggr)
\biggr\|_{\sup}\int_{\p^1}\Bigl(c_1(H_1)+c_1(H_2)+2c_1(L)\Bigr).
\end{align*}

 Donc, on a  trouvé une constante $M$ qui ne dépend pas de $k$ telle que
\[
\biggl|\log h_{Q,{\footnotesize \overline{T\p^1}_k; \overline{L}_j}}-\log h_{Q,{\footnotesize
\overline{T\p^1}_k; \overline{L}_{j'}}} \biggr|\leq M \biggl\| \log\biggl(\frac{h_{j,L}}{h_{j',L}}\biggr)
\biggr\|_{\sup}\quad \forall \, j,j'\in \N.
\]

On a aussi,
{\allowdisplaybreaks
\begin{align*}
\biggl|\log h_{Q,{\footnotesize \overline{T\p^1}_{k}; \overline{L}_j}}-\log h_{Q,{\footnotesize \overline{T\p^1}_{k'}; \overline{L}_{j}}}\biggr|&=\biggl|\int_{\p^1} ch(\overline{L}_j)\Bigl(-\frac{1}{2}\log \frac{h_{k}}{h_{k'}}-\frac{1}{12}\log \frac{h_{k}}{h_{k'}}\Bigl(c_1(\overline{T\p^1}_k)+c_1(\overline{T\p^1}_{k'}) \Bigr)\Bigr)\biggr|\\
&=\biggl|\frac{1}{2}\int_{\p^1}\log \frac{h_{k}}{h_{k'}} c_1(\overline{L}_j)-\frac{1}{12}\int_{\p^1}\log \frac{h_{k}}{h_{k'}}\Bigl(c_1(\overline{H_1}_k)+c_1(\overline{H_1}_{k'})-c_1(\overline{H_2}_k)+c_1(\overline{H_2}_{k'} \Bigr)\Bigr)\biggr|\\
&\leq \frac{1}{2}\biggl\|\log\biggl(\frac{h_{k,\p^1}}{h_{k',\p^1}}\biggr)
\biggr\|_{\sup}\int_{\p^1}c_1(\overline{L}_j)\\
&+\frac{1}{12}\biggl\|\log\biggl(\frac{h_{k,\p^1}}{h_{k',\p^1}}\biggr)
\biggr\|_{\sup}\int_{\p^1}\Bigl(c_1(\overline{H_1}_k)+c_1(\overline{H_1}_{k'})+c_1(\overline{H_2}_k)+c_1(\overline{H_2}_{k'})\Bigr)\\
&=\frac{1}{2}\biggl\|\log\biggl(\frac{h_{k,\p^1}}{h_{k',\p^1}}\biggr)
\biggr\|_{\sup}\int_{\p^1}c_1(L)+\frac{1}{6}\biggl\|\log\biggl(\frac{h_{k,\p^1}}{h_{k',\p^1}}\biggr)
\biggr\|_{\sup}\int_{\p^1}\Bigl(c_1(H_1)+c_1(H_2)\Bigr)\\
&\leq \biggl\|\log\biggl(\frac{h_{k,\p^1}}{h_{k',\p^1}}\biggr)
\biggr\|_{\sup}\int_{\p^1}\Bigl(c_1(L)+c_1(H_1)+c_1(H_2)\Bigr)\\
&\leq \biggl\|\log\biggl(\frac{h_{k,1}}{h_{k',1}}\biggr)
\biggr\|_{\sup}\biggl\|\log\biggl(\frac{h_{k,2}}{h_{k',2}}\biggr)
\biggr\|_{\sup}\int_{\p^1}\Bigl(c_1(L)+c_1(H_1)+c_1(H_2)\Bigr)
\end{align*}
}

On a trouvé $M'$, une constante qui ne dépend pas de $j$ telle que
\[
\biggl|\log h_{Q,{\footnotesize \overline{T\p^1}_{k}; \overline{L}_j}}-\log h_{Q,{\footnotesize \overline{T\p^1}_{k'}; \overline{L}_{j}}}\biggr|\leq M'\biggl\|\log\biggl(\frac{h_{k,\p^1}}{h_{k',\p^1}}\biggr)
\biggr\|_{\sup}\leq M' \biggl\|\log\biggl(\frac{h_{k,1}}{h_{k',1}}\biggr)
\biggr\|_{\sup}\biggl\|\log\biggl(\frac{h_{k,2}}{h_{k',2}}\biggr)
\biggr\|_{\sup}\quad \forall \, k,k'\in \N.
\]

Par suite, il existe $M''$ une constante réelle telle que
\[
\biggl|\log h_{Q,{\footnotesize \overline{T\p^1}_k; \overline{L}_j}}-\log h_{Q,{\footnotesize
\overline{T\p^1}_{k'}; \overline{L}_{j'}}} \biggr|\leq M'' \biggl(\biggl\| \log\biggl(\frac{h_{j,L}}{h_{j',L}}\biggr)
\biggr\|_{\sup} +\biggl\|\log\biggl(\frac{h_{k,1}}{h_{k',1}}\biggr)
\biggr\|_{\sup}\biggl\|\log\biggl(\frac{h_{k,2}}{h_{k',2}}\biggr)\biggr\| \biggr)\quad \forall \, j,j',k,k'\in \N.
\]
Puisque, par hypothèse, $(h_{j,L})_{j\in \N}$ (resp. $(h_{k,1})_{k\in \N}$, resp. $(h_{k,2})_{k\in \N}$) converge uniformément  vers
$h_{\infty,L}$ (resp. vers $h_{\infty,1}$, resp. $h_{\infty,2}$), alors la suite double,
\[
\Bigl(\log h_{Q,{\footnotesize \overline{T\p^1}_k; \overline{L}_j}}\Bigr)_{j,k\in\N},
\]
converge vers une limite finie lorsque $k\mapsto \infty$ et $j\mapsto \infty$, qui ne dépend pas du choix des suites.\\

Comme $\overline{L}_\infty$ est un fibré en droites admissible sur $\p^1$, il existe $m\in \N$ tel que $L=\mathcal{O}(m)$. Donc pour tout $k\in\N$ et $j\in \N$
\[
h_{L^2,\overline{T\p^1}_k,\overline{L}_j}=\det\bigl((z^l,z^{l'})_{L^2,k,j} \bigr)_{0\leq l,l'\leq m},
\]
où $(\cdot,\cdot)_{L^2,k,j}$ est la métrique $L^2$ induite par $\overline{T\p^1}_k$ et $\overline{L}_j$ et les $1,z^1,\ldots,z^m$ sont les sections globales de $L$.\\

Pour tous $l,l'=0,1,\ldots, m$, la suite double
\[
\bigl(z^l,z^{l'}\bigr)_{L^2,k,j}=\int_{\p^1}h_{j,L}(z^l,z^{l'})\omega_{k,\p^1},
\]
converge vers $(z^l,z^{l'})_{L^2,\infty,\infty}=\int_{\p^1}h_{\infty,L}(z^l,z^{l'})\omega_{\infty,\p^1}$, où  $(\cdot,\cdot)_{L^2,\infty,\infty} $ désigne le produit scalaire  $L^2$ induit par $h_{\infty,\p^1}$ et $h_{\infty,L}$. Par conséquent la suite de matrices
\[
\Bigl(\bigl((z^l,z^{l'})_{L^2,k,j} \bigr)_{0\leq l,l'\leq m}\Bigr)_{k,j\in \N}
\]
converge vers $\bigl((z^l,z^{l'})_{L^2,\infty,\infty} \bigr)_{0\leq l,l'\leq m}
$ pour une norme matricielle arbitraire. Par des arguments de  l'algèbre linéaire, on conclut que
\[
\Bigl(\det\bigl((z^l,z^{l'})_{L^2,k,j} \bigr)_{0\leq l,l'\leq m}\Bigr)_{k,j\in \N}\xrightarrow[k,j\mapsto \infty]{}\det \bigl((z^l,z^{l'})_{L^2,\infty,\infty} \bigr)_{0\leq l,l'\leq m},
\]
c'est à dire
\[
\Bigl(h_{L^2,\overline{T\p^1}_k,\overline{L}_j}\Bigr)_{k,j\in \N}\xrightarrow[k,j\mapsto \infty]{} h_{L^2,\overline{T\p^1}_\infty,\overline{L}_\infty}.
\]
On a donc montré que la suite double suivante:

\[
\biggl(T\Bigl(\bigl(T\p^1,h_{k,T\p^1}\bigr);\bigl(L,h_{j,L}\bigr) \Bigr)\biggr)_{k\in \N,\,j\in
\N},
\]
converge vers une limite finie.
\end{proof}

\begin{definition}
On note cette limite par \[T_g\Bigl((T\p^1,h_{\infty,\p^1});(L,h_{\infty,L}) \Bigr),\]  et  on l'appelle la torsion analytique généralisée. On définit aussi \[h_{Q,g}:=h_{L^2,g}\exp\Bigl(T_g((T\p^1,h_{\infty,\p^1});(L,h_{\infty,L}) \Bigr)\] qu'on appelle la métrique de Quillen généralisée.\\
\end{definition}

On note par $[x_0:x_1]$ les coordonnées homogènes de $\p^1$ avec $z=\frac{x_1}{x_0}$  lorsque $x_0\neq 0$. On munit $\p^1$ de la métrique singulière suivante:
\[
\omega_\infty=\frac{1}{2\pi i} \frac{dz\wedge d\z}{\max(1,|z|^4)},\]
et pour tout $m\geq 0$, le fibré en droites $\mathcal{O}(m)$ sera muni de sa métrique canonique:
\[
h_\infty(s,s)(z)=\frac{|s(z)|^2}{\max(1,|z| )^{2m}},
\]
où $s$ est une section holomorphe locale de $\mathcal{O}(m)$. On montre que ces deux métriques sont admissibles, voir par exemple \cite{Zhang}.\\

On note par $h_{Q,{\footnotesize ( \overline{T\p^1}_\infty; \overline{\mathcal{O}(m)}_\infty)}}$ la métrique de Quillen généralisée associée à $( \overline{T\p^1}_\infty; \overline{\mathcal{O}(m)}_\infty)$. On a
\begin{proposition}
On a  pour tout $m \in \N$,
\[
h_{Q,{\footnotesize ( \overline{T\p^1}_\infty; \overline{\mathcal{O}(m)}_\infty)}}=\exp\bigl(4\zeta'_\Q(-1)-\frac{1}{6}\bigr),
\]
et
\[
T_g\bigl(\overline{T\p^1}_\infty;\overline{\mathcal{O}(m)}_\infty\bigr)=4\zeta'_\Q(-1)-\frac{1}{6}-\log \frac{\,{}(m+2)^{{m+1}}}{((m+1)!)^2}.
\]
\end{proposition}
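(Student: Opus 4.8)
Le point de d\'epart est la d\'efinition $h_{Q,g}=h_{L^2,g}\exp(T_g)$ et le fait, \'etabli dans la preuve du th\'eor\`eme d'existence de la torsion analytique g\'en\'eralis\'ee, que la m\'etrique de Quillen g\'en\'eralis\'ee $h_{Q,g}$ et la m\'etrique $L^2$ g\'en\'eralis\'ee $h_{L^2,g}$ sont les limites de leurs analogues lisses le long de n'importe quelles suites approximantes admissibles. Je calculerais donc s\'epar\'ement les deux facteurs $h_{L^2,g}$ et $h_{Q,g}$, puis j'en d\'eduirais $T_g$ par la relation $T_g=\log h_{Q,g}-\log h_{L^2,g}$.

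Le calcul de $h_{L^2,g}$ est la partie routini\`ere. Les sections globales de $\mathcal{O}(m)$ \'etant $1,z,\dots,z^m$, la m\'etrique $L^2$ g\'en\'eralis\'ee est le d\'eterminant de la matrice de Gram $\bigl((z^l,z^{l'})_{L^2,\infty,\infty}\bigr)_{0\le l,l'\le m}$, le produit scalaire \'etant pris relativement aux m\'etriques canoniques $h_\infty$ et $\omega_\infty$. Comme les poids canoniques $\max(1,|z|)^{2m}$ et $\max(1,|z|^4)$ sont invariants par rotation, les termes hors-diagonale s'annulent et la matrice est diagonale. En passant en coordonn\'ees polaires et en coupant l'int\'egrale radiale en $|z|=1$, on trouve $(z^l,z^l)_{L^2,\infty,\infty}=\frac{1}{l+1}+\frac{1}{m+1-l}=\frac{m+2}{(l+1)(m+1-l)}$; le produit sur $l=0,\dots,m$ donne alors
\[
h_{L^2,g}=\prod_{l=0}^m\frac{m+2}{(l+1)(m+1-l)}=\frac{(m+2)^{m+1}}{\bigl((m+1)!\bigr)^2}.
\]

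La partie d\'elicate est la valeur de $h_{Q,g}$. Ici j'\'ecrirais $h_{Q,g}=\lim_{k,j}h_{Q,\overline{T\p^1}_k;\overline{\mathcal{O}(m)}_j}$ et j'ancrerais les m\'etriques de Quillen lisses sur la situation de Fubini--Study, pour laquelle Gillet, Soul\'e et Zagier \cite{GSZ} calculent explicitement la torsion analytique \`a partir du spectre d'Ikeda--Taniguchi. Concr\`etement, en choisissant des approximations lisses positives commodes des m\'etriques canoniques (par exemple des r\'egularisations de $\log\max(1,|z|^2)$), j'utiliserais les formules des anomalies d\'ej\`a \'etablies dans la preuve (voir \eqref{ch}) pour \'ecrire $\log h_{Q,\overline{T\p^1}_k;\overline{\mathcal{O}(m)}_j}$ comme la valeur de Fubini--Study augment\'ee des int\'egrales de classes secondaires $\int_{\p^1}\widetilde{ch}(\mathcal{O}(m),\cdot,\cdot)\,\mathrm{Td}$ et $\int_{\p^1}ch(\mathcal{O}(m))\,\widetilde{\mathrm{Td}}$. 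En passant \`a la limite --- dont la convergence est pr\'ecis\'ement ce qui a \'et\'e d\'emontr\'e pour la suite double des m\'etriques de Quillen --- et en \'evaluant les int\'egrales secondaires limites contre les courants canoniques, la d\'ependance en $m$ doit dispara\^{\i}tre et laisser la valeur universelle $\exp\bigl(4\zeta'_\Q(-1)-\frac16\bigr)$.

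Enfin, en combinant les deux calculs via $T_g=\log h_{Q,g}-\log h_{L^2,g}$, on obtient la formule annonc\'ee pour la torsion g\'en\'eralis\'ee. Le principal obstacle sera la deuxi\`eme \'etape: importer la valeur explicite de la torsion (dans l'esprit de \cite{GSZ}) pour le mod\`ele lisse et, surtout, contr\^oler la limite des int\'egrales de Bott--Chern lorsque les m\'etriques lisses d\'eg\'en\`erent vers les m\'etriques canoniques singuli\`eres, en v\'erifiant en particulier que les contributions d'anomalie d\'ependant de $m$ se compensent, de sorte que $h_{Q,g}$ soit r\'eellement ind\'ependante de $m$.
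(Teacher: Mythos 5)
Votre proposition suit essentiellement la m\^eme d\'emarche que l'article: calcul direct de $h_{L^2,\infty,\infty}=\frac{(m+2)^{m+1}}{((m+1)!)^2}$ via la matrice de Gram diagonale, puis d\'etermination de $h_{Q,g}$ en se ramenant au cas Fubini--Study par les formules des anomalies et le calcul de Gillet--Soul\'e--Zagier, la d\'ependance en $m$ des classes secondaires se compensant exactement avec celle de $-\log h_{Q,FS;FS}=\frac{m^2}{2}+m+\frac{1}{2}-4\zeta'_\Q(-1)$. Le seul ingr\'edient que vous laissez implicite est le passage de la valeur de \cite{GSZ} (cas $m=0$) \`a $\overline{\mathcal{O}(m)}_{FS}$ pour $m$ quelconque, que l'article obtient par r\'ecurrence via la formule des immersions de Bismut--Lebeau appliqu\'ee \`a la suite exacte $0\to \mathcal{O}_{\p^N}(m)\to\mathcal{O}_{\p^N}(m+1)\to i_\ast\mathcal{O}_{\p^{N-1}}(m)\to 0$.
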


\begin{proof}

On considère sur $\p^1$ les deux métriques suivantes:
\[
\omega_{FS}=\frac{i}{2\pi}\frac{dz\wedge d\overline{z}}{(1+|z|^2)^2},\quad \text{et}\quad \omega_\infty =\frac{i}{2\pi}\frac{dz\wedge d\overline{z}}{\max(1,|z|^2)^2}.
\]

Par  \eqref{torsiongeneralisée} et les formules des anomalies \cite[théorèmes 0.1, 0.2]{BGS1} on peut écrire que

\[
\log h_{Q,{\footnotesize \overline{T\p^1}_{FS}; \overline{\mathcal{O}(m)}_\infty}}-\log h_{Q,{\footnotesize \overline{T\p^1}_{FS}; \overline{\mathcal{O}(m)}_{FS}}}=
\int_{\p^1}\widetilde{ch}(\mathcal{O}(m),h_{FS}^{\otimes m},h_{\infty}^{\otimes m})\mathrm{Td}(\overline{T\p^1}_{FS})
\]
et
\[
\log h_{Q,{\footnotesize \overline{T\p^1}_{\infty}; \overline{\mathcal{O}(m)}_\infty}}-\log h_{Q,{\footnotesize \overline{T\p^1}_{FS}; \overline{\mathcal{O}(m)}_{\infty}}}=\int_{\p^1}ch(\overline{\mathcal{O}(m)})\widetilde{\mathrm{Td}}(T\p^1,h_{FS},h_{\infty})
\]

On a
\[
\widetilde{ch}\bigl(\mathcal{O}(m),h_{FS}^{\otimes m},h_{\infty}^{\otimes m}\bigr)=-m\log\biggl(\frac{h_{FS}}{h_{\infty}}\biggr)-\frac{m^2}{2}\log\biggl(\frac{h_{FS}}{h_{\infty}}\biggr)\Bigl(c_1(\overline{\mathcal{O}(1)}_\infty)+c_1(\overline{\mathcal{O}(1)}_{FS}) \Bigr),
\]
et
\[
\widetilde{\mathrm{Td}}\bigl(T\p^1,h_{FS}^2,h_{\infty}^2\bigr)=-\frac{m}{2}\log \biggl(\frac{h_{FS}^2}{h_{\infty}^2}\biggr)-\frac{1}{12}\log\biggl(\frac{h_{FS}^2}{h_{\infty}^2}\biggr)\Bigl(c_1(\overline{\mathcal{O}(2)}_\infty)+c_1(\overline{\mathcal{O}(2)}_{FS}) \Bigr).
\]
ce sont deux formes différentielles généralisées au sens de \cite[§ 4.3]{Maillot},  modulo $\mathrm{Im} \overline{\partial}+\mathrm{Im}\partial$.\\

Rappelons le résultat suivant:
\begin{proposition}
Soit $\overline{L}$ un fibré en droites sur $X$ une variété torique lisse de dimension $d$ muni de sa métrique canonique; pour tout fonction $f$ de classe $\mathcal{C}^\infty$ sur $X$, on a:
\[
 \int_Xf c_1(\overline{L})^d=\mathrm{deg}(L)\int_{S_N^+}fd\mu^+
\]
\end{proposition}
\begin{proof}
 C'est un cas particulier du \cite[Corollaire 6.3.5]{Maillot}.
\end{proof}

Par la précédente proposition,
\begin{equation}\label{intinfty}
\begin{split}
-\int_{\p^1}\log\biggl(\frac{h^2_{FS}}{h^2_\infty}\biggr)c_1(\overline{\mathcal{O}(2)}_\infty)&=\frac{\mathrm{deg}(\mathcal{O}(2))}{2\pi}\int_{0}^{2\pi}\log\biggl(\frac{h_\infty}{h_{FS}}\biggr)(e^{i\theta})d\theta \\
&=\frac{\mathrm{2}}{2\pi}\int_{0}^{2\pi}\log\Bigl(\frac{(1+|e^{i\theta}|^2)^2}{\max(1,|e^{i\theta}|^2)}\Bigr)d\theta \\
&=2\log 4.
\end{split}
\end{equation}

On a,
\begin{equation}\label{intFS}
 \begin{split}
-\int_{\p^1}\log\biggl(\frac{h^2_{FS}}{h^2_\infty}\biggr)c_1(\overline{\mathcal{O}(2)}_{FS})&=\frac{4i}{2\pi}\int_{\C}\log\Bigl(\frac{(1+|z|^2}{\max(1,|z|^2)}\Bigr)\frac{dz\wedge d\overline{z}}{(1+|z|^2)^2}\\
&=4\int_{0}^{+\infty}\log\Bigl(\frac{1+r^2}{\max(1,r^2)} \Bigr)\frac{rdr}{(1+r^2)^2}\\
&=4 \int_{0}^{+\infty}\log\Bigl(\frac{1+u}{\max(1,u)} \Bigr)\frac{du}{(1+u)^2}\\
&=4(1-\log 2).
 \end{split}
\end{equation}

Si l'on note par $[\cdots]^{(0)}$ la partie de degré $0$ dans $\oplus_{k\geq 0}\widetilde{\mathcal{D}^{k,k}}(\p^1)$: l'anneau gradué  des courants de degré $(k,k)$ modulo $Im\,\pt+Im\,\overline{\pt}$, alors
\[
 \begin{split}
\biggl[\int_{\p^1}\widetilde{ch}(\mathcal{O}(m),&h_{FS}^{\otimes m},h_{\infty}^{\otimes m})\mathrm{Td}(\overline{T\p^1}_{FS})\biggr]^{(0)}=\biggl[\int_{\p^1}\widetilde{ch}(\mathcal{O}(m),h_{FS}^{\otimes m},h_{\infty}^{\otimes m})(1+c_1(\overline{\mathcal{O}(1)}_{FS}))\biggr]^{(0)} \\
&=\biggl[\int_{\p^1}\widetilde{ch}(\mathcal{O}(m),h_{FS}^{\otimes m}, h_\infty^{\otimes m})\biggr]^{(0)}+m\int_{\p^1}\biggl(-\log \frac{h_{FS}}{h_\infty} \biggr)c_1(\overline{\mathcal{O}(1)}_{FS})\\
&=-\frac{m^2}{2}\int_{\p^1}\log\frac{h_{FS}}{h_{\infty}}\Bigl(c_1(\overline{\mathcal{O}(1)}_\infty)+c_1(\overline{\mathcal{O}(1)}_{FS}) \Bigr)+m\int_{\p^1}\Bigl(-\log \frac{h_{FS}}{h_\infty} \Bigr)c_1(\overline{\mathcal{O}(1)}_{FS})\\
&=\frac{m^2}{2}\bigl(\log 2+(1-\log 2) \bigr)+m(1-\log2) \quad \text{par}\, \eqref{intinfty} \, \text{et}\, \eqref{intFS}\\
&=\frac{m^2}{2}+(1-\log 2)m,
 \end{split}
\]

et

\[
 \begin{split}
\biggl[\int_{\p^1}ch(\overline{\mathcal{O}(m)}_\infty)&\widetilde{\mathrm{Td}}(T\p^1,h_{FS},h_{\infty})\biggr]^{(0)}=
\biggl[\int_{\p^1}\Bigl(1+m c_1(\overline{\mathcal{O}(1)}_\infty)\Bigr)\widetilde{\mathrm{Td}}\bigl(T\p^1,h_{FS},h_{\infty}\bigr)\biggr]^{(0)}\\
&=\biggl[\int_{\p^1}\widetilde{\mathrm{Td}}(T\p^1,h_{FS},h_{\infty})\biggr]^{(0)}-\frac{1}{2}m\int_{\p^1}\Bigl(\log\frac{h_{FS}}{h_{\infty}}\Bigr)c_1(\overline{\mathcal{O}(1)}_\infty)\\
&=-\frac{1}{12}\int_{\p^1}\log\frac{h_{FS}}{h_{\infty}}\Bigl(c_1(\overline{\mathcal{O}(1)}_\infty+c_1(\overline{\mathcal{O}(1)}_{FS}) \Bigr)-\frac{1}{2}m\int_{\p^1}\Bigl(\log\frac{h_{FS}}{h_{\infty}}\Bigr)c_1(\overline{\mathcal{O}(1)}_\infty)\\
&=\frac{1}{3}+m\log 2\quad \text{par}\; \eqref{intinfty}\; \text{et}\; \eqref{intFS}.
 \end{split}
\]

En regroupant tout cela, on obtient
\begin{equation}\label{diffFSCAN}
\log h_{Q,{\footnotesize \overline{T\p^1}_{\infty}; \overline{\mathcal{O}(m)}_\infty}}-\log h_{Q,{\footnotesize \overline{T\p^1}_{FS}; \overline{\mathcal{O}(m)}_{\infty}}}=\Bigl(\frac{m^2}{2}+(1-\log2)m\Bigr)+\Bigl( \frac{1}{3}+m\log 2\Bigr) =\frac{m^2}{2}+m+ \frac{1}{3}.
\end{equation}

Rappelons que pour  calculer $h_{Q,{\footnotesize \overline{T\p^1}_{FS}; \overline{\mathcal{O}(m)}_{FS}}}
$; la métrique de Quillen associée à l'espace
projectif $\p^N$ muni de la métrique de Fubini-Study et $\overline{\mathcal{O}(m)}_{FS}$, l'idée consiste à utiliser $T(
\overline{T\p^N}_{FS}; \overline{\mathcal{O}}_{0})$ qui est déterminé dans \cite{GSZ}, et d'appliquer la formule des immersions de Bismut-Lebeau, (voir \cite{ImmerBismut}) appliquée à l'inclusion naturelle: $i:\p^{N-1}\lra \p^N$ et à la suite exacte suivante:
\[
0\lra \mathcal{O}_{\p^N}(m)\lra \mathcal{O}_{\p^N}(m+1)\lra i_\ast \mathcal{O}_{\p^N}(m)\lra 0.
\]
Par récurrence, on établit que:
\[
\begin{split}
-\log h_{Q,{\footnotesize \overline{T\p^1}_{FS}; \overline{\mathcal{O}(m)}_{FS}}}=\frac{m^2}{2}+m+\frac{1}{2}-4\zeta'_\Q(-1)\quad
\forall \, m\in \N.
\end{split}
\]

 %$-\log h_{Q,{\footnotesize \overline{T\p^1}_{FS}; \overline{\mathcal{O}(m)}_{FS}}}
 %$ en utilisant le théorème de Riemann-Roch arithmétique, cf. \cite{ARR}. On a
%\[
%-\log h_{Q,{\footnotesize \overline{T\p^1}_{FS}; \overline{\mathcal{O}(m)}_{FS}}}=\pi_\ast\bigl(\widehat{ch}(\overline{\mathcal{O}(m)}_{FS})\widehat{\mathrm{Td}}(\overline{T\p^1}_{FS})\bigr)-a\bigl(\pi_\ast(ch(\mathcal{O}(m))\mathrm{Td}(T\p^1)R(T\p^1))\bigr)
%\]
%avec $\pi:\p^1_{\Z}\lra \mathrm{Spec}(\Z)$ et $\pi_\ast:\widehat{CH}^2(\p^1_\Z)\mapsto \R$.\\

%Si note par $\widehat{x}:=c_1(\overline{\mathcal{O}(1)}_{FS})$, alors
%\[
%\begin{split}
%-\log h_{Q,{\footnotesize \overline{T\p^1}_{FS}; \overline{\mathcal{O}(m)}_{FS}}}&= \pi_{\ast}\bigl((1+m\widehat{x}+\frac{m^2}{2}\widehat{x}^2)(1+\widehat{x}+\frac{1}{3}\widehat{x}^2) \bigr)-(4\zeta'_\Q(-1)-\frac{1}{6})\\
%&=\frac{m^2}{2}+m+\frac{1}{2}-4\zeta'_\Q(-1).
%\end{split}
%\]
%avec $\pi_\ast(\widehat{x}^2)=1$, voir \cite[p. 212]{Character2}.\\

 Donc, en utilisant \eqref{diffFSCAN}, on obtient:
 \[
\begin{split}
-\log h_{Q,{\footnotesize \overline{T\p^1}_{\infty}; \overline{\mathcal{O}(m)}_{\infty}}}&=-\Bigl(\frac{m^2}{2}+m+ \frac{1}{3}\Bigr)+\frac{m^2}{2}+m+\frac{1}{2}-4\zeta'_\Q(-1)\\
&=\frac{1}{6}-4\zeta'_\Q(-1).
\end{split}
\]

Si l'on note par $h_{L^2,\infty, \infty}$ le $L^2$-norme naturelle sur $\la(\mathcal{O}(m))$  définie par les métriques canoniques de $T\p^1$ et $\mathcal{O}(m)$, on a
\begin{equation}\label{volumenotation}
h_{L^2,\infty, \infty}=\prod_{k=0}^m \bigl(z^k,z^k\bigr)_{\infty,\infty},
\end{equation}
où $\bigl(z^k,z^k\bigr)_{\infty,\infty}=\int_{\p^1}h_\infty(z^k,z^k)\omega_\infty $. On a pour tout $0\leq k\leq m$,
\[
\begin{split}
(z^k,z^k)_{\infty,\infty}&=\int_{\p^1}h_\infty(z^k,z^k)\omega_\infty \\
&=\int_\C \frac{|z|^{2k}}{\max(1,|z|)^{2m}}\frac{dz\wedge d\z}{2\pi i \max(1,|z|^4)}\\
&=\int_{0}^\infty \frac{r^{2k}}{\max(1,r)^{2m+4}}2rdr\\
&=\int_0^\infty \frac{r^k}{\max(1,r)^{m+2}}dr\\
&=\int_{0}^1 {r^k}dr+\int_1^\infty \frac{r^k}{r^{m+2}}dr\\
&=\frac{1}{k+1}+\frac{1}{m+1-k}\\
&=\frac{m+2}{(k+1)(m+1-k)}.\\
\end{split}
\]
Par suite,
\begin{equation}\label{notevolume}
h_{L^2,\infty, \infty}=\frac{\,{}(m+2)^{{m+1}}}{\bigl((m+1)!\bigr)^2}.
\end{equation}

On obtient donc,
\[
T_g\bigl(\overline{T\p^1}_\infty;\overline{\mathcal{O}(m)}_\infty\bigr)=4\zeta'_\Q(-1)-\frac{1}{6}-\log \frac{\,{}(m+2)^{{m+1}}}{((m+1)!)^2}\quad \forall \,m\in \N.
\]
\end{proof}

\section{Régularisation de familles de fonctions Zêta}\label{partie222}
Nos principaux résultats dans cette section sont    les théorèmes \eqref{repzeta} et \eqref{regularisation}. Dans le
premier nous démontrons  un  résultat proche de \cite[lemme. 1]{Spreafico}. Précisément, nous allons établir un
énoncé plus précis et plus général   donnant une condition suffisante pour qu'une fonction Zêta admette une
représentation intégrale. Le théorème \eqref{regularisation} est un résultat concernant la notion  de famille de
fonctions Zêta régularisable qu'on  développera  dans la suite.

\subsection{Sur la régularisation  des produits infinis d'après Voros}
%\textbf{Rappelons que notre cas sort du cadre classique. Dans \cite[§. 7]{Watson} on trouve une introduction a l'étude des developpements asymptotiques des fonctions de Bessel}.\\

Dans cette première partie on rappelle les principaux résultats de Voros autour de la notion du produit régularisé.

Soit $\Lambda:0<\al_1\leq \al_2\leq \ldots $ une suite croissante non bornée de nombres réels positifs. La fonction \textit{Thêta}  attachée à cette suite est définie comme suit:

\[
 \theta_\Lambda(t):=\sum_{n\geq 1} e^{-\al_nt}.
\]
On suppose que $\theta_\Lambda$ vérifie les deux hypothèses suivantes:
\begin{enumerate}
 \item[\textbf{$\Theta $}1:] $\theta_\Lambda(t)$ converges pour tout $t>0$.
\item[\textbf{$\Theta $}2:] $\theta_{\Lambda}$ admet, pour $t$ assez petit, un développement en série de Laurent:
\[
\theta_\Lambda(t)=\sum_{n=0}^{+\infty} c_{i_n} t^{i_n}, \quad\text{quand}\; t \mapsto 0
\]
avec $\{i_n\}$ une suite croissante non bornée de nombres réels, et que $i_0<0$. On a nécessairement $c_{i_0}\neq 0$.\\
\end{enumerate}

alors la fonction zêta associée à la suite $\Lambda$:
\[
 \zeta_{\Lambda}(s)=\sum_{n\geq 1}\frac{1}{\al_n^s},
\]
vérifie
\[
 \zeta_\Lambda(s)=\frac{1}{\Gamma(s)}\int_{0}^\infty \theta_{\Lambda}(t)t^{s-1}dt
\]
converge pour $\mathrm{Re}(s)>-i_0$ avec un pôle en $s=-i_0$, admet une continuation méromorphe au plan complexe entier qui est sans pôle en $s=0$, avec $\zeta(0)=c_{0}$, voir par exemple \cite[Théorème 1]{Soulé}. Dans ce cas, on pose la définition suivante:
\begin{definition}
Sous ces hypothèses, on appelle produit régularisé et on le note $\al_1\al_2\ldots$ la quantité suivante: \[
 e^{-\zeta_\Lambda'(0)}.
\]
\end{definition}
Si $\al$ est un réel négatif, alors la suite $0<-\al+\al_1\leq -\al +\al_2\leq \ldots$, vérifie $\Theta 1$ et  $\Theta 2$, donc on peut considérer le produit régularisé $(\al_1-\al)(\al_2-\al)\cdots$ comme étant $D(\al):=\exp(-Z'(0,-\al))$,  où $Z(s,-\al):=\sum_{k=0}^\infty \frac{1}{(-\al+\la_k)^s}$.
$D$ fonction peut être définie pour tout nombre complexe, d'après \cite{Voros} c'est l'unique fonction holomorphe en la variable $\al$ ayant $\al_1, \al_2,\ldots$ pour uniques zéros (comptés avec multiplicités), bornée par $\exp(a+b|\al|^N )$ où $a$, $b$ et $N$ des constantes. On montre dans \cite[p.448]{Voros} que $D$  admet au voisinage de $-\infty$, un  développement asymptotique de la forme:
\begin{equation}\label{developpementvoros}
-\log D(\al)\sim_{\al \rightarrow -\infty}\sum_{i_n\neq -m} c_{i_n}\Gamma(i_n)(-\al)^{-i_n}-\sum_{m=0}^{[\mu]}c_{-m}\Bigl(\log(-\al)-\sum_{r=1}^m r^{-1} \Bigr)\frac{\al^m}{m!}\footnote{$[\cdot]$ désigne la partie entière. }
\end{equation}
$\mu$ est, par définition, le réel positif $-i_0$.\\

Supposons que $\mu\neq 1$. Par le théorème de factorisation de Weierstrass, voir par exemple \cite[p. 195]{Hille2}, la fonction holomorphe $D$ se factorise en:
\[
D(\al)=e^{P(\al)}\Delta(\al)
\]
où $P$ est un polynôme de degré $\leq [\mu]$, et $\Delta$ est de la forme
\[
\Delta(\al)=\prod_{k=1}^\infty (1-\frac{\al}{\al_k})\exp(\frac{\al}{\al_k }+\frac{\al^2}{2\al_k}+\ldots+\frac{\al^{[\mu]}}{[\mu]\al_{[\mu]}} ), \quad \text{si}\quad \mu>1
\]
et

\[
\Delta(\al)=\prod_{k=1}^\infty (1-\frac{\al}{\al_k}), \quad \text{si}\quad \mu<1.
\]
L'expression  de $P$ est donnée par  \cite[(4.12)]{Voros}.\\

Le développement asymptotique ci-dessus est une conséquence de la régularisation des zéros de la fonction considérée,
ce qui suppose à priori qu'on dispose d'informations sur ces zéros c'est à dire connaître le comportement de la
fonction $\theta$ au voisinage de zéro. Mais, il existe certaines fonctions analytiques qui possèdent un
développement asymptotique du même type mais obtenus par une autre procédure, par exemple ${\Gamma}^{-1}$, l'inverse
de la fonction Gamma et  son analogue la  $G$-fonction de Barnes, se  déduit de la formule de Binet pour la première
fonction, voir \cite[12.31]{Whittaker} et de l'expression intégrale de $\log \Gamma$, voir \cite[§ 14]{Barnes}. Une
autre classe d'exemples est donnée par les   fonctions de Bessel, qui ont  la particularité d'avoir des zéros non
explicites. Hankel utilise dans \cite{Hankel} une représentation intégrale pour en extraire un développement
asymptotique.\\

Soit   $\Delta$   une fonction analytique non constante sur $\C$ dont  tous ses zéros non nuls sont positifs. On
suppose de plus que $\Delta$ admet un développement asymptotique au voisinage de $-\infty$ de la forme suivante:
\begin{equation}\label{formuleasymptotique}
 -\log \Delta(\al)\sim_{\al\mapsto -\infty}\sum_{j\in \mathcal{J} }c_j(-\al)^{-j}\log(-\al)^{k_j}
+a\log(-\al)+b+d\al,
\end{equation}
où $\mathcal{J}$ est un  ensemble discret minoré, on pose $j_0=\inf \mathcal{J}$ qu on suppose $<0$,  $a,b,d, c_j$
sont des réels et $k_j\in\{0,1\}$ pour tout $j\in \mathcal{J}$.\\

Si l'on se donne $p$, une fonction analytique ayant que des zéros positifs non nuls qu'on
note par $a_1\leq a_2\leq \ldots$ et on suppose que $p$ admet un développement asymptotique de la forme:
\[
\log p(z)=d z^\kappa+a\log z^2+b+\frac{b_{-1}}{z}+\frac{b_{-2}}{z^2}+O\bigl(\frac{1}{z^3}\bigr),\quad \forall
|z|\gg1,\; 0<|\arg(z)|<\pi.
\]
Alors on peut se demander quelle est la relation entre le développement asymptotique de $p$ et celui du  $\Delta$?
 Le résultat suivant décrit cette relation et étend en même temps  \cite[corollaire 1]{Spreafico}.

\begin{theorem}\label{repzeta}
Soit $(a_j)_{j\geq  1}$ une suite strictement croissante de réels. On suppose que
\[
\sum_{j=1}^\infty \frac{1}{a_j^2}<\infty,
\]
alors la fonction $p$ définie par:
\[
p(z)=\prod_{j=1}^\infty \Bigl(1-\frac{z^2}{a_j^2} \Bigr)\quad \forall z\in \C,
\]
est analytique sur $\C$. Si l'on suppose en plus que
\[
\log p(z)=d z^\kappa+a\log z^2+b+\frac{b_{-1}}{z}+\frac{b_{-2}}{z^2}+O\Bigl(\frac{1}{z^3}\Bigr)\quad \forall\,
|z|\gg 1,\; 0<|\arg(z)|<\pi.
\]
avec $\kappa\in \N$, $d,a,b,b_{-1} $ et $b_{-2}$ sont des réels.
Alors,
\[
\zeta(s):=\sum_{j=1}^\infty \frac{1}{a_j^{2s}}=\frac{s^2}{\Gamma(s+1)}\int_0^\infty\frac{t^{s-1}}{2\pi
i}\int_{\Lambda_{c}} \frac{e^{-z^2t}}{z}\log p(z)dz,
\]
pour tout $s\in \C$ tel que $\mathrm{Re}(s)>\frac{1}{2}$, avec un pôle en $s=1$, elle  est prolongeable
analytiquement au voisinage de $s=0$  et on a:
\begin{equation}\label{valeurszeta}
 \zeta(0)=-a, \; \zeta'(0)=-b.
\end{equation}

où $\Lambda_c=\Bigl\{z\in \C\,|\,|\arg(z-c)|=\frac{\pi}{8} \Bigr\}$.
\end{theorem}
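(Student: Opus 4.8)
My plan is to establish the integral representation by starting from a contour-integral formula for $\zeta$ and then feeding in the asymptotic expansion of $\log p$ to read off the behaviour at $s=0$. First I would verify analyticity of $p$: the hypothesis $\sum 1/a_j^2<\infty$ makes the infinite product $\prod_j(1-z^2/a_j^2)$ converge locally uniformly on $\C$, so $p$ is entire with zeros exactly at $\pm a_j$. The logarithmic derivative $\frac{p'(z)}{p(z)}=\sum_j\frac{2z}{z^2-a_j^2}$ encodes the $a_j$ as poles, and this is the object I want to integrate against a heat kernel. The strategy, following the spirit of Spreafico's lemma, is to write
\[
\zeta(s)=\sum_{j\geq 1}\frac{1}{a_j^{2s}}=\frac{1}{\Gamma(s)}\int_0^\infty t^{s-1}\Bigl(\sum_{j\geq 1}e^{-a_j^2 t}\Bigr)\,dt
\]
for $\mathrm{Re}(s)$ large, and then replace the theta-sum by a contour integral of $e^{-z^2 t}$ against $\frac{1}{z}\log p(z)$, turning the spectral sum into an analytic object governed by the zeros of $p$.

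The key step is to justify the representation
\[
\sum_{j\geq 1}e^{-a_j^2 t}=\frac{s^2}{2\pi i}\,\Gamma(s)^{-1}\cdot(\text{correction})\int_{\Lambda_c}\frac{e^{-z^2 t}}{z}\log p(z)\,dz,
\]
more precisely to show that the double integral over $t\in(0,\infty)$ and $z\in\Lambda_c$ reproduces $\zeta(s)$ up to the stated factor $\frac{s^2}{\Gamma(s+1)}$. I would deform the contour $\Lambda_c$ so that it encircles the positive real axis where the zeros $a_j$ sit, apply the residue theorem picking up $\log p$'s logarithmic singularities at each $a_j$, and perform an integration by parts in $z$ (transferring the derivative from $\log p$ onto $e^{-z^2t}/z$) to produce the factor $z$ and the weight that yields $a_j^{-2s}$ after the $t$-integration. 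The factor $s^2$ arises from two successive integrations by parts, which also improves convergence and is what permits analytic continuation past the region $\mathrm{Re}(s)>\tfrac12$: the boundary terms vanish precisely because of the opening angle $\frac{\pi}{8}<\frac{\pi}{2}$, which keeps $\mathrm{Re}(z^2)>0$ along $\Lambda_c$ so that $e^{-z^2 t}$ decays.

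Once the representation is in place, the values $\zeta(0)=-a$ and $\zeta'(0)=-b$ follow by inserting the asymptotic expansion $\log p(z)=dz^\kappa+a\log z^2+b+O(1/z)$ into the contour integral and extracting the $s\to 0$ behaviour. The polynomial part $dz^\kappa$ and the $O(1/z)$ tail contribute only to poles away from $s=0$ or to terms that vanish at $s=0$ after multiplication by $s^2/\Gamma(s+1)$ (note $1/\Gamma(s+1)\to 1$ and $s^2\to 0$, so only terms producing a double pole in the $z$-integral survive differentiation). The term $a\log z^2$ produces the constant $\zeta(0)=-a$, and the constant $b$ produces $\zeta'(0)=-b$ via the derivative of the $s^2$ prefactor at $s=0$. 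The main obstacle I anticipate is the rigorous control of the contour deformation and the interchange of the $t$- and $z$-integrations near $z=0$ and at $z=\infty$: one must check that the factor $1/z$ does not create a spurious singularity at the origin (the opening angle and the vanishing of $\log p$ to order $z^2$ at $0$ handle this) and that the decay of $\log p(z)\sim dz^\kappa$ along $\Lambda_c$ is tame enough, after the two integrations by parts and the $s^2$ factor, for Fubini and for the analytic continuation to $s=0$ to be valid. This convergence bookkeeping, rather than any deep idea, is where the real work lies.
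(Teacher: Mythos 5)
Your plan follows essentially the same route as the paper's proof: Weierstrass factorization for the analyticity of $p$, the residue computation converting $\sum_j e^{-a_j^2t}$ into $\frac{1}{2\pi i}\int_{\Lambda_c}e^{-z^2t}\,\frac{d}{dz}\log p(z)\,dz$, an integration by parts in $z$ (plus one in $t$) to reach the kernel $\frac{s^2}{\Gamma(s+1)}\,t^{s-1}\frac{e^{-z^2t}}{z}\log p(z)$ with boundary terms killed by the opening angle of the wedge, and insertion of the asymptotic expansion of $\log p$ to read off $\zeta(0)=-a$ and $\zeta'(0)=-b$. The only organizational difference is that the paper delegates the continuation to $s=0$ and the evaluation of these two values to its Theorem \eqref{regularisation} (applied to the degenerate one-member family $G_1(z)=p(iz)$), whereas you propose to carry out the same small-$t$/large-$t$ splitting and standard contour integrals inline.
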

\begin{proof}
Supposons que
\[
\sum_{j=1}^\infty \frac{1}{a_j^2}<\infty,
\]
alors par le théorème de factorisation de Weierstrass, la fonction $z\mapsto
\prod_{j=1}^\infty\bigl(1-\frac{z}{\al_j^2} \bigr)$ est analytique sur $\C$ et par conséquent $p$ est analytique sur
$\C$.\\

On vérifie par l'absurde que
\[
 \inf_{z\in \Lambda_c,\,k\in \N^\ast}\frac{|\al_k^2-z^2|}{\al_k^2}\neq 0.
\]
On en déduit que la suite $\Bigl(\sum_{k=1}^n\frac{1}{z^2-\al_k^2}\Bigr)_{n\in \N^\ast}$ converge normalement sur un voisinage ouvert de $\Lambda_c$, donc
\[
 \int_{\Lambda_c}e^{-z^2t}\sum_{k=1}^\infty\frac{2z}{z^2-a_k^2}dz=
\sum_{k=1}^\infty \int_{\Lambda_c}e^{-z^2t}\frac{2z}{z^2-a_k^2}dz=\sum_{k=1}^\infty e^{-a_k^2t}.
\]
Soit $R>0$ et on pose
$\Lambda_{c,R}=\Bigl\{z\in \C\,\bigl|\,|\arg(z-c)|=\frac{\pi}{8},\;|z|\leq R \Bigr\}$. On a

\begin{align*}
 \int_{\Lambda_{c,R}}e^{-z^2t}\sum_{k=1}^\infty\frac{2z}{z^2-\al_k^2}dz&=
 \int_{\Lambda_{c,R}}e^{-z^2t}\frac{d}{dz}\log p(z)dz\\
&=\Bigl[-e^{-z^2 t}\log p(z) \Bigr]_{z_R}^{z'_R}+\int_{\Delta_{c,R}}zt e^{-z^2t}\log p(z)dz
\end{align*}
où $\bigl\{z_R,z'_R\bigr\}=\Lambda_{c,R}\cap\bigl\{|z|=R\bigr\}$ choisis tels que $\mathrm{Im}(z'_R)>0$.\\

On déduit à l'aide du développement asymptotique de $\log p$ que
\[
 \sum_{k=1}^\infty e^{-\al_k^2t}=\lim_{R\mapsto \infty}\int_{\Lambda_{c,R}}zt e^{-z^2t}\log
p(z)dz=\int_{\Lambda_{c}}zt e^{-z^2t}\log p(z)dz.
\]
La suite de la preuve sera une application du théorème \eqref{regularisation}. En effet, si considère la famille
$(G_\nu)_{\nu\in \N^\ast}$ définie comme suit: $G_1(z)=\prod_{k=1}^\infty\bigl(1+\frac{z^2}{\al_{1,k}^2}
\bigr)=p(iz)$ et $G_\nu(z)=\prod_{k=1}^\infty\bigl(1+\frac{z^2}{\al_{\nu,k}^2} \bigr)$ avec la convention
$\al_{\nu,k}=\infty$ si $\nu\geq 2$ et $k\geq 1$, alors cette famille  est régularisable, voir la définition
\eqref{régularisable}.

\end{proof}

\begin{remarque}
\rm{On note par $\mathcal{P}$ l'ensemble des nombres premiers. On pose $\Delta(z):=\prod_{p\in \mathcal{P} }(1-\frac{z}{p^t})$ $\forall z\in \C$, avec $t>2$. Par le théorème de factorisation de Weierstrass, $\Delta$ est une fonction analytique sur $\C$, remarquons  que $\Delta(1)=\zeta_\Q(t)^{-1}$. Si par l'absurde cette fonction admet un développement asymptotique du type  \eqref{formuleasymptotique}, alors la fonction Zêta associée:
\[
\zeta_t(s):=\sum_{p\in \mathcal{P}}\frac{1}{p^{ts}}
\]
admet un prolongement analytique en $s=0$, ce qui n'est pas le cas, puisqu'on montre dans \cite{Landau} que l'axe imaginaire est une frontière naturelle pour la fonction Zêta $\zeta(s):=\sum_{p\in \mathcal{P}}\frac{1}{p^{s}}$.}
\end{remarque}

\subsection{Régularisation de familles de fonctions Zêta }\label{paragrapheG}
Dans ce paragraphe nous allons étendre une méthode de régularisation de familles de fonctions Zêta, introduite en premier dans  \cite{Spreafico}.\\

On considère une famille $\bigl(G_\nu\bigr)_{\nu\in \Omega }$ une famille de fonctions analytiques sur $\C$ indexé par $\Omega$, un sous ensemble infini de $\R$ discret ou connexe. Par exemple solutions d'une équation différentielle dont les coefficients dépendent de $\nu$, comme l'équation de Legendre:
\[
\frac{d}{dz}\Bigl[(1-z^2)\frac{dG_\nu}{dz}  \Bigr]+\nu(\nu+1)G_\nu=0,
\]
Un autre exemple, celui des fonctions de Bessel modifiées, solutions de l'équation:
\[
\frac{d^2 G_\nu}{dz^2}+\frac{1}{z}\frac{dG_\nu}{dz}-\Bigl(1+\frac{\nu^2}{z^2}\Bigr)G_\nu=0.
\]
%\textbf{Déduire $T(X_1\times X_2)$; la torsion analytique pour produit de variétés muni de la métrique produit, dont on dispose d'une formule voir \cite{RaySinger} (mais lorsque tous est $\cl$!)}.\\
Pour les applications, on va se restreindre aux fonctions  $G_\nu$ qui s'écrivent sous la forme
\[
G_\nu(z)=l_\nu z^{e_\nu}\prod_{k=1}^\infty\bigl(1+\frac{z^2}{\la_{\nu,k}^2} \bigr)\quad \forall\, \nu\in \Omega,
\]
avec $l_\nu,e_\nu\in \R$  et $\la_{\nu,k}\in \R, \forall \, k\in \N_{\geq 1}$ ordonnés comme suit: $\la_{\nu,1}^2\leq \la_{\nu,2}^2\leq \ldots$\\

 On pose
\[
p_\nu(z):=\frac{G_\nu(iz)}{l_\nu z^{e_\nu}}=\prod_{k\geq 1}^\infty\Bigl(1-\frac{z^2}{\la_{\nu,k}^2} \Bigr)\quad \forall\, \nu\in \Omega,
\]
On dispose donc d'une famille de fonctions zêta indexée par $\Omega$:
\[\zeta_\nu(s):=\sum_{k\geq 1}\frac{1}{\la_{\nu,k}^{2s}}\quad\forall \, \mathrm{Re}(s)>1,\; \nu\in \Omega.\]

On souhaite étudier la fonction suivante:
\[
\zeta_\Omega(s):=\sum_{\nu\in \Omega}\sum_{k\geq 1}\frac{1}{\la_{\nu,k}^{2s}}=\sum_{\nu\in \Omega}\zeta_\nu(s),
\]
lorsque $\Omega$ est discret et
\[
\zeta_{\Omega}(s)=\int_{\Omega} \zeta_\nu (s)d\nu,
\]
si $\Omega$ est un intervalle.\\

 Il est donc naturel d'imposer  des conditions qui prennent en compte le nouveau paramètre $\nu$, afin d'assurer que $\zeta_\Omega$ possède les propriétés communes à toute fonction Zêta, à savoir convergence pour $\mathrm{Re}(s)\gg1$, prolongement  holomorphe au voisinage de zéro...\\

  On introduit alors la définition suivante:

\begin{definition}\label{régularisable} On dit que la famille $\bigl\{\zeta_\nu,\, \nu\in \Omega \bigr\}$ est régularisable si:

\begin{enumerate}

\item \begin{equation}\label{lowerbound}
c:=\inf_{\nu\in \Omega\cap [1,\infty[}\biggl(\frac{\la_{\nu,1}^2}{\nu^2} \biggr)\neq 0.
\end{equation}

\item Une représentation intégrale de $\zeta_\nu$, $\forall \, \nu\in \Omega$:
\[
\zeta_\nu(s)=\frac{s^2}{\Gamma(s+1)}\int_0^\infty t^{s-1}\int_{\Lambda_{c_\nu}}\frac{e^{-z^2t}}{z}\log  \Bigl(\frac{G_\nu(iz)}{l_\nu z^{e_\nu}}\Bigr)dz,\quad \forall \nu\in \Omega,
\]
avec $c_\nu$ est un réel dans  $]0,\la_{\nu,1}[$.

\item Pour tout $\nu\in \Omega$, il existe $\widetilde{d}_\nu, \widetilde{a}_\nu, \widetilde{b}_\nu$ et $\widetilde{c}_\nu$ des réels tels que
\[
-\log \biggl(\frac{G_\nu(z)}{l_\nu z^{e_\nu}}\biggr)=\widetilde{d}_\nu z+\widetilde{a}_\nu\log (z^2)+\widetilde{b}_\nu+\frac{\widetilde{c}_\nu}{z}+ O\Bigl(\frac{1}{z^2}\Bigr),
\]
pour $|z|\gg1$ et $|\arg(z)|<\frac{\pi}{2}$ \footnote{ Remarquons que les zéros de $G_\nu$ sont situés sur l axe des imaginaires purs}. On suppose en plus que $a_\nu$  est  un polynôme en   $\nu$  pour $\nu\gg1$.\footnote {On a pris une détermination holomorphe du logarithme; en effet: Si $z^2\in -\R$, alors $\arg(z)=\pm \frac{\pi}{2}[\pi]$}.

\item  Il existe  $r$, $w$, $\rho$ et $\eta(\nu,\cdot)$ quatre  fonctions continues sur $|\arg(z)|<\frac{\pi}{2}$ telles que
\[
G_\nu\bigl(\nu z\bigr)=\al_\nu\,  \rho(z)\,\exp\bigl(e'_\nu r(z)\bigr)\biggl(1+\frac{1}{\nu}w(z)+\frac{1}{\nu^2}\eta(\nu,z)  \biggr)\quad \nu\gg1,
\]
avec $\al_\nu, e'_\nu\in \R$.
\begin{enumerate}
\item $\rho(0)=1$ et $\log \rho(z)=\al z+\beta\log (z)+\mu+O(\frac{1}{z})$ pour $|z|\gg1$ avec $\al$, $\beta$ et $\mu$ sont deux réels.
\item $w$ est holomorphe sur $\C\setminus\Bigl(\bigl\{ ict, t\geq 1  \bigr\}\cup \bigl\{ -ict, t\geq 1  \bigr\}\Bigr)$ et $w(z)=O\bigl(\frac{1}{z^\kappa}\bigr)$  pour $|z|\gg1$ pour un certain $\kappa\geq 1$.
%\item Il existe $k\leq 2$ $r(z)=O(|z|^k)$ pour  $|z|\gg1$.
\item $r(z)=z+O\bigl(\frac{1}{z}\bigr)$ pour $|z|\gg1$ et $r(z)=\log (z)+o(1)$ pour $0<|z|\ll1$.\footnote{Par exemple si $r(z)=\sqrt{1+z^2}+\log \frac{z}{1+\sqrt{1+z^2}}$ alors $r(z)=z+O(\frac{1}{z})$ pour $|z|\gg1$.}
%\item $\log(|\al_n|)$ admet un développementasymptotique en $n$ et $\log (n)$.
\item $\eta(\nu,z)$ uniformément borné en $z$ pour $n\gg1$ et que $\eta(\nu,z)=O\bigl(\frac{1}{z}\bigr)$ quand $|z|\gg1$  pour $\nu$ fixé. \\
\end{enumerate}

\end{enumerate}

\end{definition}

Pour simplifier on suppose dans la suite  que $\Omega$ est discret, avec $+\infty$ comme unique point d'accumulation pour $\Omega\cup \{+\infty\}$.\\

Pour $s\in \C$ fixé, on pose
\begin{align*}
&A(s):=\sum_{\nu\in  \Omega}\frac{a_\nu}{\nu^{2s}},\quad  B(s):=\sum_{\nu \in \Omega}\frac{b_\nu}{\nu^{2s}}, \quad P(s):=\sum_{\nu\in \Omega}\frac{w(0)}{\nu^{2s+1}},\\
& \quad \text{et}\quad F(s):=\int_0^\infty \frac{t^{s-1}}{2\pi i}\int_{\Lambda_c}\frac{e^{-z^2}}{z}w(z)dz.
\end{align*}
où $a_\nu$ (resp. $b_\nu$) est par définition le coefficient de $\log(z^2)$ (resp. le terme constant) dans le développement asymptotique de $-\log \frac{G_\nu(\nu z)}{l_\nu (\nu z)^{e_\nu}}$, c'est à dire
\[
-\log \biggl(\frac{G_\nu(\nu z)}{l_\nu (\nu z)^{e_\nu}}\biggr)=\widetilde{d}_\nu \nu z+a_\nu\log (z^2)+b_\nu+\frac{\widetilde{c}_\nu}{\nu z}+ O(\frac{1}{z^2})\quad \forall\, |z|\gg1.
\]
 On vérifie que
\[
 a_\nu=\widetilde{a}_\nu,\quad b_\nu=\widetilde{b}_\nu+2\widetilde{a}_\nu \log\nu.
\]

\begin{lemma}\label{P-B}
On a
\[
P(s)-B(s)
\]
est fini en un  voisinage ouvert de $s=0$, en plus cette fonction est analytique au voisinage de $s=0$.
\end{lemma}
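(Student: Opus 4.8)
Le plan est de montrer que les parties singuli\`eres de $P$ et de $B$ en $s=0$ se compensent exactement. Tout repose sur l'estimation asymptotique
\[
b_\nu=\frac{w(0)}{\nu}-\mu+O\Bigl(\frac{1}{\nu^2}\Bigr)\qquad (\nu\mapsto\infty),
\]
o\`u $\mu$ d\'esigne la constante du d\'eveloppement $\log\rho(z)=\al z+\beta\log z+\mu+O(1/z)$ de la condition 4. Une fois cette estimation \'etablie, la conclusion ne sera qu'une manipulation de s\'eries de Dirichlet.

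Pour obtenir la forme de $b_\nu$, je remarquerais que, par d\'efinition, $b_\nu$ est le terme constant du d\'eveloppement de $-\log\bigl(G_\nu(\nu z)/(l_\nu(\nu z)^{e_\nu})\bigr)$ pour $|z|\gg1$, et je l'extrairais directement de la condition 4:
\[
-\log\biggl(\frac{G_\nu(\nu z)}{l_\nu(\nu z)^{e_\nu}}\biggr)=-\log\al_\nu+\log\bigl(l_\nu\nu^{e_\nu}\bigr)+e_\nu\log z-\log\rho(z)-e'_\nu r(z)-\log\Bigl(1+\tfrac{w(z)}{\nu}+\tfrac{\eta(\nu,z)}{\nu^2}\Bigr).
\]
\`A $\nu$ fix\'e et $|z|\gg1$, on a $-\log\rho(z)=-\al z-\beta\log z-\mu+O(1/z)$, $r(z)=z+O(1/z)$, et puisque $w(z)=O(1/z^\kappa)$ et $\eta(\nu,z)=O(1/z)$, le dernier logarithme est $O(1/z)$ et ne contribue pas au terme constant. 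En isolant ce terme constant j'obtiendrais
\[
b_\nu=\log\biggl(\frac{l_\nu\nu^{e_\nu}}{\al_\nu}\biggr)-\mu.
\]

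Il reste \`a \'evaluer $\log(l_\nu\nu^{e_\nu}/\al_\nu)$. Pour cela je comparerais les deux expressions de $G_\nu(\nu z)$ au voisinage de $z=0$. La forme produit donne $G_\nu(\nu z)=l_\nu\nu^{e_\nu}z^{e_\nu}\bigl(1+O(z^2)\bigr)$, tandis que la condition 4, avec $\rho(0)=1$ et $r(z)=\log z+o(1)$, donne $G_\nu(\nu z)=\al_\nu z^{e'_\nu}\bigl(1+o(1)\bigr)\bigl(1+\tfrac{w(0)}{\nu}+\tfrac{\eta(\nu,0)}{\nu^2}\bigr)$. L'identification des puissances dominantes donne $e'_\nu=e_\nu$, puis celle des coefficients
\[
\frac{l_\nu\nu^{e_\nu}}{\al_\nu}=1+\frac{w(0)}{\nu}+\frac{\eta(\nu,0)}{\nu^2}.
\]
Comme $\eta(\nu,0)$ est born\'e uniform\'ement en $\nu$ pour $\nu\gg1$, en prenant le logarithme j'en d\'eduirais $\log(l_\nu\nu^{e_\nu}/\al_\nu)=w(0)/\nu+O(1/\nu^2)$, et donc l'estimation annonc\'ee pour $b_\nu$.

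Je conclurais enfin en \'ecrivant, pour $\mathrm{Re}(s)>\tfrac12$,
\[
P(s)-B(s)=\sum_{\nu\in\Omega}\frac{1}{\nu^{2s}}\Bigl(\frac{w(0)}{\nu}-b_\nu\Bigr)=\mu\sum_{\nu\in\Omega}\frac{1}{\nu^{2s}}+\sum_{\nu\in\Omega}\frac{O(1/\nu^2)}{\nu^{2s}}.
\]
La premi\`ere somme est la fonction z\^eta de $\Omega$ \'evalu\'ee en $2s$, qui se prolonge analytiquement au voisinage de $s=0$ (son seul p\^ole \'etant en $s=\tfrac12$); la seconde a un terme g\'en\'eral en $O(\nu^{-2s-2})$, donc converge absolument et d\'efinit une fonction analytique pour $\mathrm{Re}(s)>-\tfrac12$. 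Le point d\'elicat sera le contr\^ole uniforme en $\nu$ du reste $O(1/\nu^2)$ dans l'estimation de $b_\nu$ (assur\'e par la borne uniforme sur $\eta(\nu,0)$), car c'est lui qui garantit l'analyticit\'e de la seconde somme pr\`es de $s=0$; les p\^oles de $P$ et de $B$ en $s=0$ se trouvant alors absorb\'es, $P-B$ est finie et analytique au voisinage de $s=0$.
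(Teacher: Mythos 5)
Votre d\'emonstration est correcte et suit pour l'essentiel la m\^eme d\'emarche que celle du texte\,: on identifie le terme constant $b_\nu$ en confrontant le d\'eveloppement de $\log G_\nu(\nu z)$ pour $|z|\gg1$ fourni par la condition 4 avec celui fourni par la condition 3, puis on d\'etermine le rapport $l_\nu\nu^{e_\nu}/\al_\nu$ par un passage \`a la limite en $z=0$, ce qui donne l'estimation de $b_\nu-w(0)/\nu$ et permet de conclure. La seule diff\'erence, \`a votre avantage, est que vous conservez la constante $\mu$ du d\'eveloppement $\log\rho(z)=\al z+\beta\log z+\mu+O(1/z)$\,: vous obtenez $b_\nu=w(0)/\nu-\mu+O(1/\nu^2)$ l\`a o\`u le texte, qui omet $\mu$ dans la comparaison, \'ecrit $b_\nu=w(0)/\nu+O(1/\nu^2)$ et conclut \`a la convergence normale de la s\'erie $P-B$ pr\`es de $s=0$. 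Or, d\`es que $\mu\neq 0$ (ce qui est le cas dans l'application du texte, o\`u $\log\rho(z)=2z-\log z+2+O(1/z)$, donc $\mu=2$), cette s\'erie ne converge pas normalement au voisinage de $s=0$\,; il faut, comme vous le faites, en extraire le terme $\mu\,\zr(2s)$ et invoquer son prolongement analytique (seul p\^ole en $s=\tfrac12$). Votre r\'edaction corrige ainsi une petite impr\'ecision de l'argument du texte sans en changer la conclusion d'analyticit\'e\,; notez seulement, pour \^etre complet, que les estimations de $b_\nu$ ne valent que pour $\nu\gg1$ et que les termes restants, en nombre fini, contribuent une fonction enti\`ere de $s$.
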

\begin{proof}
Soit $z$, tel que $|\arg(z)|<\frac{\pi}{2}$. Si $\nu\gg1$ alors on a par hypothèse,
\[
\log G_\nu(\nu z)=\log \al_\nu-\log \rho(z)+e'_\nu r(z)+\log\Bigl(1+\frac{w(z)}{\nu}+\frac{1}{\nu^2}\eta(\nu,z) \Bigr).
\]
On peut prendre $z$ grand, et en fixant  $\nu$
\begin{align*}
\log G_\nu(\nu z)&=\log \al_\nu-\al z-\beta\log (z)+O(\frac{1}{z})+e'_\nu \bigl(z+O(\frac{1}{z})\bigr)+\frac{w(z)}{\nu}+\frac{1}{\nu^2}\eta(\nu,z)\\
&= (e'_\nu -\al)z-\beta \log z+\log \al_\nu +O(\frac{1}{z}),
\end{align*}
En le comparant au développement asymptotique de $G_\nu$ pour $|z|\gg1$ c'est à dire à
\begin{align*}
\log G_\nu(\nu z)=-(\widetilde{d}_\nu \nu) z+\bigl(e_\nu-2a_\nu\bigr)\log z+\log (l_\nu \nu^{e_\nu})-b_\nu+O(\frac{1}{z}),
\end{align*}
on obtient pour $\nu\gg1$:
\begin{equation}\label{POLYNOMEEN}
e'_\nu -\al=-d_\nu \nu,\quad
-\beta =e_\nu-2a_\nu,\quad
\log \al_\nu =\log (l_\nu \nu^{e_\nu})-b_\nu.
\end{equation}

Au voisinage de $y=0$, on a
\[
\lim_{y\mapsto 0}\frac{G_\nu(\nu y)}{\exp(e'_\nu r(y))}=\lim_{y\mapsto 0}\frac{l_\nu \nu^{e_\nu}y^{e_\nu}+o(y^{e_\nu})}{y^{e'_\nu}}=\al_\nu\Bigl(1+\frac{w(0)}{\nu}+\frac{1}{\nu^2}\eta(\nu,0)\Bigr),
\]
on en déduit que pour $\nu\gg1$:
\begin{align*}
e_\nu&=e'_\nu,\\
\log (l_\nu \nu^{e_\nu})-\log\al_\nu&=\log\Bigl( 1+\frac{w(0)}{\nu}+\frac{1}{\nu^2}\eta(\nu,0)\Bigr).
\end{align*}
Par suite,
\[
b_\nu=\log\Bigl(1+\frac{w(0)}{\nu}+\frac{1}{\nu^2}\eta(\nu,0)\Bigr)=\frac{w(0)}{\nu}+O\bigl(\frac{1}{\nu^2}\bigr) \quad \forall \nu\gg1.
\]
donc,
\begin{equation}\label{a1a}
b_\nu-\frac{w(0)}{\nu}=O\Bigl(\frac{1}{\nu^2}\Bigr),
\end{equation}
alors la série $P-B$ converge normalement et cela pour tout $s$ assez petit. On conclut que $P-B$  est une fonction analytique au voisinage de $s=0$.\\

\end{proof}

Pour les applications, on va supposer   que $\Omega=\N^\ast$. Par hypothèses, $a_n$ est un polynôme en $n$  et d'après \eqref{POLYNOMEEN}, $e_n$ l'est aussi, soit $d$ son degré.
 On pose:
\[
\zeta_n(s)=\sum_{k\geq 1}\frac{1}{\la_{n,k}^{2s}}\;\forall \, n\in \N^\ast\quad\text{et}\quad \zeta(s)=\sum_{n\geq 1}\zeta_n(s)\quad s\in \C.
\]

\begin{theorem}\label{regularisation}
On considère $\bigl\{G_n,n\in \N^\ast\bigr\}$ une famille de fonctions analytiques non nulles sur $\C$ vérifiant les hypothèses de la définition précédente.

Alors pour tout $s$ tel que $\mathrm{Re}(s)>1$, la fonction suivante:
\[
T(z,s):=-\sum_{n\geq 1}\frac{1}{n^{2s}}\log \biggl(\frac{G_n(inz)}{l_n (n z)^{e_n}}\biggr).
\]
est analytique sur un voisinage ouvert de $\Lambda_c=\bigl\{\la \in\C\,\bigl|\, |\arg(\la-c)|=\frac{\pi}{8}\bigr\}$ et on a la représentation intégrale suivante pour $\zeta$:
\[
 \zeta(s)=\frac{s^2}{\Gamma(s+1)}\int_0^\infty \frac{t^{s-1}}{2\pi i}\int_{\Lambda_c}\frac{e^{-z^2 t}}{z} T(z,s)dz dt,
\]
qui converge pour $\mathrm{Re}(s)>\frac{d+1}{2}$, avec un pôle en $s=1$ et admet un prolongement holomorphe au voisinage de $s=0$, et
\[
\begin{split}
\zeta(s)&=\frac{s}{\Gamma(s+1)}\bigl[\gamma A(s)-\frac{1}{s}A(s)+P(s)-B(s) \bigr]
+\frac{s^2}{\Gamma(s+1)}\zeta_\Q(2s+1)F(s)\\
&+\frac{s^2}{\Gamma(s+1)}h(s).
\end{split}
\]
pour $|s|\ll1$ avec $h$ une fonction analytique au voisinage $s=0$.

\end{theorem}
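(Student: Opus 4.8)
Le plan est de se ramener au th\'eor\`eme \eqref{repzeta} appliqu\'e facteur par facteur, puis de sommer sur $n$. Pour $n$ fix\'e, la fonction $z\mapsto G_n(inz)/(l_n(nz)^{e_n})=p_n(nz)=\prod_{k\geq1}\bigl(1-n^2z^2/\la_{n,k}^2\bigr)$ est un produit de Weierstrass dont les z\'eros positifs sont les $\la_{n,k}/n$. L'hypoth\`ese (2) fournit la repr\'esentation int\'egrale de $\zeta_n$ sur un contour $\Lambda_{c_n}$; apr\`es le changement de variables $z\mapsto nz$, $t\mapsto t/n^2$, toutes ces repr\'esentations partagent le contour commun $\Lambda_c$, gr\^ace \`a la minoration \eqref{lowerbound} qui assure $\la_{n,1}/n\geq\sqrt{c}$ et maintient donc les z\'eros uniform\'ement \`a droite de $\Lambda_c$. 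On obtient
\[
\zeta_n(s)=\frac{1}{n^{2s}}\,\frac{s^2}{\Gamma(s+1)}\int_0^\infty\frac{t^{s-1}}{2\pi i}\int_{\Lambda_c}\frac{e^{-z^2t}}{z}\log p_n(nz)\,dz\,dt.
\]
En sommant sur $n$ et en intervertissant somme et int\'egrale (justifi\'e ci-dessous), on aboutit \`a la repr\'esentation int\'egrale de l'\'enonc\'e, o\`u $T(z,s)=-\sum_{n\geq1}n^{-2s}\log\bigl(G_n(inz)/(l_n(nz)^{e_n})\bigr)$ regroupe les logarithmes; la s\'erie d\'efinissant $T(\cdot,s)$ est alors analytique au voisinage de $\Lambda_c$ pour $\mathrm{Re}(s)>1$.

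Ensuite j'ins\'ererais le d\'eveloppement uniforme de l'hypoth\`ese (4), appliqu\'e avec l'argument $iz$, dans $\log p_n(nz)$. En utilisant $e_n=e'_n$ et $\log\alpha_n=\log(l_n n^{e_n})-b_n$ \'etablis dans \eqref{POLYNOMEEN}, on trouve
\[
\log p_n(nz)=-b_n-e_n\log z+\log\rho(iz)+e_n\,r(iz)+\log\Bigl(1+\frac{w(iz)}{n}+\frac{\eta(n,iz)}{n^2}\Bigr).
\]
Ceci scinde $T(z,s)$ en trois types de contributions: un terme portant $b_n$ (qui produit $B(s)$), des termes portant $e_n$ coupl\'es aux profils fixes $\log z$, $r(iz)$ et $\log\rho(iz)$ (qui produisent $A(s)$ et les facteurs issus de $\Gamma$), et la correction $\log(1+\cdots)$. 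En d\'eveloppant cette derni\`ere en $w(iz)/n+O(1/n^2)$ et en sommant contre $n^{-2s}$, le terme dominant fait appara\^{\i}tre $\zeta_\Q(2s+1)$ multipli\'e par le profil $w$, ce qui, apr\`es la double int\'egrale, donne exactement $\zeta_\Q(2s+1)F(s)$.

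Il reste \`a \'evaluer les int\'egrales de contour des profils fixes en $z$, puis leur transform\'ee de Mellin en $t$, afin d'en d\'eduire la structure m\'eromorphe en $s$: le comportement quand $t\to0$ de l'int\'egrale interne gouverne les p\^oles. Les profils logarithmique et constant, regroup\'es via \eqref{POLYNOMEEN}, produisent $A(s)$ et $B(s)$ et, apr\`es le calcul de Mellin o\`u interviennent $\Gamma'(1)=-\gamma$ et le p\^ole de $\Gamma$, la combinaison $\gamma A(s)-\frac{1}{s}A(s)+P(s)-B(s)$ affect\'ee du facteur $\frac{s}{\Gamma(s+1)}$; le terme $P(s)-B(s)$ y est analytique pr\`es de $0$ d'apr\`es le lemme \eqref{P-B}. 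Le reste $O(1/n^2)$, contr\^ol\'e uniform\'ement par la borne sur $\eta(\nu,z)$ de (4d), fournit une s\'erie $\sum_n O(1/n^2)\,n^{-2s}$ holomorphe pr\`es de $s=0$, constitutive de $h(s)$. Comme $a_n$ est un polyn\^ome de degr\'e $d=1$, $A(s)=\sum_n a_n n^{-2s}$ se comporte comme $\zeta_\Q(2s-1)$ et fournit l'unique p\^ole, en $s=1$; les facteurs $\frac{s}{\Gamma(s+1)}$ et $\frac{s^2}{\Gamma(s+1)}$ annulent le p\^ole \'eventuel en $s=0$ (notamment celui de $\zeta_\Q(2s+1)$ en $s=0$), d'o\`u l'holomorphie en ce point et la formule de l'\'enonc\'e.

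Le point le plus d\'elicat sera le contr\^ole uniforme permettant d'intervertir la sommation sur $n$ et la double int\'egration, et d'\'etablir que le reste $O(1/n^2)$ est bien analytique pr\`es de $s=0$: cela requiert le caract\`ere uniform\'ement born\'e en $\nu$ de $\eta(\nu,z)$ fourni par (4d), joint \`a la minoration \eqref{lowerbound} qui maintient le contour s\'epar\'e des z\'eros, ainsi qu'un d\'ecoupage soigneux du contour en une partie $|z|\gg1$ (o\`u s'applique le d\'eveloppement asymptotique (3)) et une partie born\'ee. Un second point technique est l'extraction des constantes pr\'ecises $\gamma$ et $-1/s$ dans l'int\'egrale de contour du profil $\log z$ contre $e^{-z^2t}$, l\`a o\`u interviennent l'analogue du lemme de \cite{Spreafico} et les asymptotiques de \cite{Voros}; il faut suivre le d\'eveloppement de la fonction Gamma incompl\`ete pour identifier correctement $\zeta(0)$ et $\zeta'(0)$.
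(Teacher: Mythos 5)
Votre plan suit essentiellement la d\'emarche du texte : repr\'esentation int\'egrale de chaque $\zeta_n$ via le th\'eor\`eme \eqref{repzeta}, remise \`a l'\'echelle sur le contour commun $\Lambda_c$ garantie par \eqref{lowerbound}, interversion somme--int\'egrale pour obtenir la repr\'esentation en $T(z,s)$, puis d\'ecomposition de $\log p_n(nz)$ \`a l'aide de l'hypoth\`ese (4) et des identit\'es \eqref{POLYNOMEEN} pour isoler $A$, $B$, $P$ (via le lemme \eqref{P-B}), le terme $\zeta_\Q(2s+1)F(s)$ et un reste analytique constituant $h$. Les points techniques que vous signalez comme restant \`a d\'etailler (contr\^ole uniforme pour l'interversion, d\'ecoupage du contour et de l'int\'egrale en $t$ pr\`es de $t=0$, extraction de $\gamma$ et de $-1/s$ par la fonction Gamma incompl\`ete) sont pr\'ecis\'ement ceux que la preuve du papier traite, par les m\^emes outils.
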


\begin{proof}
Commençons par noter qu'on a formellement:
\[
T(z,s)=\log \Bigl(\prod_{n=1}^\infty \prod_{k=1}^\infty \bigl(1-\frac{n^2z^2}{\la_{n,k}^2}  \bigr)^{\frac{1}{n^{2s}}}  \Bigr).
\]
Par hypothèse, il existe $N\gg1$ tel que
\[
\begin{split}
T(z,s)&=-\sum_{ n=1}^{ N-1}\frac{1}{n^{2s}}\log \biggl(\frac{G_n(inz)}{l_n (inz)^{e_n}}\biggr)-\biggl(\sum_{n=N}^\infty \frac{\log\al_n-\log (l_n n^{e_n})}{n^{2s}}+\frac{\log \rho(iz)}{n^{2s}} +\frac{e_n r(iz)}{n^{2s}}+\frac{w(iz)}{n^{2s+1}}+\mathrm{O}_{z}\biggl(\frac{1}{n^{2+2s}}\biggr)\biggl)\\
&=-\sum_{1\leq n\leq N-1}\frac{1}{n^{2s}}\log \biggl(\frac{G_n(inz)}{l_n (inz)^{e_n}}\biggr)-\biggl(\sum_{n=N}^\infty -\frac{b_n}{n^{2s}}+\frac{1}{n^{2s}}\log \rho(iz) +\frac{e_n }{n^{2s}}r(iz)+\frac{w(iz)}{n^{2s+1}}+\mathrm{O}_{z}\biggl(\frac{1}{n^{2+2s}}\biggr)\biggr)\\
&=-\sum_{1\leq n\leq N-1}\frac{1}{n^{2s}}\log \biggl(\frac{G_n(inz)}{l_n (inz)^{e_n}}\biggr)-\biggl(\sum_{n=N}^\infty \Bigl(\frac{w(iz)}{n^{2s+1}}-\frac{b_n}{n^{2s}}\Bigr)+\frac{1}{n^{2s}}\log \rho(iz) +\frac{e_n }{n^{2s}}r(iz)+\mathrm{O}_{z}\biggl(\frac{1}{n^{2+2s}}\biggr)\biggl)\\
&=-\sum_{1\leq n\leq N-1}\frac{1}{n^{2s}}\log \biggl(\frac{G_n(inz)}{l_n (inz)^{e_n}}\biggr)-\biggl(\sum_{n=N}^\infty \Bigl(\frac{w(iz)-w(0)}{n^{2s+1}}+O(\frac{1}{n^{2s+2}})\Bigr)+\frac{1}{n^{2s}}\log \rho(iz) +\frac{e_n }{n^{2s}}r(iz)\\
&+\mathrm{O}_{z}\biggl(\frac{1}{n^{2+2s}}\biggr)\biggl)\quad \text{par}\;\eqref{a1a}.\\
\end{split}
\]
(où $\mathrm{O}_{z}(\frac{1}{n^{2+2s}})$ est une fonction bornée en $z$ uniformément en $n$). Ce qui donne que
\begin{equation}\label{Tp}
T(z,s)+\sum_{1\leq n\leq N-1}\frac{1}{n^{2s}}\log \biggl( \frac{G_n(inz)}{l_n z^{e_n}}\biggr)=-\sum_{n=N}^\infty
\Biggl(\frac{w(iz)-w(0)}{n^{2s+1}}+\frac{1}{n^{2s}}\log \rho(iz) +\frac{e_n
}{n^{2s}}r(iz)+\frac{w(iz)}{n^{2s}}+\mathrm{O}_{z}\biggl(\frac{1}{n^{2+2s}}\biggr)\Biggr),
\end{equation}
pour tout $z$ dans un voisinage ouvert  de $\Lambda_c$. Comme la suite $ \bigl(\frac{e_n}{n^{2s}}\bigr)_{n\in \N^\ast}$ est
équivalente à $\bigl(\frac{1}{n^{2s-d}}\bigr)_{n\in \N^\ast}$, on déduit que sur tout ensemble borné en $z$,  $T(z,s)$ converge
normalement pour  tout $\mathrm{Re}(s)>\frac{d+1}{2}$.\\

On a pour tout $n\in \N^\ast$, $\zeta_n$ admet une représentation intégrale (voir \eqref{repzeta}):
\[
\zeta_n(s)=\frac{s^2}{\Gamma(s+1)}\int_0^\infty t^{s-1}\int_{\Lambda_{c_n}}\frac{e^{-z^2t}}{z}\log \frac{G_n(iz)}{l_n z^{e_n}}dzdt\quad \forall\, n\in \N^\ast,
\]
où $c_n$ est un réel  quelconque dans $]0, \la_{n,1}[$. On peut écrire
 \begin{align*}
\zeta_n(s)&=\frac{s^2}{\Gamma(s+1)}\int_0^\infty t^{s-1}\int_{\frac{1}{n}\Lambda_{c_n}}\frac{e^{-n^2z^2t}}{z}\log
\frac{G_n(inz)}{l_n z^{e_n}}dzdt\\ &=\frac{1}{n^{2s}}\frac{s^2}{\Gamma(s+1)}\int_0^\infty t^{s-1}\int_{\frac{1}{n}\Lambda_{c_n}}\frac{e^{-z^2t}}{z}\log
\frac{G_n(inz)}{l_n z^{e_n}}dzdt.
 \end{align*}
En utilisant l'holomorphie de $\log p_n$ loin de l'axe réel (voir \eqref{repzeta}), on obtient:
 \begin{align*}
 \sum_{k\geq 1}\frac{1}{\la_{n,k}^{2s}}
 &=\frac{s^2}{\Gamma(s+1)}\int_0^\infty t^{s-1}\int_{\Lambda_{c_n}}\frac{e^{-z^2t}}{z}\frac{1}{n^{2s}}\log \frac{G_n(inz)}{l_n z^{e_n}}dzdt,
 \end{align*}
(rappelons que $\Lambda_{c_n}=\bigl\{z\,|\, |\arg(z-c_n)|=\frac{\pi}{8} \bigr\}$).\\

 D'après \eqref{lowerbound}, on a
 \begin{equation}\label{zetaNNN}
 \sum_{k\geq 1}\frac{1}{\la_{n,k}^{2s}}
 =\frac{s^2}{\Gamma(s+1)}\int_0^\infty t^{s-1}\int_{\Lambda_{c}}\frac{e^{-z^2t}}{z}\frac{1}{n^{2s}}\log \frac{G_n(inz)}{l_n z^{e_n}}dzdt,\quad \forall n\in \N^\ast.
 \end{equation}

On se propose de montrer que $\zeta$ admet une représentation intégrale en fonction de $T$. Vue l'expression \eqref{Tp}, il suffit d'étudier les fonctions en $s$ suivantes:

\begin{align*}
 &\int_0^\infty t^{s-1}\int_{\Lambda_c}\Bigl|\frac{e^{-z^2 t}}{z}r(iz)dz\Bigr|dt,\\
&\int_0^\infty t^{s-1}\int_{\Lambda_c}\Bigl|\frac{e^{-z^2 t}}{z}\log \rho(iz)dz\Bigr|dt,\\
&\int_0^\infty t^{s-1}\int_{\Lambda_c}\Bigl|\frac{e^{-z^2 t}}{z}w(iz)dz\Bigr|dt,\\
&\int_0^\infty t^{s-1}\int_{\Lambda_c}\Bigl|\frac{e^{-z^2 t}}{z}\mathrm{O}_{z}(\frac{1}{n^{2}})dz\Bigr|dt.\\
\end{align*}
La dernière fonction est bornée pour tout $\mathrm{Re}(s)\geq \frac{d+1}{2}$, puisque par hypothèse $\mathrm{O}_{z}(\frac{1}{n^2} )$ est bornée en $z$ uniformément en $n\gg1$. Notons il suffit d'étudier la première fonction. En effet, par hypothèses, on peut trouver $c_1$ et $c_2$ deux constantes telles que
\[
 |w(z)|\leq c_1|r(z)|,\quad |\log\rho(z)|\leq c_2 |r(z)|\quad \forall |z|\gg1.
\]
On note par $\Lambda_{c,\geq R}=\Lambda_{c}\cap\bigl\{|z|\geq R \bigr\}$ et $\Lambda_{c,\leq R}=\Lambda_{c}\cap\bigl\{|z|\leq R \bigr\}$ et  soit $s>\frac{1}{2}$ fixé. \\

Rappelons que  $r(z)=O(|z|)$ pour $|z|\gg 1$, alors pour
  $R\gg1$ il existe $C>0$ tel que
{\allowdisplaybreaks
\begin{align*}
\int_{0}^1 t^{s-1}\int_{\Lambda_{c,\geq R}}\Bigl| \frac{e^{-tz^2  }}{z}r(iz)dz dt\Bigr|&\leq
C\int_{0}^1 t^{s-1}\int_{v\geq R} e^{-t\mu v^2}  dvdt\\
&=C\int_{0}^1 t^{s-1-\frac{1}{2}}\int_{v\geq \sqrt{t}R} e^{-\mu v^2}  dvdt\\
&\leq C\int_{0}^1 t^{s-1-\frac{1}{2}}\int_{0}^\infty  e^{-\mu v^2}dvdt \\
&\leq \frac{C'}{s-\frac{1}{2}},\quad\text{où}\; C'\in \R,
\end{align*}}
et
\begin{align*}
\int_1^\infty \int_{\Lambda_{c,\geq R}}t^{s-1}\Bigl|\frac{e^{-tz^2 }}{z}r(iz)dz\Bigr| dt&\leq \int_{v\geq R}\int_1^\infty t^{s-1}e^{-t\mu v^2}dv dt\\
&=\int_{v\geq R}\frac{1}{\mu^{2s} v^{2s}}\int_{\mu v^2}^\infty t^{s-1}e^{-t}dt dv\\
&\leq \frac{\Gamma(s)}{s-\frac{1}{2}}\frac{1}{R^{2s-1}},
\end{align*}
par suite,
\[
\int_0^\infty \int_{\Lambda_{c,\geq R}}t^{s-1}\frac{e^{-tz^2 }}{z}r(iz)dz dt=O\biggl(\frac{1}{\mathrm{Re}(s)-\frac{1}{2}}\biggr)+O\biggl(\frac{\Gamma(\mathrm{Re}(s))}{\mathrm{Re}(s)-\frac{1}{2}}\frac{1}{R^{2\mathrm{Re}(s)-1}} \biggr)\quad \forall\, \mathrm{Re}(s)>\frac{1}{2}.
\]

Sur $\Lambda_{c,\leq R}$, la fonction $r$ est bornée puisque continue, donc
\begin{align*}
 \int_{0}^\infty t^{s-1}\int_{\Lambda_{c,\leq R}}\Bigl| \frac{e^{-tz^2  }}{z}r(iz)dz dt\Bigr|&\leq C\int_0^\infty t^{s-1}\int_{c\leq v\leq R} e^{-t\mu  v^2 }dv dt \\
&=C(R-c)\int_0^\infty t^{s-1} e^{-t\mu  c^2 } dt \\
&=\frac{C(R-c)}{(\mu c)^s}\Gamma(s).
\end{align*}
On conclut
\begin{align*}
\int_{0}^\infty t^{s-1}\int_{\Lambda_c}\Bigl| \frac{e^{-tz^2  }}{z}r(iz)dz dt\Bigr|=O\biggl(\frac{\Gamma(\mathrm{Re}(s))}{(\mu c)^{\mathrm{Re}(s)}}\biggr)+O\biggl(\frac{1}{\mathrm{Re}(s)-\frac{1}{2}}\biggr)\quad \forall \, \mathrm{Re}(s)>\frac{1}{2}.
\end{align*}

On montre les mêmes inégalités pour $w$ et $\log \rho$.\\

Rappelons que $d$ est le degré du polynôme $e_n$.
On conclut que pour tout $0<\tau\ll 1$ et $U$  un  ouvert borné en  $s$ vérifiant $\mathrm{Re}(s)>\frac{d+1}{2}+\tau, \forall s\in U$, il existe  une constante $M$ telle que $\forall s\in U$
{\allowdisplaybreaks
\begin{align*}
&\biggl|\int_{0}^\infty \frac{t^{s-1}}{2\pi i}\int_{ \Lambda_c} \frac{e^{-z^2 t}}{-z} \biggl(\sum_{n=N}^\infty \frac{w(iz)-w(0)}{n^{2s+1}}+\frac{1}{n^{2s}}\log \rho(iz) +\frac{e_n r_n(iz)}{n^{2s}}+\frac{w(iz)}{n^{2s}}+\mathrm{O}_{z}\Bigl(\frac{1}{n^{2+2s}}\Bigr)\biggl)dz dt\biggr|\leq\\
&\int_{0}^\infty \frac{t^{\mathrm{Re}(s)-1}}{2\pi i}\int_{ \Lambda_c}\biggl| \frac{e^{-z^2 t}}{-z} \biggl(\sum_{n=N}^\infty  \frac{w(iz)-w(0))}{n^{2\mathrm{Re}(s)+1}}+\frac{1}{n^{2\mathrm{Re}(s)}}\log \rho(iz)+\frac{e_nr_n(iz)}{n^{2\mathrm{Re}(s)}}+\frac{w(iz)}{n^{2\mathrm{Re}(s)}}+\mathrm{O}_{\sqrt{z}}\Bigl(\frac{1}{n^{2+2\mathrm{Re}(s)}}\Bigr)\biggr)dz\biggr| dt\leq\\
&M\biggl(\sum_{n\geq N}\frac{1}{n^{2\mathrm{Re}(s)+1}}+ \frac{1}{n^{2\mathrm{Re}(s)-d}}+\frac{1}{n^{2\mathrm{Re}(s)}}+O\Bigl(\frac{1}{n^{2\mathrm{Re}(s)+2}}\Bigr)\biggr).
\end{align*}}
A partir de cela et de \eqref{Tp}, on obtient
\begin{align*}
&\biggl|\frac{s^2}{\Gamma(s+1)}\int_{0}^\infty \frac{t^{s-1}}{2\pi i}\int_{ \Lambda_c} \frac{e^{-z^2 t}}{-z} \Bigl(T(z,s)+\sum_{n=1}^{N-1}\frac{1}{n^{2s}}\log \frac{G_n(inz)}{l_n z^{e_n}}\Bigr)dz dt\biggr| \\
&=\biggl|\frac{s^2}{\Gamma(s+1)}\int_{0}^\infty \frac{t^{s-1}}{2\pi i}\int_{ \Lambda_c} \frac{e^{-z^2 t}}{-z}T(z,s)dzdt+\sum_{n=1}^{N-1}\frac{s^2}{\Gamma(s+1)}\int_{0}^\infty \frac{t^{s-1}}{2\pi i}\int_{ \Lambda_c} \frac{e^{-z^2 t}}{-z}\frac{1}{n^{2s}}\log \frac{G_n(inz)}{l_n z^{e_n}}dz dt\biggr| \\
&=\biggl|\frac{s^2}{\Gamma(s+1)}\int_{0}^\infty \frac{t^{s-1}}{2\pi i}\int_{ \Lambda_c} \frac{e^{-z^2 t}}{-z}T(z,s)dzdt-\sum_{n=1}^{N-1}\zeta_n(s)\biggr|\quad \text{d'après}\;\eqref{zetaNNN} \\
&\leq M\biggl(\sum_{n\geq N}\frac{1}{n^{2\mathrm{Re}(s)+1}}+ \frac{1}{n^{2\mathrm{Re}(s)-d}}+\frac{1}{n^{2\mathrm{Re}(s)}}+O(\frac{1}{n^{2\mathrm{Re}(s)+2}})\biggr)dt,
 \end{align*}
 C'est à dire, on a montré qu'il existe $E(N,s)$ une fonction en $s$ et $N\gg1$ qui converge normalement vers $0$ uniformément sur tout ouvert de la forme $\{\mathrm{Re}(s)>\frac{d+1}{2}+\eps\}$ et $\eps$ un réel positif non nul quelconque, telle que

\begin{align*}
 \frac{s^2}{\Gamma(s+1)}\int_{0}^\infty \frac{t^{s-1}}{2\pi i}\int_{ \Lambda_c} \frac{e^{-z^2 t}}{-z}T(z,s)dzdt=\sum_{n=1}^{N-1}\zeta_n(s)+ E(N,s) \quad \forall N\gg1\;\forall\, \mathrm{Re}(s)>\frac{d+1}{2}.
\end{align*}
Récapitulons, on a prouvé que:
\begin{enumerate}
\item
\[
\zeta(s)=\sum_{n=1}^\infty \sum_{k=1}^\infty \frac{1}{\la_{n,k}^{2s}}=\frac{s^2}{\Gamma(s+1)}\int_{0}^\infty \frac{t^{s-1}}{2\pi i}\int_{ \Lambda_c} \frac{e^{-z^2 t}}{-z} T(z,s)dz dt\quad \forall\; \mathrm{Re}(s)>\frac{d+1}{2},
\]
\item  $\zeta$ est holomorphe sur $\mathrm{\mathrm{Re}}(s)>\frac{d+1}{2}$, en effet cela découle de  la dernière inégalité qui montre que $\bigl(\sum_{n=1}^N\zeta_n\bigr)_{N\in \N^\ast}$ converge normalement vers $\zeta$ sur tout ouvert de la forme $\{s\in \C\,|\;\mathrm{Re}(s)>\frac{d+1}{2}+\eps\}$, avec $\eps>0$ et  que  $\zeta_n$ est holomorphe sur $\bigl\{s\in \C\,|\; \mathrm{Re}(s)>\frac{1}{2}\bigr\}$ pour tout $n\in \N^\ast$.
\item  $\zeta$ a un pôle en $s=\frac{d+1}{2}$.\\

\end{enumerate}

%\[
%T(\la,s)=-\sum_{1\leq n\leq N-1}\frac{1}{n^{2s}}\log \bigl(\frac{G_n(n\sqrt{-\la})}{c_n (nz)^n}\bigr)-\bigl(\sum_{n=N}^\infty \frac{r_n(\sqrt{-\la})}{n^{2s-1}}+\frac{w(\sqrt{-\la})}{n^{2s}}+\mathrm{O}_{\sqrt{-\la}}(\frac{1}{n^{2+2s}})\bigl)
%\]
On se propose maintenant d'étudier la fonction $\zeta$ au voisinage de $s=0$.  On va scinder l'intégrale par rapport $t$ en deux intervalles $]0,\eps[$ et $]\eps,\infty[$ où $\eps>0$.\\ %On notera que sur $]0,\eps[$ l'étude de cette intégrale est délicate.\\

  On pose,
\[
q_n(z,s)=-\frac{1}{n^{2s}}\log p_n(nz)-\frac{1}{n^{2s+1}}w(z).
\]
et
\[
P(z,s)=\sum_{n=1}^\infty q_n(z,s).
\]

%On a clairement,
%\[
%\sum_{k\geq 1}e^{-t\frac{\la_{n,k}^2}{n^2}}=\int_{\Lambda_c} zt e^{-z^2t }\log \frac{G_n(inz)}{l_n(nz)^{e_n}}dz
%\]

 %aussi
%\[
%\sum_{k\geq 1}\frac{1}{\la_{n,k}^{2s}}=\frac{s^2}{\Gamma(s+1)}\int_0^\infty t^{s-1}\int_{\Lambda_c}\frac{e^{-z^2t}}{z}\frac{1}{n^{2s}}\log \frac{G_n(inz)}{l_n(nz)^{e_n}}dz
%\]
Rappelons qu'on a posé $p_n(z)= \frac{ G_n(iz)}{l_n(nz)^{e_n}}$. On a
\[
\int_{\Lambda_{-c}}  \frac{e^{-z^2 t}}{-z} q_n(z,s)dz=\int_{C_c}\frac{e^{-z^2 t}}{-z} q_n(z,s)dz+\int_{\Lambda_c}\frac{e^{-z^2 t}}{-z} q_n(z,s)dz,
\]
où $\Lambda_{-c}:=\bigl\{\la\in \C\,|\, |\arg(\la+c)|=\eps\bigr\}$ et $C_c$ est le cercle centré en zéro de rayon inférieur à $c$). Cela  résulte de l'holomorphie de $z\mapsto \log G_n(iz)$  sur un ouvert qui ne contient pas  $]-\infty,-c]\cup [c,\infty[$. Par Cauchy, on obtient
\[
\int_{C_c}\frac{e^{-z^2 t}}{-z} q_n(z,s)dz=q(0,s)=\frac{w(0)}{n^{2s+1}}.
\]
On a
\[
\begin{split}
\int_{\Lambda_{-c}}\frac{e^{-z^2t}}{z}\log p_n(nz)dz&=\int_{\sqrt{t}\Lambda_{-c}}\frac{e^{-z^2}}{z}\log p_n\biggl(\frac{nz}{\sqrt{t}}\biggr)dz\\
&=\int_{\Lambda_{-c}}\frac{e^{-z^2}}{z}\log p_n\biggl(\frac{nz}{\sqrt{t}}\biggr)dz.
\end{split}
\]
la deuxième égalité résulte du fait que $\log p_n$ est holomorphe sur l'ensemble connexe de bord $\Lambda_{-c}\cup \sqrt{t}\Lambda_{-c}$ avec
 $0<t<1$.\\

Fixons $n\geq 1$. On décompose $\Lambda_{-c}:=\bigl\{z \in \C \,|\, |\arg(z+c)|=\eps \bigr\}$ en deux ensembles
$\Lambda_{-c}^+=(\R+i\R^+)\cap \Lambda_{-c}$ et $\Lambda_{-c}^-=(\R+i\R^-)\cap \Lambda_{-c}$, donc si $z\in
\Lambda_{-c}^{+}$ alors $\eps-\frac{\pi}{2} <\arg(e^{-i\frac{\pi}{2}}z)<\frac{\pi}{2}$ et si $z\in \Lambda_{-c}^{-}$ on
aura $-\frac{\pi}{2}<\arg(e^{i\frac{\pi}{2}}z)< -\eps -\frac{\pi}{2} $, donc sur chaque branche, le développement
asymptotique de $\log G_n$ est valable.

On choisit $\eps\ll 1$ tel que pour tout $0<t\leq \eps$ on a $\frac{|z|}{|\sqrt{t}|}\gg1$ pour tout $z\in \Lambda_{-c}$, dans ce cas on peut utiliser le développement asymptotique de $G_n$ et obtenir que
{\allowdisplaybreaks
\begin{align*}
\frac{1}{2\pi i}\int_{\Lambda_{-c}}\frac{e^{-z^2}}{-z}\log p_n&\Bigl(\frac{nz}{\sqrt{t}}\Bigr)dz=\frac{1}{2\pi i}\int_{\Lambda_{-c}}\frac{e^{-z^2}}{-z}\log \biggl(\frac{G_n(\frac{e^{-i\frac{\pi}{2}}nz}{\sqrt{t}})}{l_n (\frac{nz}{\sqrt{t}})^{e_n}}\biggr)dz \\
&=\frac{1}{2\pi i}\int_{\Lambda_{-c}^+}\frac{e^{-z^2}}{-z}\log \biggl(\frac{G(\frac{e^{-i\frac{\pi}{2}}nz}{\sqrt{t}})}{l_n (\frac{nz}{\sqrt{t}})^{e_n}}\biggr)dz+\frac{1}{2\pi i}
\int_{\Lambda_{-c}^-}\frac{e^{-z^2}}{z}\log\biggl( \frac{ G(\frac{e^{+i\frac{\pi}{2}}nz}{\sqrt{t}})}{l_n (\frac{nz}{\sqrt{t}})^{e_n}}\biggr)dz\\
&=\frac{1}{2\pi i}\int_{\Lambda_{-c}}\frac{e^{-z^2}}{z}\biggl(\frac{niz}{\sqrt{t}}+a_n \log\bigl((\frac{iz}{\sqrt{t}})^2\bigr)+b_n+\frac{c_n\sqrt{t}}{iz}+O\bigl(\frac{t}{(nz)^2}\bigr) \biggr) dz\\
&=0+ \frac{a_n}{2\pi i}\int_{\Lambda_{-c}}\frac{e^{-z^2}}{z}\log(-z^2)-\frac{a_n}{2\pi i} \int_{\Lambda_{-c}}\frac{e^{-z^2}}{z}\log ({t})\\
&+\frac{b_n}{2\pi i} \int_{\Lambda_{-c}}\frac{e^{-z^2}}{z}dz+\frac{c_n\sqrt{t}}{2\pi i} \int_{\Lambda_{-c}}\frac{e^{-z^2}}{z^2}dz+ \int_{\Lambda_{-c}}\frac{e^{-z^2}}{z}O\Bigl(\frac{t}{n^2z^2}\Bigr)dz\\
&=-{\gamma}a_n-\log(t){a_n}+b_n+0+O\Bigl(\frac{t}{n^2}\Bigr).
\end{align*}
}
%(\textbf{Encore une fois, il faut supposer que $\log G_n$ a un développementasymptotique comme dans Voros (qui est vérifié par les bessel), le fait que le reste est en $\frac{1}{n^2}$ résulte de cette expression et la formule ci-dessous..})\\

où on a utilisé que  $\frac{1}{2\pi i}\int_{\Lambda_{-c}}\frac{e^{-z^2}}{z}dz=1$, $\frac{1}{2\pi
i}\int_{\Lambda_{-c}}\frac{e^{-z^2}}{z^2}dz=0$ et que  $\frac{1}{2\pi
i}\int_{\Lambda_{-c}}\frac{e^{-z^2}}{z}\log(-z^2)dz=-\gamma$. La dernière intégrale  se calcule en dérivant par
rapport à $a$
 l'identité suivante:
\[
\frac{1}{\Gamma(a)}=-\frac{1}{2\pi i}\int_{D}\frac{ze^{-z^2}}{(-z^2)^a}dz, \quad \forall a\in \C,
\]
sachant que
\[
-\Gamma'(1)=\gamma, \quad \text{la constante d'Euler},
\]

avec $D$ est un contour qui entoure strictement la demi-droite positive. On peut  montrer  cette formule en suivant la preuve de la représentation intégrale de l'inverse de la fonction Gamma, voir par exemple  \cite[§ 12.22]{Whittaker}.\\

En regroupant tout cela, on obtient:
{\allowdisplaybreaks
\begin{align*}
\int_0^{\eps} \frac{t^{s-1}}{2\pi i} \int_{\Lambda_c}& \frac{e^{-z^2 t}}{-z} \log p_n(nz)dz dt=-{\gamma}a_n\int_0^\eps t^{s-1}dt-a_n\int_0^\eps t^{s-1}\log(t)dt+b_n\int_0^\eps t^{s-1}dt+\int_0^\eps t^{s-1}O(\frac{t}{n^2})dt\\
&=-\frac{\gamma}{s}\eps^sa_n-\frac{\eps^s\log \eps}{s}a_n+\frac{1}{s^2}\eps^s a_n+\frac{1}{s}\eps^s b_n+\int_0^\eps t^{s}O(\frac{1}{n^2})dt\\
&=-\frac{\gamma}{s}a_n+\frac{1}{s} b_n+\frac{1}{s^2}a_n+\Bigl(\gamma\frac{1-\eps^s}{s}a_n
-\frac{s\eps^s \log \eps-\eps^s+1}{s^2}a_n +\frac{\eps^s-1}{s}b_n+\int_0^\eps t^{s}O(\frac{1}{n^2})dt\Bigr).\\
\end{align*}
}
On vérifie que:
\[
\int_0^\eps\frac{t^{s-1}}{2\pi i}\int_{C_c}\frac{e^{-z^2 t}}{-z} q_n(z,s)dz=q(0,s)=\frac{w(0)\eps^s}{sn^{2s+1}}=\frac{w(0)}{sn^{2s+1}}+(\eps^s-1)\frac{w(0)}{sn^{2s+1}}.
\]
Par suite,
{\allowdisplaybreaks
\begin{align*}
\int_0^{\eps} \frac{t^{s-1}}{2\pi i} \int_{\Lambda_c} &\frac{e^{-z^2 t}}{-z} q_n(z,s)dz dt
=\frac{\gamma}{s}\frac{a_n}{n^{2s}}+\frac{w(0)}{sn^{2s+1}}- \frac{b_n}{sn^{2s}}-\frac{1}{s^2}a_n\\
&-\Bigl(\gamma\frac{1-\eps^s}{s}\frac{a_n}{n^{2s}}
-\frac{s\eps^s \log \eps-\eps^s+1}{s^2}\frac{a_n}{n^{2s}}+\frac{1}{n^{2s}}\frac{\eps^s-1}{s}\bigl(b_n-\frac{w(0)}{n}\bigr)+\int_0^\eps t^{s}O(\frac{1}{n^{2s+2}})dt\Bigr).\\
\end{align*}
}

On note par les mêmes notations les prolongements analytiques de $A$ et $P-B$  sur la bande $0<\mathrm{Re}(s)<\frac{d+1}{2}$, donc

\begin{align*}
\frac{s^2}{\Gamma(s+1)}&\int_0^\eps \frac{t^{s-1}}{2\pi i} \int_{\Lambda_c} \frac{e^{-z^2 t}}{-z}P(z,s)dz dt=\frac{s}{\Gamma(s+1)}\Bigl(\gamma A(s)-B(s)+P(s)-\frac{1}{s}A(s)\Bigr)\\
&+\frac{s^2}{\Gamma(s+1)}\Biggl(\gamma \frac{1-\eps^s}{s}A(s)
-\frac{s\eps^s \log \eps-\eps^s+1}{s^2} A(s)+s(\eps^s-1)\bigl(B(s)-P(s)\bigr)\\
&+\int_0^\eps t^{s}O(\zeta_\Q(2s+2))dt\Biggr)\\
\end{align*}
On pose $g$, la fonction suivante:
\[g(s)=\gamma \frac{1-\eps^s}{s}A(s)
-\frac{s\eps^s \log \eps-\eps^s+1}{s^2} A(s)+s(\eps^s-1)\bigl(B(s)-P(s)\bigr)
+\int_0^\eps t^{s}O(\zeta_\Q(2s+2))dt,\]
alors $g$ est analytique en un voisinage ouvert de $s=0$, puisqu'on sait d'après \eqref{P-B} que $P-B$ est analytique en $0$ et que $A$ l'est aussi (par hypothèses sur les coefficients $a_n$) et on montre par exemple à l'aide d'un développement limité que $s\mapsto \frac{1-\eps^s}{s} $ et $s\mapsto \frac{s\eps^s \log \eps-\eps^s+1}{s^2}$ sont analytique sur $\C$, pour $s\mapsto \int_0^\eps t^{s}O(\zeta_\Q(2s+2))dt$ il suffit de remarquer que c'est une limite  d'une suite de fonctions analytiques pour la convergence normale au voisinage de $s=0$.\\

On écrit donc,
\begin{align*}
\frac{s^2}{\Gamma(s+1)}&\int_0^\eps \frac{t^{s-1}}{2\pi i} \int_{\Lambda_c} \frac{e^{-z^2 t}}{-z}P(z,s)dz dt=\frac{s}{\Gamma(s+1)}\Bigl(\gamma A(s)-B(s)+P(s)-\frac{1}{s}A(s)\Bigr)+\frac{s^2}{\Gamma(s+1)}g(s),
\end{align*}
on conlcut que cette fonction se prolonge en une fonction holomorphe au voisinage de $s=0$.\\

Montrons maintenant que le terme suivant se prolonge en une  fonction analytique en $s$ au voisinage de $s=0$:
\[
\begin{split}
\int_\eps^\infty \frac{t^{s-1}}{2\pi i} \int_{\Lambda_c} \frac{e^{-z^2 t}}{-z} P(z,s)dz dt,
\end{split}
\]
il suffit d'étudier
\[
\int_\eps^\infty \frac{t^{s-1}}{2\pi i} \int_{\Lambda_c} \frac{e^{-z^2 t}}{z}\sum_{ n\geq N}\frac{1}{n^{2s}}\log \Bigl(\frac{G_n(inz )}{l_n (nz)^n}\Bigr),
\]
pour $N\gg1$, puisque la  somme partielle peut être étudier par la théorie de \cite{Voros}. \\

Par définition de $P(z,s)$ et par les propriétés des $G_n$, on a pour tout $\mathrm{Re}(s)>1$
\[
P(z,s)=\sum_{n=1}^Nq_n(z,s)-\Biggl(\sum_{n=N}^\infty O\Bigl(\frac{1}{n^{2s+2}}\Bigr)+\frac{1}{n^{2s}}\log \rho(iz) +\frac{e_n }{n^{2s}}r(iz)+\frac{1}{n^{2s}}\mathrm{O}_{z}\Bigl(\frac{1}{n^{2}}\Bigr)\Biggl)\]
où $\eta(n,iz)=\mathrm{O}_{z}(\frac{1}{n^{2}})$ est par hypothèse bornée en $z$ uniformément en $n>N$. On doit étudier les séries de termes généraux suivants:
\begin{align*}
&\int_\eps^\infty \frac{t^{s-1}}{2\pi i} \int_{\Lambda_c} \frac{e^{-z^2 t}}{z}\frac{r(iz)}{n^{2s-d}}dz,\\
&\int_\eps^\infty \frac{t^{s-1}}{2\pi i} \int_{\Lambda_c} \frac{e^{-z^2 t}}{z}\frac{\log \rho(iz)}{n^{2s}}dz,\\
&\int_\eps^\infty \frac{t^{s-1}}{2\pi i} \int_{\Lambda_c} \frac{e^{-z^2 t}}{z}\frac{\eta(n,iz)}{n^{2s+2}}dz.\\
\end{align*}

On déduit directement  que la série de terme général: $\int_\eps^\infty \frac{t^{s-1}}{2\pi i} \int_{\Lambda_c} \frac{e^{-z^2 t}}{z}\frac{\eta(n,iz)}{n^{2s+2}}dz$ est normalement convergente sur un voisinage de $s=0$. Pour les deux termes restants il suffit d'étudier le premier terme.  On note par $\zeta_\Q(2s-1)$ le prolongement analytique de cette $\sum_{k=1}^\infty\frac{1}{k^{2s-1}}$ au voisinage de $0$.

 Soit  $s$ dans un voisinage de $0$ et on considère
\[
\begin{split}
&\int_\eps^\infty \frac{t^{s-1}}{2\pi i} \int_{\Lambda_c} \frac{e^{-z^2 t}}{z}r(iz)\Bigl(\zeta_\Q(2s-1)-\sum_{n=1}^{N-1}\frac{1}{n^{2s+1}} \Bigr)dzdt,\\
\end{split}
\]
On conclut que c'est une fonction analytique en utilisant l'inégalité suivante valable sur une voisingae de $s=0$:
\begin{align*}
\int_\eps^\infty \int_{\Lambda_{c,\geq R}}\Bigl| t^{s-1}\frac{e^{-tz^2 }}{z}r(iz)dz\Bigr| dt&\leq \int_{v\geq R}\int_\eps^\infty t^{\mathrm{Re}(s)-1}e^{-t\mu v^2}dv dt\\
&=\int_{v\geq R}\frac{1}{\mu^{2\mathrm{Re}(s)} v^{2\mathrm{Re}(s)}}\int_{\eps \mu v^2}^\infty t^{\mathrm{Re}(s)-1}e^{-t}dt dv<\infty.\\
\end{align*}
Par conséquent,
\[
\begin{split}
\int_\eps^\infty \frac{t^{s-1}}{2\pi i} \int_{\Lambda_c} \frac{e^{-z^2 t}}{-z} P(z,s)dz dt
\end{split}
\]
est analytique au voisinage de $s=0$.\\

Etudions maintenant la fonction  suivante:
\[
F(s)=\frac{s^2}{\Gamma(s+1)}\zeta_\Q(2s+1)\int_0^\infty \frac{t^{s-1}}{2\pi i} \int_{\Lambda_c} \frac{e^{-z^2 t}}{z}w(iz)dzdt,
\]
au voisinage de $s=0$. On a
\begin{align*}
F(s)&= \frac{s^2}{\Gamma(s+1)}\zeta_\Q(2s+1)\int_0^\infty t^{s-1}\int_{\Lambda_c}\frac{e^{-z^2t}}{z}w(iz)dzdt\\
&=\frac{s^2}{\Gamma(s+1)}\zeta_\Q(2s+1)\int_{\Lambda_c}\frac{\Gamma(s)}{z^{2s+1}}w(iz)dz\\
&=\frac{s}{\Gamma(s+1)}\zeta_\Q(2s+1)\int_{\Lambda_c}\frac{\Gamma(s+1)}{z^{2s+1}}w(iz)dz.
\end{align*}
puisque on a supposé que $w(z)=O(\frac{1}{z})$ pour $|z|\gg1$. Toujours avec cet argument et le fait que
$\zeta_\Q(2s+1)=\frac{1}{2s}+\gamma+o(1)$ pour $|s|\ll 1$, on conclut que   $F$ s'étend  analytiquement au voisinage de $s=0$.\\

On pose maintenant:
\[
h(s)=g(s)+\int_\eps^\infty \frac{t^{s-1}}{2\pi i} \int_{\Lambda_c} \frac{e^{-z^2 t}}{-z} P(z,s)dz dt
\]
alors $h$ est analytique au voisinage de $s=0$, et

\[
\begin{split}
\zeta(s)&=\frac{s}{\Gamma(s+1)}\bigl[\gamma A(s)-\frac{1}{s}A(s)+P(s)-B(s) \bigr]
+\frac{s^2}{\Gamma(s+1)}\zeta_\Q(2s+1)F(s)\\
&+\frac{s^2}{\Gamma(s+1)}h(s),
\end{split}
\]
c'est à dire que $\zeta$ est prolongeable analytiquement au voisinage de $s=0$, ce qui termine la preuve du théorème.

\end{proof}

\begin{remarque}
\rm{ lorsque $w(z)$ est un polynôme en $q$ avec $q=\frac{1}{\sqrt{1+z^2}}$ alors le calcul de cette intégrale se ramène à ce genre d'intégrales bien connu, $\forall a>0$:
 \begin{align*}
\frac{1}{2\pi i}\int_{\Lambda_c} \frac{e^{-z^2 t}}{-z} \bigl(q(iz)\bigr)^a dz&=\frac{1}{2\pi i}\int_{\Lambda_c} \frac{e^{-z^2 t}}{-z} \frac{1}{(1-z^2)^a}dz\\
&= \frac{1}{2\pi i}\int_{\Lambda_c} \frac{e^{-\lambda t}}{-\la} \frac{1}{(1-\la)^a}d\la\\
 &= - \frac{1}{2\pi i}e^{-t} \int_{\Lambda_c-1}\frac{e^{-u t}}{1+u}\frac{1}{(-u)^a}d\lambda\\
&=\frac{1}{\pi} \sin(\pi a)\Gamma(1-a)\Gamma(a,t).
\end{align*}
Alors,
\[
\begin{split}
\int_0^\infty \frac{t^{s-1}}{2\pi i}\int_{\Lambda_c} &\frac{e^{-z^2 t}}{-z} \frac{1}{(1+(iz)^2)^a}d\la=\frac{1}{\pi} \sin(\pi a)\Gamma(1-a)\int_{0}^\infty t^{s-1}\Gamma(a,t)dt\\
&=\frac{1}{\pi} \sin(\pi a)\Gamma(1-a)\int_0^\infty \int_t^{\infty} u^{a-1}e^{-u}du\\
&=\frac{1}{\pi} \sin(\pi a)\Gamma(1-a)\biggl( \Bigl[\frac{1}{s}t^s \int_t^\infty u^{a-1}e^{-u}du\Bigr]_0^\infty+\frac{1}{s}\int_0^\infty t^{s+a-1} e^{-t}dt \biggr)\\
&=\frac{1}{\pi s} \sin(\pi a)\Gamma(1-a)\Gamma(s+a).
\end{split}
\]
}
\end{remarque}

\begin{remarque}
\rm{
La fonction $\theta(t,s):=\sum_{n=1}^\infty \frac{1}{n^{2s}}\theta_n(t)$ a joué le rôle de fonction thêta associée à la famille $\bigl\{\la^{2}_{n,k}\,\bigl|\, k,n\geq 1 \bigr\}$, mais on peut se demander si la fonction
 \[\theta(t):=\sum_{n\geq 1}\sum_{k\geq 1}e^{-\la_{n,k}^2t}\]
est une fonction Thêta au sens de \cite{Voros}? On en donnera une réponse partielle:  on va  montrer sous une condition supplémentaire que
\[
\lim_{t\mapsto 0} t\theta(t)<\infty.
\]
On suppose donc que $\la^2_{n,1}<\la_{n+1,1}^2<\la_{n,2}^2<\la_{n+1,2}^2<\ldots$, $\forall n\geq 1$, (ce qui est par exemple  le cas pour les zéros de fonctions de Bessel ainsi que  les dérivées premières et secondes) ce qui nous donne  que
\[
\widetilde{\theta}_{n+1}(t)\leq  \widetilde{\theta}_{n}(t)\leq \widetilde{\theta}_{1}(t)<\infty, \quad \forall t>0
\]
Cette  dernière inégalité est conséquence du développement asymptotique de $G_1$.\\

Si $t>\eps>0$, alors
\[
\widetilde{\theta}_n(t)=\sum_{k\geq 1}e^{-\la_{n,k}^2(t-\eps)-\la_{n,k}^2\eps}\leq \widetilde{\theta}_n(\eps) e^{-n^2 c(t-\eps)} \quad \forall n\geq 1
\]
par suite
\[
{\theta}(t)\leq \widetilde{\theta}_1(\eps)\sum_{n\geq 1} e^{-n^2 c(t-\eps)}=\widetilde{\theta}_1(\eps)\Theta\bigl( c(t-\eps)\bigr) \quad \forall \,\eps>0 \quad \forall\, t>\eps.
\]
où  $\Theta(t):=\sum_{n\geq 1} e^{-n^2 t}$ est la fonction thêta habituelle qui converge pour tout $t>0$ voir par exemple \cite[p. 96]{Soulé}. On  en  déduit que $\widetilde{\theta}(t)$ est fini, $\forall t>0$. Si l'on prend $\eps=\frac{t}{2}$,  comme $\sqrt{t}\widetilde{\theta}_1(t)$ et $\sqrt{t}\Theta(t)$ convergent vers une limite finie lorsque $t$ tend vers 0, alors par l'inégalité précédente on obtient que
\[
\underset{t\mapsto 0}{\limsup} \, t \theta(t),
\]
est fini.

}
\end{remarque}

\subsection{Application: régularisation de la fonction zêta de   $\Delta_{\overline{\mathcal{O}(m)}_\infty}$}

Cette section est consacrée à l'étude d'une nouvelle classe de fonctions Zêta associées à des opérateurs Laplaciens
singuliers, en utilisant les  résultats précédents on va montrer que ces fonctions Zêta possèdent des propriétés
analogues aux fonctions Zêta classiques des opérateurs Laplaciens associées à des métriques de classes $\cl$.\\

Soit $(X,h_X)$ une variété kählérienne et $(E,h_E)$ un fibré holomorphe hermitien. Si l'on suppose que $h_X$ et $h_E$
sont  de classe $\cl$ alors   on sait que l'opérateur $\Delta_{\overline{E}}^\bullet$, agissant sur
$A^{0,\bullet}(X,E)$, admet un spectre  infini et discret et possède un noyau de chaleur qu'on note par
$e^{-t\Delta_{\overline{E}}}$, avec $t>0$, voir \cite[Définition 2.15]{heat}. En plus $e^{-t\Delta_{\overline{E}}}$
admet une trace au sens de \cite[§ 2.6]{heat} et  que la fonction Zêta associée à ce spectre s'écrit comme la
transformée de Mellin  de cette  trace. On sait que cette trace possède un développement par prolongement analytique
 en série de Laurent pour $t$ petit, voir \cite[p. 91]{heat} cela permet de  prouver que la fonction Zêta s'étend
méromorphiquement
au plan complexe et qu'elle est holomorphe en zéro, voir \cite[9.35]{heat}.\\

Dans notre cas, c'est à dire $\Bigl((T\p^1,h_\infty); \overline{\mathcal{O}(m)}_\infty\Bigr)$, les métriques
considérées sont singulières. On ne peut pas donc appliquer la théorie classique pour affirmer que
$\Delta_{\overline{\mathcal{O}(m)}_\infty}$ possède un spectre discret infini. Même si l'on arrive  à déterminer le
spectre, par un autre moyen, on ne peut pas utiliser cette théorie   pour étudier la fonction Zêta associée quant au
calcul explicite du déterminant régularisé cela reste une tâche délicate, voir par exemple les calculs du \cite{GSZ}.\\

Dans un précédent texte, on a étudié le spectre du Laplacien singulier:
\[
 \Delta_{\overline{\mathcal{O}(m)}_\infty},\forall m\in \N,
\]
et on a montré que ses valeurs propres et vecteurs propres sont décrits par une famille de fonctions analytiques $(L_n)_{n\in \Z}$.  Ces fonctions ont été définies comme suit:
\[
 L_n(z)=-z^{m}\frac{d}{dz}\Bigl(z^{-m}J_n(z)J_{n-m}(z)\Bigr) \quad \forall z\in \C,
\]
et   on a introduit l'ensemble suivant
\begin{equation}\label{Zeros1}
Z_{n}:=\Bigl\{\lambda \in \C\,\bigl|\,L_n(\la)=0 \Bigr\}.
\end{equation}
Alors, on a montré  dans \cite{Mounir1}:

\begin{lemma}\label{Zeros2}
$\forall n\in \Z$, l'ensemble $Z_n$ est un sous-ensemble infini et discret de $\R^\ast$,

\[
Z_n\cap \{z\in \R|\, J_{n-m}(z)=0 \}=\emptyset,
\]
et
\begin{equation}\label{encoreeq2}
L_n(z)=-J_n(z)J_{n-m-1}(z)+J_{n+1}(z)J_{n-m}(z),\quad \forall z\in \C.
\end{equation}
\end{lemma}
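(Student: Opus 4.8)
The plan is to deduce everything from the closed form \eqref{encoreeq2}, which is the algebraic backbone, and then read off the analytic properties of $L_n$. First I would prove \eqref{encoreeq2} directly from the definition and the Bessel recurrences $J_\nu'=J_{\nu-1}-\frac{\nu}{z}J_\nu$ and $\frac{2\nu}{z}J_\nu=J_{\nu-1}+J_{\nu+1}$. Expanding $L_n=-z^m\frac{d}{dz}\big(z^{-m}J_nJ_{n-m}\big)=\frac{m}{z}J_nJ_{n-m}-\big(J_n'J_{n-m}+J_nJ_{n-m}'\big)$ and substituting $J_n'=J_{n-1}-\frac{n}{z}J_n$ and $J_{n-m}'=J_{n-m-1}-\frac{n-m}{z}J_{n-m}$, the rational terms collapse via $\frac{2n}{z}J_nJ_{n-m}=(J_{n-1}+J_{n+1})J_{n-m}$, and collecting what remains gives exactly $J_{n+1}J_{n-m}-J_nJ_{n-m-1}$. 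Since the $J_k$, $k\in\Z$, are entire, \eqref{encoreeq2} also exhibits $L_n$ as an entire function on $\C$.

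For the discreteness and infinitude of $Z_n$ I would feed \eqref{encoreeq2} into Hankel's asymptotics $J_\nu(z)=\sqrt{\tfrac{2}{\pi z}}\big(\cos(z-\tfrac{\nu\pi}{2}-\tfrac{\pi}{4})+O(z^{-1})\big)$. A product-to-sum computation shows that the two leading $\cos(\alpha-\beta)$ cross-terms coincide and cancel in the difference, leaving $L_n(z)=-\frac{2}{\pi z}\cos\!\big(2z-(n-\tfrac{m}{2})\pi\big)+O(z^{-2})$ as $z\to+\infty$ along $\R$. In particular $L_n\not\equiv 0$, so its zeros are isolated and $Z_n$ is discrete; moreover the leading cosine changes sign with asymptotic spacing $\tfrac{\pi}{2}$, producing infinitely many real zeros, whence $Z_n$ is infinite.

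The real content is $Z_n\subset\R^\ast$, i.e. that every complex zero of $L_n$ is real and nonzero, and this is the step I expect to be hardest. I would offer two complementary routes. Conceptually, $Z_n$ is by construction in \cite{Mounir1} the set of square roots of the nonzero eigenvalues of the $n$-th angular reduction of $\Delta_{\overline{\mathcal{O}(m)}_\infty}$: in polar coordinates $z=re^{i\theta}$ the metric $\omega_\infty$ is flat on each of $\{|z|<1\}$ and $\{|z|>1\}$, so the eigenvalue equation becomes a Bessel equation on each piece, and matching the solution regular at $0$ with the one regular at $\infty$ across $|z|=1$ yields the condition $L_n(\lambda)=0$; self-adjointness and positivity of the operator then force $\lambda^2>0$. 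Self-containedly, one can instead argue by zero counting: $L_n$ is entire of order $1$, the real zeros have counting function $n_{\R}(R)\sim\frac{4R}{\pi}$ by the asymptotics above, while a Jensen-formula estimate of $\frac{1}{2\pi}\int_0^{2\pi}\log|L_n(Re^{i\theta})|\,d\theta$, governed by the $e^{2R|\sin\theta|}$ growth of the two Bessel factors near the imaginary axis, gives the same total $n(R)\sim\frac{4R}{\pi}$; since the real zeros already saturate $n(R)$, no non-real zero can occur, and $0\notin Z_n$ follows from positivity.

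Finally, for $Z_n\cap\{J_{n-m}=0\}=\varnothing$: at a real zero $z_0$ of $J_{n-m}$ the terms $\frac{m}{z}J_nJ_{n-m}$ and $J_n'J_{n-m}$ in the expansion above both vanish, leaving $L_n(z_0)=-J_n(z_0)J_{n-m}'(z_0)$. The positive zeros of $J_{n-m}$ are simple, so $J_{n-m}'(z_0)\neq 0$, and $J_n(z_0)\neq0$ because $J_n$ and $J_{n-m}$, whose integer orders differ by $m\ge 1$, share no positive zero — this is Bourget's hypothesis, established by Siegel, applicable once the two orders are distinct in absolute value. Hence $L_n(z_0)\neq 0$. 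The two genuinely nontrivial inputs are thus the global reality of the zeros and this appeal to Siegel's theorem; the identity and the asymptotics are routine.
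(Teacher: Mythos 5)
The paper itself contains no proof of this lemma: it is imported verbatim from \cite{Mounir1} (the text says explicitly that it was established there), so there is no internal argument to compare yours against. Judged on its own terms, your algebraic and asymptotic steps are sound: expanding $L_n=\frac{m}{z}J_nJ_{n-m}-J_n'J_{n-m}-J_nJ_{n-m}'$ and inserting the recurrences does collapse everything to $J_{n+1}J_{n-m}-J_nJ_{n-m-1}$, and the Hankel asymptotics do give $L_n(z)=-\frac{2}{\pi z}\cos\bigl(2z-(n-\tfrac{m}{2})\pi\bigr)+O(z^{-2})$ on the real axis, which yields discreteness (since $L_n\not\equiv 0$) and infinitely many real zeros.

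The gap sits in the step you yourself flag as hardest, the inclusion $Z_n\subset\R^\ast$. Your Jensen-formula route does not close: matching $n_{\R}(R)\sim\frac{4R}{\pi}$ against a total count $n(R)\sim\frac{4R}{\pi}$ only shows that the non-real zeros have counting function $o(R)$, which is perfectly compatible with finitely many (or even a sparse infinite family of) non-real zeros. To exclude them by counting one needs an exact argument-principle count on rectangles compared with an exact lower bound on the number of real zeros, a much more delicate Hurwitz-type argument than the asymptotic comparison you sketch. The spectral route (positivity and symmetry of the operator whose matching condition across $|z|=1$ is $L_n(\lambda)=0$) is the one that actually works, but as written it defers entirely to the construction in \cite{Mounir1}, so it is a citation rather than a proof. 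Finally, your argument for $Z_n\cap\{J_{n-m}=0\}=\emptyset$ via Bourget--Siegel genuinely requires $|n|\neq|n-m|$: when $m$ is even and $n=\tfrac{m}{2}$ the two Bessel factors coincide up to sign, all three terms of the expanded derivative vanish at any zero of $J_{m/2}$, so that zero lies in $Z_{m/2}$, and the disjointness claim needs a separate treatment (or a restriction on $n$) that neither your proposal nor the quoted statement supplies.
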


\begin{theorem}\label{deriveeLnormeCopie}
Soit $n\in \Z$ et  $\la\in Z_{n}$, alors $\la$ est un zéro simple de $L_n$  et on a
\[
L'_{n}(\la)=2\frac{J_{n-m}(\la)}{J_{n}(\la)}\bigl(\vf_{{}_{n,\la}},\vf_{{}_{n,\la}}\bigr)_{L^2,\infty}.
\]
où $\vf_{n,\la}$ est un vecteur propre associé à $\la$.
\end{theorem}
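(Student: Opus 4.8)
The plan is to establish the derivative formula by a Wronskian (Lagrange identity) argument for the radial Bessel operator, and then to read off simplicity as an immediate corollary. Recall from \cite{Mounir1} that, after decomposing in the angular variable into Fourier modes $e^{in\theta}$, the eigenvector $\vf_{n,\la}$ attached to a zero $\la\in Z_n$ is built piecewise from Bessel functions: on the inner disc $\{|z|=r<1\}$ it is proportional to $J_n(\la r)$, while on the outer region $\{|z|=r>1\}$, after the toric change of chart $w=1/z$, it is proportional to $J_{n-m}(\la/r)$; continuity across $\{|z|=1\}$ fixes the ratio of the two proportionality constants, and the matching of fluxes across $\{|z|=1\}$ is exactly the equation $L_n(\la)=0$ (this is precisely how $L_n$ and $Z_n$ were produced, cf. \eqref{Zeros1}). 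First I would make this description and the induced normalisation of $\vf_{n,\la}$ fully explicit, so that $(\vf_{n,\la},\vf_{n,\la})_{L^2,\infty}$ splits as the sum of two weighted radial integrals $\int_0^1 r\,J_n(\la r)^2\,dr$ and, after $w=1/z$, $\int_0^1 \rho\,J_{n-m}(\la\rho)^2\,d\rho$, each carrying the Bessel prefactor forced by continuity at $r=1$.

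The core of the argument is the Lagrange identity for the Sturm--Liouville form $(r\,u')'+(\la^2 r-\tfrac{n^2}{r})u=0$ of Bessel's equation. For two parameters $\la,\mu$ and the corresponding radial solutions $u_\la,u_\mu$ one has
\[
\frac{d}{dr}\Bigl[r\bigl(u_\la' u_\mu-u_\mu' u_\la\bigr)\Bigr]=(\mu^2-\la^2)\,r\,u_\la u_\mu,
\]
so that integrating over each region converts the $L^2$ integrals into boundary terms. The contributions at $r=0$ and, via $w=1/z$, at $r=\infty$ vanish by the regularity of $J_n$, resp. $J_{n-m}$, at the origin, so only the jump across $\{|z|=1\}$ survives. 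I would take $u_\la$ to be the genuine eigensolution (so $L_n(\la)=0$) and $u_\mu$ a comparison solution that still solves the ODE in each region and is continuous at $r=1$, but whose flux mismatch across $\{|z|=1\}$ equals, up to the Bessel prefactors, $L_n(\mu)$. Summing the two regional identities then yields an equation of the shape
\[
(\mu^2-\la^2)\,(\vf_{n,\la},\vf_{n,\mu})_{L^2,\infty}=(\text{pr\'efacteur de Bessel})\cdot L_n(\mu),
\]
where the prefactor is evaluated at $r=1$ and is read off from the explicit form of $\vf_{n,\la}$.

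Dividing by $\mu-\la$, letting $\mu\to\la$, and using $L_n(\la)=0$ together with $\mu^2-\la^2=(\mu+\la)(\mu-\la)\to 2\la(\mu-\la)$, the right-hand quotient converges to $L_n'(\la)$ while the left-hand integral converges to $(\vf_{n,\la},\vf_{n,\la})_{L^2,\infty}$; collecting the prefactors produces exactly the stated identity $L_n'(\la)=2\,\frac{J_{n-m}(\la)}{J_n(\la)}(\vf_{n,\la},\vf_{n,\la})_{L^2,\infty}$. The value $J_n(\la)$ in the denominator is legitimate: if $J_n(\la)=0$, then \eqref{encoreeq2} would give $L_n(\la)=J_{n+1}(\la)J_{n-m}(\la)$, forcing $J_{n+1}(\la)=0$ since $J_{n-m}(\la)\neq 0$ by Lemme \eqref{Zeros2}, which is impossible because $J_n$ and $J_{n+1}$ share no zero. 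Finally, simplicity of $\la$ is then automatic: the right-hand side is nonzero because $(\vf_{n,\la},\vf_{n,\la})_{L^2,\infty}>0$ for a nonzero eigenvector and $J_{n-m}(\la)\neq 0$, hence $L_n'(\la)\neq 0$.

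I expect the main obstacle to be the precise bookkeeping of the constant $2\,J_{n-m}(\la)/J_n(\la)$: one must pin down the exact normalisation of $\vf_{n,\la}$ from \cite{Mounir1}, correctly convert the outer radial integral and the flux through the chart change $w=1/z$ (which is where the index shift $n\mapsto n-m$ and the weight of the canonical metric on $\mathcal{O}(m)$ enter), and ensure the two regional boundary terms recombine without stray factors. The Wronskian machinery itself is routine — it is in essence the Lommel integral $\int_0^1 r\,J_n(\la r)^2\,dr=\tfrac12\bigl(J_n'(\la)^2+(1-\tfrac{n^2}{\la^2})J_n(\la)^2\bigr)$ — but matching its output to the derivative of the product $L_n=-z^m\frac{d}{dz}\bigl(z^{-m}J_n J_{n-m}\bigr)$ through the Bessel recurrences is where the care is required.
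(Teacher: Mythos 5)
Un point de contexte d'abord : le pr\'esent article ne d\'emontre pas cet \'enonc\'e ; il le rappelle de \cite{Mounir1} (\og Alors, on a montr\'e dans \cite{Mounir1} \fg), et le label m\^eme du th\'eor\`eme l'indique. Il n'y a donc pas de preuve interne \`a laquelle comparer votre tentative, et je ne peux juger celle-ci que sur le fond et sur sa coh\'erence avec les ingr\'edients que l'article utilise par ailleurs.

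Sur le fond, votre strat\'egie (identit\'e de Lagrange/Wronskien pour la forme de Sturm--Liouville de l'\'equation de Bessel, perturbation du param\`etre spectral $\la\mapsto\mu$, passage \`a la limite $\mu\to\la$) est correcte et aboutit exactement \`a la formule annonc\'ee. Elle est enti\`erement coh\'erente avec ce que l'article \'etablit plus loin : la description par morceaux du vecteur propre (la fonction $f_{n,\la}$ de la proposition sur $\mathcal{I}_{n,\la}$), l'int\'egrale de Lommel \eqref{encoreeq} du lemme \eqref{petitlemme}, et la condition de raccord des flux \eqref{encoreeq3} qui n'est autre que $L_n(\la)=0$. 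J'ai v\'erifi\'e la constante : le saut de flux de la solution de comparaison en $r=1$ vaut $\frac{\mu}{J_{n-m}(\mu)}L_n(\mu)$, le terme de bord de Lagrange le multiplie par $u_\la(1)=J_n(\la)$, et le facteur $\mu+\la\to 2\la$ du membre de gauche compense le $\la$ du membre de droite, de sorte que les $\la$ se simplifient et qu'il reste bien $L_n'(\la)=2\,\frac{J_{n-m}(\la)}{J_n(\la)}(\vf_{n,\la},\vf_{n,\la})_{L^2,\infty}$. Votre justification de $J_n(\la)\neq 0$ via \eqref{encoreeq2} et l'absence de z\'ero commun \`a $J_n$ et $J_{n+1}$ est \'egalement correcte, de m\^eme que la d\'eduction de la simplicit\'e. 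Une voie alternative, plus calculatoire mais qui \'evite la solution de comparaison, consiste \`a \'ecrire $L_n=-z^m(z^{-m}u)'$ avec $u=J_nJ_{n-m}$, \`a utiliser $u'(\la)=\frac{m}{\la}u(\la)$ et l'\'equation de Bessel pour exprimer $L_n'(\la)=-\la^m(z^{-m}u)''(\la)$ en fonction de $J_n,J_{n-m},J_n',J_{n-m}'$, puis \`a comparer avec la somme des deux int\'egrales de Lommel donnant $(\vf,\vf)$ ; l'identit\'e cl\'e est alors $\frac{u'(\la)^2}{u(\la)}=\frac{m^2}{\la^2}u(\la)$. Le seul point de vigilance r\'eel, que vous signalez vous-m\^eme, est la normalisation exacte de $\vf_{n,\la}$ et le poids $\max(1,r)^{2m+4}$ de la m\'etrique canonique dans l'int\'egrale radiale, dont d\'epend la constante $2\,J_{n-m}(\la)/J_n(\la)$.
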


\begin{theorem}\label{spectreOm}
On a, pour tout $m\in\N^\ast$
{{} \[
\mathrm{Spec}(\Delta_{\overline{\mathcal{O}(m)}_\infty})=\Bigl\{0\Bigr\}\bigcup \Bigl\{\frac{\la^2}{4} \Bigl|\, \exists n\in \N,\, \la\in Z_n  \Bigr\}.
\]}
\begin{itemize}
\item
Si  $m$ est pair, alors  la multiplicité de $\frac{\la^2}{4}$ est $2$ si $\la\in Z_n$ avec $n\geq m+1 $ ou $0\leq n\leq \frac{m}{2}-1$ et de multiplicité $1$ si $\la\in Z_{\frac{m}{2}}$.
\item
Lorsque $m$ est impair, alors $\frac{\la^2}{4}$ est de multiplicité $2$ si $n\geq m+1$, égale à $1$ si $0\leq n\leq m $.
\end{itemize}
\end{theorem}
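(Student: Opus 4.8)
The guiding idea is to exploit the toric symmetry. Since both canonical metrics are invariant under the action $z\mapsto e^{i\theta}z$ of $\s$, I would decompose the $L^2$-space of sections of $\mathcal{O}(m)$ into Fourier modes indexed by $n\in\Z$: in the trivialisation over $\{x_0\neq 0\}$ a section of weight $n$ reads $\phi(r)e^{in\theta}$ with $z=re^{i\theta}$. Because $\omega_\infty$ and $h_\infty$ are flat on each of the two hemispheres $\{|z|<1\}$ and $\{|z|>1\}$ and are singular only along the circle $|z|=1$, the equation $\Delta_{\overline{\mathcal{O}(m)}_\infty}\vf=\frac{\la^2}{4}\vf$ reduces, in each mode, to Bessel's equation on each hemisphere. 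The solution regular at $z=0$ is $\phi(r)=J_n(\la r)$; through $w=1/z$ and the transition factor $z^m$, the solution regular at $z=\infty$ is, up to a constant, $\phi(r)=r^m J_{n-m}(\la/r)$.

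The first step is to derive the characteristic equation of mode $n$. The problem becomes a transmission problem across $|z|=1$; I would impose continuity of $\phi$ and of its radial derivative at $r=1$, the conditions coming from the weak (self-adjoint) formulation of the singular operator. The associated $2\times 2$ determinant then equals $mJ_n(\la)J_{n-m}(\la)-\la\,(J_nJ_{n-m})'(\la)=\la\,L_n(\la)$ by the Leibniz rule, so that, since $\la\in\R^\ast$, the existence of an eigenvector in mode $n$ is equivalent to $L_n(\la)=0$, i.e. $\la\in Z_n$, the eigenvalue being $\frac{\la^2}{4}$ after normalisation of $\omega_\infty$. By theorem \eqref{deriveeLnormeCopie} this zero is simple, hence the eigenspace is one-dimensional in each mode; distinct Fourier modes being orthogonal in $L^2$, the multiplicity of $\frac{\la^2}{4}$ in the full spectrum is exactly the number of integers $n\in\Z$ with $\la\in Z_n$.

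The second step uses the symmetry $n\leftrightarrow m-n$. From \eqref{encoreeq2} and the reflection formula $J_{-k}=(-1)^kJ_k$ ($k\in\Z$), a direct computation gives $L_{m-n}=(-1)^m L_n$, whence $Z_{m-n}=Z_n$ for every $n\in\Z$. As at least one of $n$ and $m-n$ is nonnegative, one has $\bigcup_{n\in\Z}Z_n=\bigcup_{n\in\N}Z_n$, which yields the stated equality for the spectrum, the eigenvalue $0$ corresponding to the $m+1$ holomorphic sections $1,z,\dots,z^m$. For the multiplicities, the number of integers $n$ with $\la\in Z_n$ is the size of the orbit of $n$ under the involution $n\mapsto m-n$, equal to $2$ except at the fixed point $n=\frac{m}{2}$ (possible only when $m$ is even), where it equals $1$. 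It then remains to translate this count into the intervals of the statement: if $m$ is even I would take the fundamental domain $\{0,\dots,\frac{m}{2}\}\cup\{m+1,m+2,\dots\}$, giving multiplicity $2$ on $0\le n\le\frac{m}{2}-1$ and on $n\ge m+1$, and $1$ at $n=\frac{m}{2}$; if $m$ is odd, for want of a fixed point, I would list both representatives of each orbit contained in $\{0,\dots,m\}$ with weight $1$, and the unique representative of the straddling orbits ($n\ge m+1$) with weight $2$.

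The main obstacle will be twofold. On the one hand, one must justify rigorously that the matching condition imposed at $|z|=1$ is indeed the one dictated by the natural self-adjoint extension of $\Delta_{\overline{\mathcal{O}(m)}_\infty}$, and that the family $(\vf_{n,\la})$ is complete (no hidden eigenvalue): this is the analytic heart of the problem, resting on the spectral study carried out in \cite{Mounir1}. On the other hand, for the multiplicity to coincide exactly with the orbit size, one must check that two distinct orbits never produce the same eigenvalue, that is $Z_n\cap Z_{n'}=\emptyset$ as soon as $n'\notin\{n,m-n\}$, which will follow from the explicit description of the zeros of the $L_n$ provided by \eqref{Zeros2} and \eqref{deriveeLnormeCopie}.
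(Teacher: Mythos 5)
The paper does not prove Theorem \eqref{spectreOm}: it is recalled verbatim from \cite{Mounir1} (the computation of the spectrum of the singular Laplacian is the subject of that earlier paper and of the author's thesis), so there is no proof in this text against which your argument can be matched step by step. What can be said is that your sketch is consistent with all the auxiliary structure the present paper does set up: the separation into Fourier modes, the Bessel equations on the two flat hemispheres, the characteristic function $L_n(\la)=\frac{m}{\la}J_n(\la)J_{n-m}(\la)-(J_nJ_{n-m})'(\la)$ and its zero set $Z_n$ from \eqref{Zeros1}--\eqref{Zeros2}, and the involution $n\mapsto m-n$. Your identity $L_{m-n}=(-1)^mL_n$, hence $Z_{m-n}=Z_n$, is correct (it is the counterpart of the relation $G_n=G_{m-n}$ proved later in the paper), and your fundamental-domain bookkeeping reproduces exactly the multiplicity pattern of the statement, including the weight $1$ at the fixed point $n=\frac{m}{2}$ for $m$ even and the weight-$1$ listing of both orbit representatives in $\{0,\dots,m\}$ for $m$ odd.

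As a proof, however, the sketch is incomplete at precisely the two points you flag, and these are not minor. First, the transmission condition at $|z|=1$ is not something you can simply posit: the operator $\Delta_{\overline{\mathcal{O}(m)}_\infty}$ is itself a construction of \cite{Mounir1}, and both the matching condition and the completeness of the family $(\vf_{n,\la})$ (i.e.\ that the list of eigenvalues is exhaustive, which is what the equality of sets in the theorem asserts) require the spectral analysis carried out there; nothing in the present paper supplies it. Second, the exact multiplicities need two facts you do not establish: that a simple zero of the $2\times 2$ determinant forces a one-dimensional kernel in that mode (Theorem \eqref{deriveeLnormeCopie} is indeed the right ingredient, together with $Z_n\cap\{J_{n-m}=0\}=\emptyset$ from Lemma \eqref{Zeros2}), and that $Z_n\cap Z_{n'}=\emptyset$ whenever $n'\notin\{n,m-n\}$, without which the multiplicity could exceed the orbit size. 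So the proposal is a faithful reconstruction of the intended mechanism, but the analytic core lives in the cited reference and is not reproduced here.
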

Lorsque $m=0$, on avait montré

\begin{theorem}
  \[
\mathrm{ Spec}(\Delta_{{\overline{\mathcal{O}}}_\infty})=\Bigl\{0\Bigr\}\bigcup\Bigl\{\frac{j_{m,k}^2}{4},\frac{j_{n,l}^{'2}}{4}\,\Bigl|\, n,m\in \N,\quad k,l\geq 1\Bigr\},
\]
avec  $\frac{j_{n,k}^2}{4}$ (resp.  $\frac{j_{n,k}^{'2}}{4}$) est   de multiplicité 2 si  $n> 0$, et de multiplicité 1 quand $n=0$, où $j_{\ast,\ast}$ (resp. $j'_{\ast,\ast}$) est un zéro positif de $J_\ast$(resp. $J'_\ast$).
\end{theorem}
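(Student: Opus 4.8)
The plan is to specialize to $m=0$ the construction leading to Theorem \eqref{spectreOm}, the decisive feature being that the auxiliary function $L_n$ now factors completely. First I would compute, straight from the definition,
\[
L_n(z)=-\frac{d}{dz}\bigl(J_n(z)^2\bigr)=-2\,J_n(z)\,J_n'(z)\qquad\forall\,z\in\C,
\]
which also follows from formula \eqref{encoreeq2} at $m=0$ since $J_{n+1}-J_{n-1}=-2J_n'$. Hence the zero set $Z_n$ of $L_n$ splits as the union of the positive zeros of $J_n$ and those of $J_n'$. The case $m=0$ must be handled separately precisely here: for $m\geq1$ Lemma \eqref{Zeros2} excludes the zeros of $J_{n-m}$ from $Z_n$, whereas for $m=0$ one has $J_{n-m}=J_n$ and its positive zeros do belong to $Z_n$.

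Next I would recover the eigenvalues from the toric symmetry of the canonical metric. Being rotation-invariant, the metric lets one decompose $L^2(\p^1)$ into Fourier modes indexed by $n\in\Z$, and on each mode the operator $\Delta_{\overline{\mathcal{O}}_\infty}$ reduces to a Bessel-type radial operator (flat on the inner disc $|z|<1$, and flat again in the coordinate $1/z$ on the outer disc). Imposing the matching and regularity condition at the circle $|z|=1$ separating the two toric charts turns the spectral condition on the $n$-th mode into $L_n(\la)=0$; the eigenvalues are therefore the numbers $\frac{\la^2}{4}$ with $\la$ a positive zero of $J_n$ or of $J_n'$, that is, exactly the two families $j_{n,k}^2/4$ and $j_{n,k}^{'2}/4$. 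The eigenvalue $0$ is contributed by the constant functions, which span $\ker\Delta_{\overline{\mathcal{O}}_\infty}$.

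For the multiplicities I would exploit the symmetry $n\leftrightarrow-n$: since $J_{-n}=(-1)^nJ_n$ for integral $n$, the modes $+n$ and $-n$ carry the same radial spectrum. The mode $n=0$ is thus simple, while the pairing of $\pm n$ for $n>0$ yields multiplicity $2$, as claimed.

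The hard part will be the spectral justification in the singular setting: one must show that the radial operator admits a self-adjoint realization whose eigenfunctions are exactly those selected by the regularity condition, and that the above enumeration exhausts the spectrum without hidden coincidences among the zeros of $J_n$ and $J_{n'}$ for $n\neq n'$. The latter follows from the classical fact that Bessel functions of distinct integral orders share no positive zero; the former is the content of the spectral analysis carried out in \cite{Mounir1}.
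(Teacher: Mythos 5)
The paper gives no proof of this statement: it is recalled verbatim from \cite{Mounir1} (``Lorsque $m=0$, on avait montr\'e\dots''), so there is no internal argument to compare yours against. Your reconstruction is nonetheless consistent with everything the paper does record. The factorization $L_n=-2J_nJ_n'$ at $m=0$ is correct (it follows from \eqref{encoreeq2} together with \eqref{B3}), and you rightly identify this as the point where the case $m=0$ escapes Lemma \eqref{Zeros2}: for $m\geq 1$ the zeros of $J_{n-m}$ are excluded from $Z_n$, whereas for $m=0$ the zeros of $J_n$ itself enter, which is why the Dirichlet family $j_{n,k}$ and the Neumann family $j'_{n,k}$ both appear. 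The multiplicity count via the symmetry $n\leftrightarrow -n$ (the $m=0$ instance of $G_n=G_{m-n}$) matches the statement, provided the multiplicities are read additively per family: since $J_0'=-J_1$, the number $j'_{0,k}=j_{1,k}$ occurs in both lists and the corresponding eigenvalue has total multiplicity $1+2=3$; that this additive reading is the intended one is confirmed by the definitions of $\zeta_{\mathcal{D}}$ and $\zeta_{\mathcal{N}}$ used immediately after the theorem. The one point you cannot close from the data in this paper --- that the singular operator admits a self-adjoint realization whose spectrum is exhausted by the matching condition on the circle $|z|=1$, i.e.\ by $L_n(\la)=0$ --- is precisely the content of \cite{Mounir1}, and deferring it there is exactly what the paper itself does.
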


Il est donc naturel d'associer à ce spectre une fonction Zêta et d'en étudier les propriétés. On introduit donc la définition suivante:

\begin{definition}\label{definitionzetacan}Soit $m$ un entier positif. On pose
\[
\zeta_{\Delta_{\overline{\mathcal{O}(m)}_\infty}}(s):=\sum_{ \al\in \mathrm{Spec}(\Delta_{\overline{\mathcal{O}(m)}_\infty})\setminus\{0\} }\frac{1}{\al^s},\quad \forall s\in \C,
\]
où chaque valeur propre est comptée avec sa multiplicité. On appelle $\zeta_{\Delta_{\overline{\mathcal{O}(m)}_\infty}}$ la fonction zêta associée à l'opérateur singulier ${\Delta_{\overline{\mathcal{O}(m)}_\infty}}$.
\end{definition}
%\vspace{2cm}

Le principal résultat de ce paragraphe est le théorème résultat ci-dessous. On  distinguera deux cas: le cas $m=0$ et celui lorsque $m\geq 1$. En fait, on peut éviter cela et supposer que $m\geq 0$, mais on préfère  cette séparation de cas par souci de clarté.
\begin{theorem}\label{lavaleurdezeta}
Pour tout $m\geq 0$, la fonction $\zeta_{\Delta_{\overline{\mathcal{O}(m)}_\infty}}$ converge pour tout $\mathrm{Re}(s)>1$, avec un pôle en $1$ et  admet un prolongement analytique au voisinage de $s=0$, de plus, on a

\[
\zeta_{\Delta_{\overline{\mathcal{O}(m)}_\infty}}(0)=-\frac{2}{3}-\frac{m}{2},\footnote{Il est important de noter que d'après la théorie classique de \cite{heat} la quantité $\zeta_{\Delta_{\overline{L}}}(0)$ est un invariant qui ne dépend pas de la métrique $\cl$ sur $L$. On vérifie qu'ici cette valeur correspond  bien à la valeur de cet invariant.}
\]
et

\[\zeta_{\Delta_{\overline{\mathcal{O}(m)}_\infty}}'(0)=4\zeta_\Q'(-1)-\frac{1}{6}-\log \frac{\,{}(m+2)^{{m+1}}}{(m+1)!^2}=T_g(\overline{T\p^1}_\infty, \overline{\mathcal{O}(m)}_\infty).\]
\end{theorem}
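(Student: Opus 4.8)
Le plan est de ramener l'\'etude de $\zeta_{\Delta_{\overline{\mathcal{O}(m)}_\infty}}$ \`a la machinerie de r\'egularisation de familles de fonctions Z\^eta du th\'eor\`eme \ref{regularisation}. D'apr\`es le th\'eor\`eme \ref{spectreOm}, le spectre non nul est form\'e des $\frac{\la^2}{4}$ avec $\la\in Z_n$, $n\in\N$, affect\'es de multiplicit\'es $m_n$ explicites ($m_n=2$ pour $n$ assez grand, $m_n\in\{0,1\}$ pour un nombre fini de valeurs selon la parit\'e de $m$). J'\'ecrirais donc
\[
\zeta_{\Delta_{\overline{\mathcal{O}(m)}_\infty}}(s)=4^{s}\sum_{n}m_n\,\zeta_n(s),\qquad \zeta_n(s):=\sum_{\la\in Z_n,\;\la>0}\frac{1}{\la^{2s}},
\]
puis je construirais la famille $(G_n)$ requise en partant de l'expression $L_n(z)=-J_n(z)J_{n-m-1}(z)+J_{n+1}(z)J_{n-m}(z)$ du lemme \ref{Zeros2}: on pose $G_n$ \'egale \`a une normalisation de $L_n(iz)$, de sorte que ses z\'eros soient port\'es par l'axe imaginaire et que $p_n(z)=G_n(iz)/(l_n z^{e_n})=\prod_k(1-z^2/\la_{n,k}^2)$, les $\la_{n,k}$ d\'ecrivant $Z_n\cap\R_{>0}$. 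Le mon\^ome $l_n z^{e_n}$ absorbe l'annulation de $L_n$ en $0$, d'ordre $e_n=2n-m-1$ pour $n$ assez grand.

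Il faudra ensuite v\'erifier les hypoth\`eses de r\'egularisabilit\'e, ce qui reposera enti\`erement sur les d\'eveloppements asymptotiques des fonctions de Bessel. La minoration \eqref{lowerbound} d\'ecoulera de ce que le plus petit z\'ero positif de $L_n$ cro\^it lin\'eairement en $n$, et la repr\'esentation int\'egrale de chaque $\zeta_n$ sera fournie par le th\'eor\`eme \ref{repzeta}. Pour $|z|\gg1$ \`a ordre fix\'e, j'utiliserais $I_\nu(z)\sim e^{z}/\sqrt{2\pi z}$; le produit de deux fonctions de Bessel modifi\'ees donnera
\[
-\log\Bigl(\frac{G_n(z)}{l_n z^{e_n}}\Bigr)=-2z+(1+e_n)\log z+O(1),
\]
d'o\`u le coefficient de $\log(z^2)$, \`a savoir $a_n=\frac{1+e_n}{2}=n-\frac m2$, affine en $n$ (donc $d=1$). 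Enfin, la condition uniforme en l'ordre proviendra du d\'eveloppement asymptotique uniforme d'Olver de $I_\nu(\nu z)$, qui fait appara\^itre pr\'ecis\'ement
\[
r(z)=\sqrt{1+z^2}+\log\frac{z}{1+\sqrt{1+z^2}},\qquad \rho(z)=(1+z^2)^{-1/4},
\]
$r$ \'etant la fonction signal\'ee en note de la d\'efinition de famille r\'egularisable, tandis que $w$ s'obtiendra du premier terme correctif d'Olver.

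On appliquera alors le th\'eor\`eme \ref{regularisation}; puisque $d=1$, il donnera d'embl\'ee la convergence pour $\mathrm{Re}(s)>\frac{d+1}{2}=1$, le p\^ole en $s=1$ et le prolongement analytique au voisinage de $s=0$, conform\'ement \`a l'\'enonc\'e. En d\'eveloppant la formule explicite de ce th\'eor\`eme gr\^ace \`a $\Gamma(s+1)^{-1}=1+\gamma s+O(s^2)$ et \`a $\zeta_\Q(2s+1)=\frac{1}{2s}+\gamma+O(s)$, on obtiendra (dans le cas mod\`ele $\Omega=\N^\ast$, $m_n\equiv1$)
\[
\zeta(0)=-A(0),\qquad \zeta'(0)=-A'(0)+P(0)-B(0)+\tfrac12 F(0).
\]
Les fonctions $A$, $B$, $P$, $F$ sont des s\'eries de Dirichlet en les coefficients asymptotiques $a_n$, $b_n$, $w(0)$, pond\'er\'ees par $m_n$; comme $a_n$ est affine, elles se ram\`eneront \`a des valeurs de $\zeta_\Q$ et de ses d\'eriv\'ees en $-1$ et $0$. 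Il faudra prendre garde aux compensations: via $b_n=\widetilde b_n+2\widetilde a_n\log n$, les contributions en $\log n$ de $-A'(0)$ et de $-B(0)$ se neutraliseront, et la valeur $\zeta_\Q'(-1)$ de l'\'enonc\'e sortira des sommes r\'egularis\'ees restantes. Il restera enfin \`a incorporer le facteur $4^{s}$ (qui d\'ecale $\zeta'(0)$ de $(\log4)\,\zeta(0)$) et les contributions exceptionnelles des petits $n$, dont les $\zeta_n(0)$ et $\zeta_n'(0)$ se lisent directement sur le th\'eor\`eme \ref{repzeta}; apr\`es regroupement, les constantes parasites ($\log2$, $\log2\pi$, $\gamma$) se compenseront et l'on obtiendra les valeurs annonc\'ees, la seconde co\"incidant avec $T_g(\overline{T\p^1}_\infty,\overline{\mathcal{O}(m)}_\infty)$ de la proposition pr\'ec\'edente.

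La principale difficult\'e sera de nature analytique: \'etablir le d\'eveloppement asymptotique uniforme (en $n$ et en $z$) du produit de fonctions de Bessel d\'efinissant $G_n$ et, surtout, contr\^oler uniform\'ement le reste $\eta(\nu,z)$ afin de v\'erifier toutes les conditions de r\'egularisabilit\'e; c'est l\`a qu'interviendront de mani\`ere essentielle les d\'eveloppements uniformes d'Olver. La seconde difficult\'e, combinatoire, sera la comptabilit\'e exacte des multiplicit\'es et des indices de Bessel exceptionnels pour les petits $n$ selon la parit\'e de $m$, puis le regroupement final des constantes donnant pr\'ecis\'ement $-\frac23-\frac m2$ et $4\zeta_\Q'(-1)-\frac16-\log\frac{(m+2)^{m+1}}{((m+1)!)^2}$. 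Ces deux points acquis, l'\'egalit\'e avec $T_g$ sera automatique.
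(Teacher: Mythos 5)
Votre plan suit essentiellement la d\'emonstration du texte: m\^eme d\'ecomposition du spectre suivant le th\'eor\`eme \eqref{spectreOm} avec le facteur $4^s$, m\^eme fonction $G_n$ construite \`a partir de $L_n(iz)$ (le texte pose $G_n(z)=I_{n+1}(z)I_{n-m}(z)+I_n(z)I_{n-m-1}(z)$, avec $e_n=2n-m-1$), m\^eme recours aux th\'eor\`emes \eqref{repzeta} et \eqref{regularisation}, aux d\'eveloppements uniformes d'Olver, et m\^eme comptabilit\'e finale des petits indices et des constantes. Deux points pr\'ecis m\'eritent toutefois d'\^etre corrig\'es.

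D'abord, la fonction $\rho$ de la d\'efinition \eqref{r�gularisable} n'est pas $(1+z^2)^{-1/4}$: $G_{n+m}(nz)$ est une \emph{somme} de deux produits de fonctions de Bessel dont les pr\'efacteurs exponentiels diff\`erent, et le texte obtient $\rho(z)=(1+z^2)^{-1/2}\bigl(e^{2(\eta(z)-z\eta'(z))}+1\bigr)$; le terme correctif $w_m$ s'en d\'eduit par une combinaison des $v_{1,l}$ et non par le seul premier terme d'Olver. Ensuite, et c'est le point le plus s\'erieux, votre extraction $\zeta(0)=-A(0)$ et $\zeta'(0)=-A'(0)+P(0)-B(0)+\tfrac12 F(0)$ est erron\'ee: la transform\'ee de Mellin $F(s)=\int_0^\infty \frac{t^{s-1}}{2\pi i}\int_{\Lambda_c}\frac{e^{-z^2t}}{z}w(z)\,dz\,dt$ vaut $\Gamma(s)\int_{\Lambda_c}\frac{w(z)}{2\pi i\,z^{2s+1}}dz$ et poss\`ede donc un p\^ole simple en $s=0$, de sorte que le terme $\frac{s^2}{\Gamma(s+1)}\zeta_\Q(2s+1)F(s)$ tend vers une limite \emph{non nulle} en $s=0$. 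Dans le texte cette contribution vaut $F_m(0)=\frac{1}{12}(1+3m+3m^2)$ et elle est indispensable pour obtenir $\zeta_{\geq m+1}(0)=\frac16+\frac m2+\frac{m^2}{4}$ puis $\zeta_{\Delta_{\overline{\mathcal{O}(m)}_\infty}}(0)=-\frac23-\frac m2$; en l'omettant, votre calcul donnerait une valeur fausse de $\zeta(0)$ (et de $\zeta'(0)$). Le reste du sch\'ema (compensations des $\log n$ via $b_n=\widetilde b_n+2\widetilde a_n\log n$, constante de Kinkelin pour $\sum\log\Gamma(n)/n^{2s}$, d\'ecalage par $(\log 4)\,\zeta(0)$, contributions exceptionnelles lues sur \eqref{valeurszeta}) est conforme au texte.
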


\begin{proof} La démonstration est une application du théorème \eqref{regularisation}.
On commencera par le cas $m=0$ qui est relativement facile à traiter.

\subsubsection{Régularisation de la fonction zêta  $\zeta_{\Delta_{\overline{\mathcal{O}}_\infty}}$}

On pose
\[
\zeta_0(s)=\zeta_\mathcal{D}(s)+\zeta_\mathcal{N}(s),
\]
avec
\[
\zeta_\mathcal{D}(s)=\sum_{k\geq 1}\frac{1}{j_{0,k}^{2s}}+ \sum_{n\geq 1}\sum_{k\geq 1}\frac{2}{j_{n,k}^{2s}}\quad\text{et}\quad \zeta_\mathcal{N}(s)= \sum_{k\geq 1}\frac{1}{{j'}_{0,k}^{2s}}+\sum_{n\geq 1}\sum_{k\geq 1}\frac{2}{{j'}_{n,k}^{2s}}, \footnote{On a choisit les lettres $\mathcal{D}$  et $\mathcal{N}$ pour rappeler que ces objets sont liés aux deux problèmes avec condition aux bords classiques: de \textit{Dirichlet} et  de \textit{Neumann}.}
\]
Alors, voir \eqref{definitionzetacan}
\[
\zeta_{\Delta_{\overline{\mathcal{O}}\infty}}(s)=4^s\zeta_0(s).
\]
%{\footnotesize Rappelons que $\mathrm{Spec}(\Delta_{\overline{O}_0})=\{\frac{j^2_{n,k}}{4},\, \frac{c^2_{n,k}}{4}|\, n\in \N, \text{et}\, k\geq 1 \}$}.
On va montrer que
\[
\zeta_0(0)=-\frac{2}{3},\quad \zeta'_0(0)=4\zeta'_\Q(-1)-\frac{1}{6}+\frac{1}{3}\log 2.
\]
ce qui donnera le résultat cherché c'est à dire que $\zeta_{\Delta_{\overline{\mathcal{O}}\infty}}(0)=-\frac{2}{3}$ et $\zeta_{\Delta_{\overline{\mathcal{O}}\infty}}'(0)=4\zeta'_\Q(-1)-\frac{1}{6}-\log 2$.\\

D'après les calculs \cite[corollaire. 1]{Spreafico}, on a
\begin{equation}\label{Dirichlet}
\zeta_\mathcal{D}(0)=\frac{1}{6},\quad \zeta'_\mathcal{D}(0)=2\zeta'_\Q(-1)+\frac{1}{2}\log 2\pi +\frac{5}{12}.
\end{equation}
Il suffit d'étudier $\zeta_\mathcal{N}$. On considère  $\bigl\{ I_n'\,\bigl|\, n\geq 1 \bigr\}$ et la fonction zêta associée:
\begin{equation}\label{zetaN}
z_\mathcal{N}(s)=\sum_{n\geq 1}\sum_{k\geq 1}\frac{1}{{j'}_{n,k}^{2s}}.
\end{equation}

Avec les notations de la proposition \eqref{regularisation}, on considère
\[
p_{\mathcal{N},n}(z)=-\log \biggl(\frac{I_n'(nz)}{\frac{1}{2^n\Gamma(n)}(nz)^{n-1}}\biggr),
\]

Par \eqref{dvlptz2}, \eqref{developpement2} et \eqref{petitlemme}, la famille $\bigl\{I_n'\,|\, n\geq 1\bigr\}$ vérifie les conditions de théorème \eqref{regularisation}. Par un simple calcul, on trouve que:
 \[
 a_{\mathcal{N},n}:=\frac{1}{2}(n-\frac{1}{2}),\, b_{\mathcal{N},n}:=\frac{1}{2}\log 2\pi+(n-\frac{1}{2})\log n-n\log 2-\log \Gamma (n),\,\text{et}\, w(z)=V_1(z).
\]
Par suite
 \[
\begin{split}
 A_{\mathcal{N}}(s)&:=\sum_{n=1}^\infty  \frac{a_{\mathcal{N},n}}{n^{2s}}\\
&=\frac{1}{2}\zeta_\Q(2s-1)-\frac{1}{4}\zeta_\Q(2s),
\end{split}
\]
(en particulier, $A_{\mathcal{N}}'(0)=\zeta'_\Q(-1)-\frac{1}{2}\zeta'_\Q(0)=\zeta'_\Q(-1)+\frac{1}{4}\log (2\pi)
$) voir par exemple,\cite[pp. 94-95]{Soulé}.
\[
 \begin{split}
  B_{\mathcal{N}}(s)&:=\sum_{n=1}^\infty \frac{b_{\mathcal{N},n}}{n^{2s}}\\
&=-\sum_{n=1}^\infty \frac{\log \Gamma(n)}{n^{2s}}+\frac{1}{2}\log (2\pi)\zeta_\Q(2s)-\log( 2) \zeta_\Q(2s-1)-\zeta'_\Q(2s-1)+\frac{1}{2}\zeta'_\Q(2s),
 \end{split}
\]

et
\[
 P_{\mathcal{N}}(s):=\sum_{n=1}^\infty \frac{1}{n^{2s}}w(0)=\sum_{n=1}^\infty \frac{1}{n^{2s}}V_1(0)=-\frac{1}{12}\zeta_\Q(2s+1)
\]
(on a $V_1(0)=-\frac{1}{12}$). \\

La proposition \eqref{regularisation} affirme que $P_{\mathcal{N}}-B_{\mathcal{N}}$ est prolongeable en une fonction analytique  en un voisinage ouvert de $s=0$, on va en donner une autre preuve plus directe et on calculera  sa valeur en $s=0$. On pose
\begin{equation}\label{eta}
\eta(s):=\sum_{n=1}^\infty \frac{\log \Gamma(n)}{n^{2s}}
 -\frac{1}{12}\zeta_\Q(2s+1),
\end{equation}
alors
\begin{equation}\label{mm}
\begin{split}
 -B_\mathcal{N}(s)+P_\mathcal{N}(s)&=\eta(s)-\frac{1}{2}\log(2\pi)\zeta_\Q(2s)\\
 &+\log(2)\zeta_\Q(2s-1)+\zeta_\Q'(2s-1)-\frac{1}{2}\zeta'_\Q(2s)\quad \forall\, \mathrm{Re}(s)>1.
\end{split}
\end{equation}
A l'exception de $\eta$, on déduit par la théorie classique de la fonction Zêta de Riemann que les termes de l'égalité ci-dessus sont prolongeables analytiquement en un voisinage ouvert de $s=0$. Montrons que $\eta$ l'est aussi. On a
\begin{lemma}
 La fonction $\eta$ définie par
\[
\eta(s):=\sum_{n=1}^\infty \frac{\log \Gamma(n)}{n^{2s}}
 -\frac{1}{12}\zeta_\Q(2s+1),
\]
converge pour $\mathrm{Re}(s)>1$ et admet un continuation holomorphe au voisinage de 0 et
\[
\eta(0)= \zeta'_\Q(-1)-\frac{\gamma}{12}-\frac{1}{4}\log (2\pi).
\]

\end{lemma}
\begin{proof}
D'après \cite[8.343.2]{Table}:
\begin{equation}\label{devgamma}
 \log \Gamma(z)=z\log z-z-\frac{1}{2}\log z+\frac{1}{2}\log2\pi+\frac{1}{12z}+R_2(z), \quad \mathrm{Re}(z)>0,
\end{equation}
avec
\[
 |R_2(z)|<\frac{|B_{4}|}{12|z|^{3}\cos^{3}(\frac{1}{2}\arg(z))}\quad \mathrm{Re}(z)>0.
\]
Donc lorsque $z=n$ est  un entier on a
\begin{equation}\label{estimation}
 \sum_{n=1}^\infty \frac{|R_2(n)|}{n^{2s}}\leq \sum_{n=1}^\infty \frac{|B_4|}{12n^{3+2s}}=|B_4|\zeta_\Q(2s+3).
\end{equation}
Par cette estimation et par les propriétés classiques des séries de Bertrand, on a
%\[
 %\begin{split}
%\sum_{n=1}^\infty\frac{\log \Gamma(n)}{n^{2s}}&= \sum_{n=1}^\infty\frac{\log n}{n^{2s-1}}-\sum_{n=1}^\infty\frac{1}{n^{2s-1}}-\sum_{n=1}^\infty\frac{1}{2}\frac{\log n}{n^{2s}}+\sum_{n=1}^\infty\frac{1}{2}\frac{\log2\pi}{n^{2s}}+\sum_{n=1}^\infty\frac{1}{12n^{2s+1}}\\
%&+\sum_{n=1}^\infty \frac{R_2(n)}{n^{2s}}
 %\end{split}
%\]
\[
 \begin{split}
\eta(s)&=\sum_{n=1}^\infty \frac{\log \Gamma(n)}{n^{2s}}-\frac{1}{12n^{2s+1}}\\
&=\sum_{n=1}^\infty\frac{\log n}{n^{2s-1}}-\sum_{n=1}^\infty\frac{1}{n^{2s-1}}-\sum_{n=1}^\infty\frac{1}{2}\frac{\log n}{n^{2s}}+\sum_{n=1}^\infty\frac{1}{2}\frac{\log2\pi}{n^{2s}}+\sum_{n=1}^\infty \frac{R_2(n)}{n^{2s}}\\
&=-\zeta'_\Q(2s-1)-\zeta_\Q(2s-1)+\frac{1}{2}\zeta'_\Q(2s)+\frac{1}{2}\log (2\pi)\zeta_\Q(2s)+\sum_{n=1}^\infty \frac{R_2(n)}{n^{2s}},
 \end{split}
\]
qui converge pour $\mathrm{\mathrm{Re}(s)}>1$ avec un pôle en 1.\\

%\begin{equation}\label{valeureta}
% \eta(s)=\sum_{n=1}^\infty \frac{R_2(n)}{n^{2s}}-\zeta'_\Q(2s-1)-\zeta_\Q(2s-1)+\frac{1}{2}\zeta'_\Q(2s)+\frac{1}{2}\log (2\pi)\zeta_\Q(2s)
%\end{equation}

On a $\zeta_\Q'(2s-1)$, $\zeta_\Q(2s-1)$, $\zeta_\Q'(2s)$ et $\zeta_\Q(2s)$ admettent des prolongements analytiques au voisinage de $ s=0$ et par à l'estimation \eqref{estimation} on conclut que $\eta$ admet un prolongement analytique au voisinage de $s=0$. On notera par la même notation $\eta$ le prolongement de $\eta$ au voisinage de $0$.\\
On a donc
\[
\eta(0)=-\zeta'_\Q(-1)-\zeta_\Q(-1)+\frac{1}{2}\zeta'_\Q(0)+\frac{1}{2}\log (2\pi)\zeta_\Q(0)+\sum_{n=1}^\infty R_2(n).
\]

%De \eqref{estimation}, on déduit que la valeur de $\sum_{n=1}^\infty \frac{\log \Gamma(n)}{n^{2s}}-\frac{1}{12n^{2s+1}}$ en zéro est
%\[
%\begin{split}
%\sum_{n=1}^\infty {R_2(n)}-\zeta_\Q'(-1)-\zeta_\Q(-1)+\frac{1}{2}\zeta'_\Q(0)+\frac{1}{2}\log (2\pi)\zeta_\Q(0)=&-\zeta'_\Q(-1)+\frac{1}{12}-\frac{1}{4}\log (2\pi)\\
%-\frac{1}{4}\log (2\pi)+
%\sum_{n=1}^\infty {R_2(n)}
%\end{split}
%\]

Calculons
\[
 \sum_{n=1}^\infty R_2(n)
\]

%\[
%\lim_{N\mapsto \infty}\bigl(\sum_{n=1}^N{\log \Gamma(n)}-\sum_{n=1}^N n\log n+\frac{N(N+1)}{2}-N\log \sqrt{2\pi}-\sum_{n=1}^N\frac{1}{12n}\bigr)
%\]
On aura besoin des identités suivantes:
\begin{enumerate}
\item

\begin{equation}\label{gamma}
 \gamma=\sum_{n=1}^N\frac{1}{n}-\log N+o(1)
\end{equation}

\item
\begin{equation}\label{simplifie}
 \log(\prod_{n=1}^{N-1}n^{-n})=\sum_{n=1}^{N}\log \Gamma(n)-N\log \Gamma(N)
\end{equation}

%\[
 %\begin{split}
  %\log \bigl(\prod_{n=1}^{N-1} n^{-n}\bigr)&=\log \bigl(\prod_{n=1}^{N-1} n^{N-n-N}\bigr)\\
 %&=\log \bigl(\prod_{n=1}^{N-1}n^{N-n}\bigr)-N\log\bigl( \prod_{n=1}^{N-1} n\bigr)\\
%&=\sum_{n=1}^{N}\log \Gamma(n)-N\log \Gamma(N)
%\end{split}
%\]

\item
\begin{equation}\label{kinkelin}
\log A=\underset{n\to+\infty}{\lim} \log(1^1\cdot 2^2 \cdot N^N)-(\frac{N^2}{2}+\frac{N}{2}+\frac{1}{12})\log N+\frac{N^2}{4},
\end{equation}

et \begin{equation}\label{kinkelinvaleur}
    \log A=-\zeta'(-1)+\frac{1}{12}
   \end{equation}

où $A$ est la constante de \textit{Kinkelin}, voir \cite[p.461-462]{Voros}.\\

\item
\[
 \log \Gamma(N)=(N-\frac{1}{2})\log N-N+\frac{1}{2}\log (2\pi)+\frac{1}{12N}+\mathrm{o}(\frac{1}{N^2})
\]
 voir \cite[p.895]{Table}.

\end{enumerate}
A l'aide de ces dernières formules, on montre que
\[
 \sum_{n=1}^\infty R_2(n)=2\zeta'_\Q(-1)-\frac{1}{12}-\frac{\gamma}{12}+\frac{1}{4}\log (2\pi).
\]
En effet, soit $N\geq 2$, on a
{\allowdisplaybreaks
 \begin{align*}
\sum_{n=1}^N R_2(n)&=\sum_{n=1}^N \Bigr(\log \Gamma(n)-n\log(n) +n+\frac{1}{2}\log n-\log \sqrt{2\pi}\Bigr)-\frac{\gamma}{12}-\frac{\log N}{12}+o(1)\,\, \text{par}\,\eqref{devgamma},\eqref{gamma}\\
&=\sum_{n=1}^N{\log \Gamma(n)}-\sum_{n=1}^N n\log n+\frac{N(N+1)}{2}+\frac{1}{2}(\log \Gamma(N)+\log N)-N\log \sqrt{2\pi}\\
&-\frac{\gamma}{12}-\frac{\log N}{12}+o(1)\\
&=\sum_{n=1}^N{\log \Gamma(n)}+\log \bigl(\prod_{n=1}^{N-1} n^{-n}\bigr)-N\log N+\frac{N(N+1)}{2}+\frac{1}{2}\log \Gamma(N)\\
&-N\log \sqrt{2\pi}-\frac{\gamma}{12}+\frac{5}{12}\log N+o(1)\\
&=2\log \bigl(\prod_{n=1}^{N} n^{-n}\bigr)-N\log N-N\log \Gamma(N)+\frac{N(N+1)}{2}+\frac{1}{2}\log \Gamma(N)-N\log \sqrt{2\pi}\\
&-\frac{\gamma}{12}+\frac{5}{12}\log(N)+o(1),\,\, \text{par}\, \eqref{simplifie}\\
&=-2\log A-({N^2}+{N}+\frac{1}{6})\log N+\frac{N^2}{2}+N\log N+N\log \Gamma(N)\\
&+\frac{N(N+1)}{2}+\frac{1}{2}\log \Gamma(N)-N\log \sqrt{2\pi}-\frac{\gamma}{12}+\frac{5}{12}\log(N)+o(1),\,\, \text{par}\,\eqref{kinkelin}\\
&=-2\log A-({N^2}+{N}+\frac{1}{6})\log N+\frac{N^2}{2}+N\log N+N(N-\frac{1}{2})\log N-N^2\\
&+\frac{1}{2}\log(2\pi)N+\frac{1}{12}+\frac{N^2+N}{2}+\frac{1}{2}(N-\frac{1}{2})\log N-\frac{N}{2}+\frac{1}{4}\log(2\pi)-N\log(\sqrt{2\pi})\\
&+\frac{\gamma}{12}+\frac{5}{12}\log N+\mathrm{o}(\frac{1}{N^2})+o(1),\,\, \text{par}\,\eqref{devgamma}\\
&=-2\log A+\frac{1}{12}-\frac{\gamma}{12}+\frac{1}{4}\log (2\pi)+o(1)\\
&=2\zeta'(-1)-\frac{1}{12}-\frac{\gamma}{12}+\frac{1}{4}\log(2\pi)+o(1),\,\, \text{par}\, \eqref{kinkelinvaleur}.
\end{align*}
}

On en déduit la valeur de $\eta$ en zéro:
\begin{equation}\label{valeureta2}
\begin{split}
\eta(0)&=2\zeta'_\Q(-1)-\frac{1}{12}-\frac{\gamma}{12}+\frac{1}{4}\log (2\pi)\\
&-\zeta'_\Q(-1)-\zeta_\Q(-1)+\frac{1}{2}\zeta'_\Q(0)+\frac{1}{2}\log (2\pi)\zeta_\Q(0)\\
&=\zeta'_\Q(-1)-\frac{\gamma}{12}-\frac{1}{4}\log (2\pi)
\end{split}
\end{equation}

\end{proof}

%+\frac{1}{2}\log (2\pi)\zeta_\Q(2s)-\log( 2) \zeta_\Q(2s-1)-\zeta'_\Q(2s-1)+\frac{1}{2}\zeta'_\Q(2s)

On en déduit que
\[
\begin{split}
 \lim_{s\mapsto 0}\bigl(-B_\mathcal{N}(s)+P_\mathcal{N}(s)\bigr)&=\eta(0)-\frac{1}{2}\log (2\pi)\zeta_\Q(0)+\log( 2) \zeta_\Q(-1)+\zeta'_\Q(-1)-\frac{1}{2}\zeta'_\Q(0)\\
&=2\zeta_\Q'(-1)-\frac{\gamma}{12}+\frac{1}{4}\log (2\pi)-\frac{1}{12}\log 2.
\end{split}
\]

On pose, voir les notations avant  \eqref{regularisation}:
\[
F_\mathcal{N}(s):=-\frac{s^2}{\Gamma(s+1)}\zeta_\Q(2s+1)\int_0^\infty \frac{t^{s-1}}{2\pi i}\int_{\Lambda_c}\frac{e^{-z^2 t}}{-z}V_1(iz)dz dt, \quad \mathrm{Re}(s)>1,
\]
et on montre le lemme suivant,
\begin{lemma}
On a
\[
F_\mathcal{N}(s)=\frac{1}{\Gamma(s+1)}\Bigl(\frac{1}{2}+\gamma s+so(1) \Bigr)\frac{\Gamma(s+\frac{1}{2})}{12\sqrt{\pi}}(1-7s ),
\]
pour tout $|s|\ll1$ et on a
\[
F_{\mathcal{N}}(0)=\frac{1}{24}\quad\text{et} \quad F_{\mathcal{N}}'(0)=\frac{1}{12}\Bigl(\gamma-\log 2-\frac{7}{2}\Bigr).
\]
\end{lemma}
\begin{proof}

Soit $\mathrm{Re}(s)>1$, on a
\[
\begin{split}
F_{\mathcal{N}}(s)&=\frac{s^2}{\Gamma(s+1)}\zeta_\Q(2s+1)\biggl(\frac{3}{8}\int_0^\infty \frac{t^{s-1}}{2\pi i}\int_{\Lambda_c}\frac{e^{-\lambda t}}{-\lambda}\frac{1}{(1-\lambda)^\frac{1}{2}}d\lambda dt\\
&-\frac{7}{24}\int_0^\infty \frac{t^{s-1}}{2\pi i}\int_{\Lambda_c}\frac{e^{-\lambda t}}{-\lambda}\frac{1}{(1-\lambda)^\frac{3}{2}}d\lambda dt\biggr)\\
&=\frac{s^2}{\Gamma(s+1)}\zeta_\Q(2s+1)\biggl(\frac{3}{8}\int_0^\infty t^{s-1}\frac{1}{\pi} \sin(\pi \frac{1}{2})\Gamma(\frac{1}{2})\Gamma(\frac{1}{2},t)dt\\
&-\frac{7}{24} \int_0^\infty t^{s-1}\frac{1}{\pi} \sin(\pi \frac{3}{2})\Gamma(-\frac{1}{2})\Gamma(\frac{3}{2},t)dt\biggr)\\
&=\frac{s^2}{\Gamma(s+1)}\zeta_\Q(2s+1)\biggl(\frac{3}{8\sqrt{\pi}}\frac{1}{s}\Gamma(s+\frac{1}{2})-\frac{7}{12\sqrt{\pi}}\frac{1}{s}\Gamma(s+\frac{3}{2})\biggr)\\
&=\frac{s}{\Gamma(s+1)}\zeta_\Q(2s+1)\Bigl(\frac{3}{8\sqrt{\pi}}- \frac{7}{12\sqrt{\pi}}(s+\frac{1}{2}) \Bigr)\Gamma(s+\frac{1}{2})\\
&= \frac{1}{\Gamma(s+1)}\Bigl(\frac{1}{2}+\gamma s+so(1) \Bigr)\frac{\Gamma(s+\frac{1}{2})}{12\sqrt{\pi}}(1-7s ).
\end{split}
\]
où l'on a utilisé les formules suivantes:

\[
 \begin{split}
\frac{1}{2\pi i} \int_{\Lambda_c}\frac{e^{-\lambda t}}{-\lambda}\frac{1}{(1-\lambda)^a}d\lambda &= - \frac{1}{2\pi i}e^{-t} \int_{\Lambda_c-1}\frac{e^{-u t}}{1+u}\frac{1}{(-u)^a}du\\
&=\frac{1}{\pi} \sin(\pi a)\Gamma(1-a)\Gamma(a,t),
\end{split}
\]
et
\[
 \begin{split}
\int_{0}^\infty t^{s-1}\Gamma(a,t)dt &=\Bigl[\frac{t^s}{s}\Gamma(a,t) \Bigr]_0^\infty+\int_{0}^\infty \frac{t^s}{s}e^{-t} t^{a-1}dt\\
&=\frac{1}{s}\Gamma(a+s).
 \end{split}
\]

avec  $\Gamma(a,t)$ est la fonction Gamma incomplète. (Pour montrer cette formule, on suit la méthode  pour la représentation de l'inverse de $\Gamma$ comme une intégrale le long d'un contour, voir \cite[12.22]{Whittaker}  et on utilise la formule de \cite[8.353.3]{Table})).\\

%\[
 %\begin{split}
%F'(s)&=(o(1)+s\cdot o'(1))\frac{1}{\Gamma(s+1)}\bigl(\frac{3}{8\sqrt{\pi}}- \frac{7}{12\sqrt{\pi}}(s+\frac{1}{2}) \bigr)\Gamma(s+\frac{1}{2})\\
%&-(\frac{1}{2}+\gamma+s\cdot o(1) )\frac{\Gamma'(s+1)}{\Gamma(s+1)^2}\bigl(\frac{3}{8\sqrt{\pi}}- \frac{7}{12\sqrt{\pi}}(s+\frac{1}{2}) \bigr)\Gamma(s+\frac{1}{2})\\
%&-\frac{7}{12\sqrt{\pi}}(\frac{1}{2}+\gamma+s\cdot o(1) )\frac{1}{\Gamma(s+1)}\Gamma(s+\frac{1}{2})\\
%&+(\frac{1}{2}+\gamma+s\cdot o(1) )\frac{1}{\Gamma(s+1)}\bigl(\frac{3}{8\sqrt{\pi}}- \frac{7}{12\sqrt{\pi}}(s+\frac{1}{2}) \bigr)\Gamma'(s+\frac{1}{2})
% \end{split}
%\]

On conclut que $F_{\mathcal{N}}$ est analytique en un voisinage ouvert de  $s=0$,  et on a
\[
 F_{\mathcal{N}}(0)=\frac{1}{24}.
\]

%On note par la même lettre le prolongement analystique de $\zeta_\Q$ au voisinage de 0.\\

%Il existe une fonction de classe $\cl$ au voisinage de 0 qui soit un petit o de 1... on la notera $o(s)$. telle que

%\[
 %F(s)=\frac{1}{\Gamma(s+1)}(\frac{1}{2}+\gamma s+so(1) )\frac{\Gamma(s+\frac{1}{2})}{12\sqrt{\pi}}(1-7s )
%\]
Soit $s\neq 0$, on a
\[
 \begin{split}
F_{\mathcal{N}}'(s)=&  -\frac{\Gamma'(s+1)}{\Gamma(s+1)^2}\Bigl(\frac{1}{2}+\gamma s +s o(1) \Bigr)\frac{\Gamma(s+\frac{1}{2})}{12\sqrt{\pi}}(1-7s)\\
&+\frac{1}{\Gamma(s+1)}\Bigl(\gamma+o(1)+s o'(1)\Bigr)\frac{\Gamma(s+\frac{1}{2})}{12\sqrt{\pi}}(1-7s)\\
&+\frac{1}{\Gamma(s+1)}\Bigl(\frac{1}{2}+\gamma s +s o(1)\Bigr)\frac{\Gamma'(s+1)}{12\sqrt{\pi}}(1-7s)\\
&+\frac{1}{\Gamma(s+1)}\Bigl(\frac{1}{2}+\gamma s
+s o(1)\Bigr)\frac{\Gamma(s+1)}{12\sqrt{\pi}}(-7).
 \end{split}
\]
Rappelons, voir par exemple \cite[8.366.1, 8.366.2 ]{Table}, que
\[
 \Gamma'(1)=-\gamma,\quad \frac{1}{2}\frac{\Gamma'(\frac{1}{2})}{\Gamma(\frac{1}{2})}=-\log 2-\frac{\gamma}{2},
\]
 Par suite
\[
 F'_{\mathcal{N}}(0)=\frac{1}{12}(\gamma-\log 2-\frac{7}{2} ).
\]

\end{proof}

D'après \eqref{regularisation},  la fonction  $z_{\mathcal{N}}$ (voir \eqref{zetaN}) s'écrit au voisinage de $s=0$ sous la forme suivante:
\[
 z_{\mathcal{N}}(s)=\frac{s}{\Gamma(s+1)}\Bigl(\gamma A_{\mathcal{N}}(s)-B_\mathcal{N}(s)-\frac{1}{s}A_{\mathcal{N}}(s)+P_\mathcal{N}(s)\Bigr)+F_{\mathcal{N}}(s)+s^2 g(s),
\]
où $g$ est une fonction analytique au voisinage  zéro. Par les calculs précédents, on obtient:
\[
\begin{split}
 z_{\mathcal{N}}(0)&=-A_{\mathcal{N}}(0)+F_{\mathcal{N}}(0)\\
&=-\frac{1}{2}\zeta_\Q(-1)+\frac{1}{4}\zeta_\Q(0)+\frac{1}{24}\\
&=\frac{1}{24}-\frac{1}{8}+\frac{1}{24}\\
&=-\frac{1}{24}.
\end{split}
\]
et
\[
 \begin{split}
  z'_{\mathcal{N}}(s)&=
\frac{1}{\Gamma(s+1)}\Bigl(\gamma A_{\mathcal{N}}(s)-B_{\mathcal{N}}(s)-\frac{1}{s}A_{\mathcal{N}}(s)+P_{\mathcal{N}}(s)+\gamma sA_{\mathcal{N}}'(s)-sB_{\mathcal{N}}'(s)+\frac{1}{s}A_{\mathcal{N}}(s)\\
&-A'(s)+sP'(s)  \Bigr)-\frac{\Gamma'(s+1)}{\Gamma(s+1)^2}s\bigl( \gamma A_{\mathcal{N}}(s)-B_{\mathcal{N}}(s)-\frac{1}{s}A_{\mathcal{N}}(s) +P_{\mathcal{N}}(s)\bigr)\\
&+F'_{\mathcal{N}}(s)+\frac{d}{ds}(s^2 g(s)),
 \end{split}
\]
donc
\[
\begin{split}
z'_{\mathcal{N}}(0)&=-A'_{\mathcal{N}}(0)+\lim_{s\mapsto 0}(-B_{\mathcal{N}}(s) +P_{\mathcal{N}}(s))+ \lim_{s\mapsto 0}s(-B'_{\mathcal{N}}(s)+P'_{\mathcal{N}}(s) )+\frac{1}{12}\bigl(\gamma-\log 2-\frac{7}{2} \bigr)\\
&=-\zeta'_\Q(-1)-\frac{1}{4}\log (2\pi)
+2\zeta_\Q'(-1)-\frac{\gamma}{12}+\frac{1}{4}\log (2\pi)-\frac{1}{6}\log 2+\frac{1}{12}\bigl(\gamma-\log 2-\frac{7}{2} \bigr)\\
&=\zeta'_\Q(-1)-\frac{1}{6}\log 2-\frac{7}{24}.
\end{split}
\]

Rappelons que
\[
\zeta_{\mathcal{N}}(s)=2z_\mathcal{N}(s)+\sum_{k\geq 1}\frac{1}{{j'}_{0,k}^{2s}}.
\]
Si l'on pose $
 \zeta_{J'_0}(s)=\sum_{k\geq 1}\frac{1}{{j'}_{0,k}^{2s}}$ alors la contribution de $J'_0$ vaut
\[
 \zeta_{J'_0}(0)=-\frac{1}{2}(1+\frac{1}{2})=-\frac{3}{4},\quad \zeta'_{J'_0}(0)=\frac{1}{2}\log 2-\frac{1}{2}\log \pi.
\]
(C'est une application de \eqref{valeurszeta}, sachant que $I_0'(z)=\frac{z}{4}+o(z)$, pour $|z|\ll1$ et $I_0'(z)=\frac{e^z}{\sqrt{2\pi z}}(1+O(\frac{1}{z}))$ si $|z|\gg1$).\\

On obtient:
\[
 \zeta_{\mathcal{N}}(0)=-(2)\frac{1}{24}-\frac{3}{4}=-\frac{5}{6},
\]
et
\[
\begin{split}
 \zeta_{\mathcal{N}}'(0)&=2z'_N(0)+\bigl( \frac{1}{2}\log 2-\frac{1}{2}\log \pi \bigr)\\
&=2\zeta'_\Q(-1)+\frac{1}{6}\log 2 -\frac{1}{2}\log \pi -\frac{7}{12}.
\end{split}
\]
On conclut que,  voir \eqref{Dirichlet}
\[
\zeta_0(0)=-\frac{2}{3},\quad \zeta'_0(0)=4\zeta'_\Q(-1)-\frac{1}{6}+\frac{1}{3}\log 2.
\]
Ce qui termine la preuve du résultat cherché.

\end{proof}

\subsubsection{Régularisation de la fonction zêta  $\zeta_{\Delta_{\overline{\mathcal{O}(m)}_\infty}}$ pour $m\geq 1$}
Ce paragraphe est consacré à l'étude et au calcul explicite du déterminant régularisé de la fonction Zêta:
\[
 \zeta_{\Delta_{\overline{\mathcal{O}(m)}_\infty}},\quad \forall m\geq 1.
\]
L'idée de la démonstration du théorème \eqref{lavaleurdezeta} consiste en premier temps à introduire une suite de fonctions analytiques $ (G_n)_{n\in \Z}$ qui vérifie les hypothèses du \eqref{paragrapheG} et d'établir, tenant compte des notations de \eqref{paragrapheG} et du théorème \eqref{spectreOm}, que :
\begin{align*}
  \zeta_{\Delta_{\overline{\mathcal{O}(m)}_\infty}}&=4^s z_m(s),
\end{align*}
où
\begin{align*}
 z_m(s)&=2\sum_{n\geq 1}\zeta_{G_{n+m}}(s)+ 2\sum_{k=0}^{\frac{m}{2}-1}\zeta_{G_k}(s)+\zeta_{G_\frac{m}{2}}(s),\quad \text{si}\; m\;\text{pair},\\
 z_m(s)&=2\sum_{n\geq 1}\zeta_{G_{n+m}}(s)+ 2\sum_{k=0}^{\frac{m-1}{2}}\zeta_{G_k}(s),\quad\quad \quad\quad \quad \text{si}\; m\;\text{impair}.
\end{align*}
Puis de démontrer que $\bigl(\zeta_{G_{n+m}}\bigr)_{n\in \N^\ast}$ est une famille régularisable, voir définition \eqref{régularisable} et d'appliquer ensuite le théorème \eqref{regularisation} pour conclure.\\

On introduit, pour tout $n\in \Z$, la fonction suivante
\[
 G_n(z):=I_{n+1}(z)I_{n-m}(z)+I_n(z)I_{n-m-1}(z)\quad \forall\, z\in \C.
\]
où $I_\ast$ désigne la fonction de Bessel modifiée d'ordre $\ast$. \\

Etablissons d'abord la formule précédente reliant $\zeta_{\Delta_{\overline{\mathcal{O}(m)}_\infty}}$ et $z_m$. Cela résulte de la définition $\zeta_{\Delta_{\overline{\mathcal{O}(m)}_\infty}}$ et de celle $z_m$ et du lemme suivant:
\begin{lemma}
On a
\begin{enumerate}
\item Pour tout $n\in \Z$,
\[
G_n(z)=\frac{1}{i^{2n-m-1}}L_n(iz)\quad \forall\, z\in \C
\]
%\item Les zéros de la fonction $G_n(\sqrt{-\la})$ est l'ensemble des réels positifs dont la racine est un élement de $Z_n$, en effet, \[
%\begin{split}
%G_n(\sqrt{-\la})&=G_n(\pm i\sqrt{|\la|})\\
%&=\frac{1}{i^{2n-m-1}}\bigl(J_n(\mp \sqrt{|\la|})J_{n-m-1}(\mp \sqrt{|\la|})-J_{n+1}(\mp \sqrt{|\la|})J_{n-m}(\mp \sqrt{|\la|}) \bigr).
%\end{split}
%\]
et donc, $\{z\in \C\, |\, G_n(iz)=0\}=Z_n$ (voir \eqref{Zeros2}).
\item
\begin{equation}\label{symetrieG}
 G_n(z)=G_{m-n}(z)q\quad \forall\, z\in \C,
\end{equation}
\item
\begin{equation}\label{symetrieG}
 G_n(-z)=(-1)^{m+1}G_n(z)\quad \forall\, z\in \C.
\end{equation}
\item Si $n\geq m+1$,
\[
G_n(z)=\frac{z^{2n-m-1}}{2^{2n-m-1}\Gamma(n+1)\Gamma(n-m)}+o(z^{2n-m-1})\quad |z|\ll1.
\]
\end{enumerate}
\end{lemma}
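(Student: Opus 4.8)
The plan is to derive all four assertions from standard identities for the modified Bessel functions $I_\nu$ of integer order, used together with the expression \eqref{encoreeq2} for $L_n$ supplied by Lemma \eqref{Zeros2}. The four ingredients I shall invoke are: the connection formula $J_\nu(iz)=i^\nu I_\nu(z)$ (equivalently $I_\nu(z)=i^{-\nu}J_\nu(iz)$), valid without branch ambiguity since all orders occurring here are integers; the parity $I_\nu(-z)=(-1)^\nu I_\nu(z)$ for integer $\nu$; the symmetry $I_{-k}(z)=I_k(z)$ for $k\in\Z$; and the small-argument expansion $I_\nu(z)=\frac{(z/2)^\nu}{\Gamma(\nu+1)}\bigl(1+O(z^2)\bigr)$ as $z\to 0$ for $\nu$ not a negative integer. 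Each of these follows at once from the defining power series of $I_\nu$ and $J_\nu$.

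For assertion (1), I would substitute $z\mapsto iz$ in \eqref{encoreeq2} and replace each $J_\nu(iz)$ by $i^\nu I_\nu(z)$. Collecting the powers of $i$ in the two products $J_n(iz)J_{n-m-1}(iz)$ and $J_{n+1}(iz)J_{n-m}(iz)$ produces a common factor $i^{2n-m-1}$, after which the bracket reassembles into $I_{n+1}(z)I_{n-m}(z)+I_n(z)I_{n-m-1}(z)=G_n(z)$; this gives the claimed proportionality $G_n(z)=i^{-(2n-m-1)}L_n(iz)$ once the two hidden factors of $i^2=-1$ are tracked carefully. For the consequence on the zeros, evaluating this identity at $iz$ yields $G_n(iz)=c\,L_n(-z)$ with $c\neq 0$; and since the parity $J_\nu(-z)=(-1)^\nu J_\nu(z)$ forces $L_n(-z)=(-1)^{m+1}L_n(z)$, the set $Z_n$ of \eqref{Zeros1} is stable under $z\mapsto -z$, so $\{z: G_n(iz)=0\}$ is exactly $Z_n$.

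Assertions (2) and (3) are immediate consequences of the two symmetries of $I_\nu$. For (2), writing out $G_{m-n}(z)=I_{m-n+1}(z)I_{-n}(z)+I_{m-n}(z)I_{-n-1}(z)$ and applying $I_{-k}=I_k$ to each factor turns this into $I_{m-n+1}(z)I_n(z)+I_{m-n}(z)I_{n+1}(z)$, which is precisely $G_n(z)$ after rewriting $I_{n-m}=I_{m-n}$ and $I_{n-m-1}=I_{m-n+1}$. For (3), the parity $I_\nu(-z)=(-1)^\nu I_\nu(z)$ makes each of the two products in $G_n(-z)$ acquire the common factor $(-1)^{2n-m-1}=(-1)^{m+1}$, which then factors out.

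For assertion (4), note that when $n\geq m+1$ the four orders $n+1,\ n-m,\ n,\ n-m-1$ are all nonnegative integers, so each $I_\nu$ is given by its power series starting at $(z/2)^\nu/\Gamma(\nu+1)$. Multiplying the leading terms, the product $I_n(z)I_{n-m-1}(z)$ contributes at total degree $2n-m-1$ with coefficient $\frac{1}{2^{2n-m-1}\Gamma(n+1)\Gamma(n-m)}$, whereas $I_{n+1}(z)I_{n-m}(z)$ contributes only from degree $2n-m+1$ onward, hence is $o(z^{2n-m-1})$; this gives the stated expansion. The only genuinely delicate step is the sign-and-exponent bookkeeping in part (1) — in particular confirming the $z\mapsto -z$ symmetry of $Z_n$, so that one lands on $Z_n$ rather than its reflection — whereas the remaining three parts are routine applications of the listed Bessel identities.
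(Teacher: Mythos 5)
Your proposal is correct and follows essentially the same route as the paper, which likewise derives everything from the power series of $I_\nu$, the relation $I_\nu(z)=i^{-\nu}J_\nu(iz)$, and the symmetry $I_{-k}=I_k$ (the paper only writes out item (2) explicitly and declares the rest immediate). One harmless bookkeeping point: carrying out the substitution in \eqref{encoreeq2} actually yields $L_n(iz)=-i^{2n-m-1}G_n(z)$, i.e.\ an extra overall sign relative to the formula as printed, but since only the nonvanishing of the constant of proportionality (together with the parity $L_n(-z)=(-1)^{m+1}L_n(z)$, which you correctly use) is needed to identify $\{z\,|\,G_n(iz)=0\}$ with $Z_n$, nothing downstream is affected.
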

\begin{proof}
 Cela découle directement des propriétés des fonctions de Bessel. En effet,  1) résulte de la définition de $I_n$ et on a
\[
 \begin{split}
  G_{n}(z)&=I_{n+1}(z)I_{n-m}(z)+I_{n}(z)I_{n-m-1}(z)\\
&=I_{-n-1}(z)I_{-n+m}(z)+I_{-n}(z)I_{-n+m+1}(z),\quad \cite[9.6.6 ]{Table2}\\
&=I_{(m-n)+1}(z)I_{(m-n)-m}(z)+I_{(m-n)}(z)I_{(m-n)-m-1}(z)\\
&=G_{m-n}(z).
 \end{split}
\]

\end{proof}

Avec les notations du paragraphe \eqref{paragrapheG}, on considère la famille de fonctions Zêta suivante \[\bigl\{\zeta_{\widetilde{G}_{n}}\,\bigl|\, n\geq 1 \bigr\},\]
 où $\widetilde{G}_{n}:=G_{n+m}$.\\%{\footnotesize Rappelons que $G_n$ vérifie la propriété \eqref{symetrieG}}.\\

  On se propose  d'étudier la fonction
\[
\zeta_{\geq m+1}(s):=\sum_{n\geq 1}\zeta_{\widetilde{G}_{n}}.
\]
Plus précisément, on va montrer que  $\{\zeta_{G_{n+m}}\,|\, n\geq 1 \}$ est régularisable, voir définition \eqref{régularisable}, et on déterminera les valeurs  $\zeta_{\geq m+1}(0)$ et $\zeta_{\geq m+1}'(0)$ ce qui nous permettra de  compléter la démonstration de \eqref{lavaleurdezeta}.

Commençons par montrer que cette famille de fonctions  vérifie  les hypothèses de la définition \eqref{régularisable}.
C'est à dire, on doit vérifier que
\[
p_n(z):=\frac{\widetilde{G}_n(iz)}{l_n z^{e_n}}=\prod_{k\geq 1}^\infty\Bigl(1-\frac{z^2}{\la_{n,k}^2} \Bigr)\quad \forall\, z\in \C,
\]
en plus des conditions de la définition \eqref{régularisable} où $l_n, e_n, a_n,b_n,w,r$ seront calculés explicitement (Notons que $l_n$ et $e_n$ sont déjà donnés dans le lemme précédent).\\

Montrons le premier point, c'est à dire que
\[
\widetilde{G}_n(iz)=l_n z^{e_n}\prod_{k\geq 1}^\infty\Bigl(1-\frac{z^2}{\la_{n+m,k}^2} \Bigr),
\]
 Il suffit, par le théorème de factorisation de Weierstrass,  de montrer que
\[
\sum_{k=1}^\infty\frac{1}{\la_{n,k}^2}<\infty\quad\forall\, n\in \N^\ast.
\]
On va montrer que c'est une conséquence de
\[
\sum_{k=1}^\infty\frac{1}{j_{n,k}^2}<\infty\quad \text{et}\quad \sum_{k=1}^\infty\frac{1}{j_{n+m,k}^2}<\infty,
\]
où $j_{\ast,k}$ désigne le $k$-ème zéro positif de la fonction Bessel $J_\ast$ (Rappelons que la convergence de ces
sommes est un résultat classique de la théorie des fonctions de Bessel, voir \cite[§ 15]{Watson} ).\\

Pour cela, on a besoin du lemme technique suivant:
\begin{lemma}
Soient $f$ et $g$ deux fonctions réelles de classe $\mathcal{C}^1$ définies sur un intervalle ouvert non vide $I$ tel que $0\notin I$.

On suppose que les zéros de $f$ et $g$ sont simples (c'est à dire, si$f(\al)=0$ alors $f'(\al)\neq 0$). Notons par $Z(f)$ (resp. $Z(g)$) l'ensemble des zéros de $f$ (resp. de $g$) sur $I$ et suppose en plus que
\[
Z(f)\cap Z(g)=\emptyset.
\]
Soit $m\in \N$ et on pose
\[
L(x)=x^m\frac{d}{dx}\bigl(x^{-m}f(x)g(x) \bigr)\quad \forall\, x\in I.
\]
Si $\la_1<\la_2$ sont deux zéros consécutifs de $L$ alors
\[
\mathrm{Card}\Bigl(]\la_1,\la_2[ \cap\bigl(Z(f)\cup Z(g) \bigr)\Bigr)\leq 1.
\]
\end{lemma}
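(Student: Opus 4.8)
The plan is to reduce the statement to a single application of Rolle's theorem. First I would introduce the auxiliary function $h(x) := x^{-m} f(x) g(x)$, which is well defined and of class $\mathcal{C}^1$ on $I$ precisely because $0 \notin I$. By the very definition of $L$ we have $L(x) = x^m h'(x)$, and since $x^m$ never vanishes on $I$, the zeros of $L$ coincide exactly with the zeros of $h'$. In particular, the hypothesis that $\la_1 < \la_2$ are two \emph{consecutive} zeros of $L$ translates into: $h'$ has no zero in the open interval $]\la_1,\la_2[$.

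Next I would identify the set whose cardinality must be bounded. Since $x^{-m} \neq 0$ on $I$, a point of $I$ is a zero of $h$ if and only if it is a zero of $fg$, that is, if and only if it lies in $Z(f) \cup Z(g)$; here the assumption $Z(f) \cap Z(g) = \emptyset$ guarantees these are genuinely the (transversal) zeros of $fg$. Thus $]\la_1,\la_2[ \,\cap\, (Z(f) \cup Z(g))$ is precisely the set of zeros of $h$ lying strictly between $\la_1$ and $\la_2$, and the assertion to prove becomes the statement that $h$ has at most one zero in $]\la_1,\la_2[$.

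I would then argue by contradiction. Suppose $h$ had two distinct zeros $a < b$ with $\la_1 < a < b < \la_2$. Applying Rolle's theorem to $h$ on $[a,b]$ (legitimate, as $h$ is $\mathcal{C}^1$ hence continuous on $[a,b]$ and differentiable on $]a,b[$) yields a point $c \in \,]a,b[$ with $h'(c) = 0$. Then $L(c) = c^m h'(c) = 0$ with $\la_1 < c < \la_2$, which contradicts the fact that $\la_1$ and $\la_2$ are consecutive zeros of $L$. Hence $h$ admits at most one zero in $]\la_1,\la_2[$, which is exactly the claim.

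There is no serious obstacle here; the only subtle point worth a remark is the role of the simplicity and disjointness hypotheses. They are not needed for the upper bound itself, which Rolle alone delivers, but they guarantee that the bound is sharp. Indeed, expanding $L(x) = (f'g + fg') - \frac{m}{x} fg$ and evaluating at a simple zero $a$ of, say, $f$ (so $f(a) = 0$, $f'(a) \neq 0$, and $g(a) \neq 0$ by disjointness) gives $L(a) = f'(a)g(a) \neq 0$. Thus a single zero of $fg$ can legitimately lie strictly between two consecutive zeros of $L$ without being a zero of $L$, so that the bound $1$ cannot be lowered to $0$, and the statement is optimal.
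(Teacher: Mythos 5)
Your proof is correct, and it is genuinely simpler than the one in the paper. You reduce everything to a single application of Rolle's theorem to $h(x)=x^{-m}f(x)g(x)$: two zeros of $h$ (equivalently, two points of $Z(f)\cup Z(g)$) strictly between $\la_1$ and $\la_2$ would force a zero of $h'$, hence of $L$, strictly between two consecutive zeros of $L$. The paper opens with exactly this observation (``$L$ s'annule entre deux �l�ments de $Z(f)\cup Z(g)$'') but does not exploit it as the whole argument; instead it runs a sign analysis, taking $\al$ minimal in $(Z(f)\cup Z(g))\cap\,]\la_1,\la_2[$ with, say, $f(\al)=0$, evaluating $L(\al)=f'(\al)g(\al)$ and $L(\beta)=g'(\beta)f(\beta)$ at a hypothetical second point $\beta$, and contradicting the constancy of the sign of $L$ on $]\la_1,\la_2[$ via the simplicity of the zeros and the disjointness $Z(f)\cap Z(g)=\emptyset$. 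Your route buys economy and generality: as you correctly point out, the simplicity and disjointness hypotheses are not needed for the upper bound itself, only to guarantee (via $L(\al)=f'(\al)g(\al)\neq 0$) that points of $Z(f)\cup Z(g)$ are never zeros of $L$, which is what the paper actually uses later to upgrade the inequality to an equality on the intervals $[\la_{n+m,k},\la_{n+m,k+1}]$. The paper's sign analysis, by contrast, tracks which of $f$ or $g$ vanishes first, information that is discarded in the final statement; nothing is lost by your shortcut.
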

\begin{proof}
Commençons par remarquer que $L$ s'annule entre deux éléments de $Z(f)\cup Z(g)$ et que $Z(L)\cap\bigl(Z(f)\cup Z(g) \bigr)=\emptyset$.\\

Supposons qu'il existe $\al\in \bigl(Z(f) \cup Z(g)\bigr)\cap ]\la_1,\la_2[$ qui soit minimal pour cette propriété et que $f(\al)=0$. Supposons par l'absurde qu'il existe $\beta\in Z(g)$ tel que $\la_1<\al<\beta<\la_2$ minimal pour cette propriété, donc  $g$ est de signe constant sur $[\la_1,\beta]$. On a
\begin{align*}
L(\al)&=-m\al^{-1}f(\al) g(\al)+f'(\al)g(\al)+f(\al)g'(\al)=f'(\al)g(\al),\\
L(\beta)&=-m\al^{-1}f(\beta) g(\beta)+f'(\beta)g(\beta)+f(\beta)g'(\beta)=g'(\beta)f(\beta),
\end{align*}
Comme $L$ est de signe constant sur $]\la_1,\la_2[$ (puisque continue), alors
\[
f'(\al)g(\al)g'(\beta)f(\beta)>0,
\]
ce qui est impossible puisqu'on montre que
\begin{align*}
f'(\al)f(\beta)&=\lim_{x\mapsto 0^+}\frac{f(\al+x)f(\beta)}{x}>0,\\
g'(\beta)g(\al)&=\lim_{x\mapsto 0^+}\frac{g(\beta-x)g(\al)}{-x}<0.\\
\end{align*}
On conclut que soit $\la_1<\al<\la_2<\beta$ ou bien il existe $\al'\in Z(f)$ tel que $\la_1<\al<\al'<\la_2$. Si l'on est dans la dernière situation, on obtient comme avant:
\[
f'(\al)g(\al)f'(\al')g(\al')>0,
\]
Puisque $g$ est de signe constant sur $[\al,\al']$ alors
\[
f'(\al)f'(\al')>0,
\]
ce qui impossible puisque
\begin{align*}
f'(\al)f'(\al')&=\lim_{x\mapsto 0^+}\frac{f(\al+x)f(\al'-x)}{-x^2}<0.
\end{align*}

\end{proof}

On va appliquer ce lemme à $f=J_n$ et $g=J_{n+m}$, dans ce cas
\[
 L=-L_{n+m}.
\]
 On obtient donc avec les notations introduites avant:
\[
\mathrm{Card}\Bigl( [\la_{{}_{{n+m},k}},\la_{{}_{{n+m},k+1}}\bigr]\cap \bigl(Z(J_n)\cup Z(J_{n+m})\bigr)\Bigr)\leq 1,\quad \forall k\geq 1.
\]
Mais d'après \eqref{deriveeLnormeCopie} on a
\[
L'_{n+m}(\la)=\frac{J_{n}(\la)}{J_{n+m}(\la)}\|\vf_\la\|^2,\quad \forall \la\in Z_{n+m}.
\]
qui donne que
\[
0>L'_{n+m}(\la_{{}_{n+m,k}})L'_{n+m}(\la_{{}_{n+m,k+1}})=\frac{J_{n+m}(\la_{{}_{n+m,k}})}{J_{n}(\la_{{}_{n+m,k}})}\frac{J_{n+m}(\la_{{}_{n+m,k+1}})}{J_{n}(\la_{{}_{n+m,k+1}})}\|\vf_{\la_{{}_{n+m,k+1}}}\|^2\|\vf_{\la_{{}_{n+m,k+1}}}\|^2.
\]
(l'inégalité à droite résulte du fait que $L_{n+m}$ est $\mathcal{C}^1$ et de signe constant sur $]\la_{{}_{n+m,k}} ,\la_{{}_{n+m,k+1}}[$).\\

Par suite
\[
\mathrm{Card}\Bigl( [\la_{{}_{n+m,k}} ,\la_{{}_{n+m,k+1}}]\cap \bigl(Z(J_n)\cup Z(J_{n+m})\bigr)\Bigr)= 1\quad
\forall\, k\geq 1.
\]
Maintenant on peut montrer que
\[
\sum_{k=1}^\infty \frac{1}{\la_{n,k}^2}<\infty,
\]
En posant \[\widetilde{\la}_{n,k}:=\max\Bigl\{\bigl(Z(J_n)\cup Z(J_{n+m})\bigr)\cap [0,\la_{n,k}] \Bigr\}\quad \forall\, k\in \N,\] alors,  par le
lemme précédent, on a
\[
\widetilde{\la}_{n,k}<\la_{n,k} <\widetilde{\la}_{n,k+1}<\la_{n,k+1}\quad \forall\, k\geq 1.
\]
et
\[
\sum_{k=1}^\infty \frac{1}{\widetilde{\la}_{n,k}^{2}}\leq \sum_{k=1}^\infty \frac{1}{j_{n,k}^2}+\sum_{k=1}^\infty
\frac{1}{j_{n+m,k}^2}.
\]
Or le terme de droite dans cette inégalité est fini, voir \cite[§. 15]{Watson}.
On conclut que\[
\sum_{k=1}^\infty \frac{1}{\la_{n,k}^2}<\infty\quad \forall \,n\in \Z.
\]

%Comme la famille $\{G_n\,|\, n\in \Z\}$ vérifie  la propriété \eqref{symetrieG}, on va considérer dans la suite de fonctions suivante

Vérifions maintenant  \eqref{lowerbound}. On note par $\la_{n+m,1}$ la première valeur positive non nulle de $Z_{n+m}$,
montrons que
\[
\inf_{n\in \N^\ast}\frac{\la_{n+m,1}^2}{n^2}\neq 0.
\]

Montrons d'abord le lemme suivant:
\begin{lemma}\label{petitlemme}
Soit $a$ un réel non nul. On a
\[
\int_0^1 xJ_n(ax)^2 dx=\frac{1}{2}\bigl(J_n'(a)^2+(1-\frac{n^2}{a^2})J_n(a)^2 \bigr)
\]
en particulier, soit $n\neq 0$, si l'on note par $j_{n,1}$ (resp. $j'_{n,1}$) le premier zéro positif non nul de $J_n$
(resp. de $J'_n$) alors on a
\[
j_{n,1}>j'_{n,1}>n.
\]
%{\footnotesize Rappelons qu'on a noté dans le texte $j_{n,k}'$ par $c_{n,k}$. }
\end{lemma}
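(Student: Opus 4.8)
Le plan est d'\'etablir l'identit\'e int\'egrale par la technique classique de Lommel, puis d'en d\'eduire les deux in\'egalit\'es. D'abord, j'\'ecrirais l'\'equation de Bessel pour $u_\alpha(x):=J_n(\alpha x)$ sous forme de Sturm--Liouville,
\[
\bigl(x\,u_\alpha'\bigr)'+\Bigl(\alpha^2 x-\frac{n^2}{x}\Bigr)u_\alpha=0,
\]
et, pour deux param\`etres $\alpha\neq\beta$, je multiplierais l'\'equation de $u_\alpha$ par $u_\beta$ et r\'eciproquement, puis je soustrairais; le membre de gauche se r\'eduit alors \`a la d\'eriv\'ee totale $\frac{d}{dx}\bigl[x(u_\beta u_\alpha'-u_\alpha u_\beta')\bigr]$. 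En int\'egrant sur $[0,1]$ et en utilisant que le terme de bord en $x=0$ s'annule (car $J_n$ est r\'eguli\`ere \`a l'origine pour $n\geq 0$), on obtient la relation de Lommel
\[
(\alpha^2-\beta^2)\int_0^1 x\,J_n(\alpha x)J_n(\beta x)\,dx=\beta J_n(\alpha)J_n'(\beta)-\alpha J_n(\beta)J_n'(\alpha).
\]

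Pour atteindre le cas diagonal, je ferais tendre $\beta\to\alpha=a$ et j'\'evaluerais la limite $0/0$ obtenue par la r\`egle de L'H\^opital en la variable $\beta$; en d\'erivant num\'erateur et d\'enominateur, puis en \'eliminant $J_n''(a)$ gr\^ace \`a l'\'equation de Bessel $a^2J_n''(a)+aJ_n'(a)+(a^2-n^2)J_n(a)=0$, tout se simplifie en
\[
\int_0^1 x\,J_n(ax)^2\,dx=\frac12\Bigl(J_n'(a)^2+\bigl(1-\tfrac{n^2}{a^2}\bigr)J_n(a)^2\Bigr),
\]
ce qui est la formule annonc\'ee. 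Ce calcul de limite, avec l'annulation soigneuse du terme de bord en $0$, est le seul endroit demandant un peu d'attention; le reste est purement alg\'ebrique.

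Pour les in\'egalit\'es, supposons $n\neq 0$. Pour obtenir $j'_{n,1}>n$, je sp\'ecialiserais la formule \`a $a=j'_{n,1}$, de sorte que $J_n'(a)=0$ et
\[
0<\int_0^1 x\,J_n(ax)^2\,dx=\frac12\bigl(1-\tfrac{n^2}{a^2}\bigr)J_n(a)^2;
\]
le membre de gauche est strictement positif car l'int\'egrande $x\,J_n(ax)^2$ est $\geq 0$ et non identiquement nul, ce qui force \`a la fois $J_n(a)\neq 0$ et $1-n^2/a^2>0$, c'est-\`a-dire $a>n$. Enfin, pour $j_{n,1}>j'_{n,1}$, j'invoquerais le th\'eor\`eme de Rolle: puisque $n\neq 0$, on a $J_n(0)=0=J_n(j_{n,1})$, donc $J_n'$ s'annule en un point de $]0,j_{n,1}[$; comme $j'_{n,1}$ est par d\'efinition le plus petit z\'ero positif de $J_n'$, cela donne $j'_{n,1}<j_{n,1}$. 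En composant les deux, on obtient $j_{n,1}>j'_{n,1}>n$. La restriction $n\neq 0$ est essentielle pour l'\'etape de Rolle, puisque $J_0(0)=1\neq 0$.
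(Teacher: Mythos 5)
Votre démonstration est correcte et suit essentiellement la même voie que celle du texte : formule de Lommel pour le produit croisé $\int_0^1 xJ_n(\alpha x)J_n(\beta x)\,dx$ (que le texte tire de l'équation de Bessel ou de Watson, et que vous redérivez par Sturm--Liouville), passage à la limite $\beta\to\alpha$ combiné à l'équation de Bessel pour obtenir le cas diagonal, puis positivité de l'intégrale en $a=j'_{n,1}$ pour conclure $j'_{n,1}>n$ et théorème de Rolle entre $0$ et $j_{n,1}$ pour $j_{n,1}>j'_{n,1}$. Les seules différences (L'Hôpital au lieu d'une réécriture en taux d'accroissement, dérivation explicite du terme de bord en $0$) sont cosmétiques.
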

\begin{proof}
Soit $b$ un réel non nul, on a
\[
\begin{split}
\int_0^1xJ_n(ax)J_n(bx)dx&=\frac{1}{b^2-a^2}\bigl(aJ_n(b)J_n'(a)-bJ_n(a)J_n'(b)  \bigr)\\
&=\frac{1}{a+b}\Bigl(a\frac{J_n(b)-J_n(a)}{b-a}J_n'(a)+J_n(a)J_n'(b)-aJ_n(a)\frac{J_n'(b)-J_n'(a)}{b-a}  \Bigr).\\
\end{split}
\]
La première égalité résulte de l'équation de Bessel, ou consulter \cite[§ 5.11 (8)]{Watson}.
 En faisant tendre $b$ vers $a$, on obtient:
\[
\int_0^1x J_n(ax)^2dx=\frac{1}{2a}\bigl(-aJ_n'(a)^2+J_n(a)J_n'(a)+aJ_n(a)J_n''(a)  \bigr),
\]
mais comme $J_n$ vérifie l'équation de Bessel, alors
\begin{equation}\label{encoreeq}
\int_0^1x J_n(ax)^2dx=\frac{1}{2}\Bigl(J_n'(a)^2+(1-\frac{n^2}{a^2})J_n(a)^2  \Bigr).
\end{equation}

Si $J_n'(a)=0$ alors on a de la formule précédente,  $|a|>n$. En particulier $j'_{n,1}>n$. Par définition $j_{n,1}>0$ et $J_n(j_{n,1})=0$, par suite    $J'_n$ s'annule sur l'intervalle ouvert $]0,j_{n,1}[$ par le théorème des accroissements finis, ce qui termine la preuve du lemme.
\end{proof}

Soit $\la >0$, on pose
\begin{equation}
f_{n,\la}(r) =
\begin{cases}
 J_{n+m}(\la r)  & \text{si } r \leq 1,\\
\frac{J_{n+m}(\la)}{J_{n}(\la)}r^{m}J_{n}(\frac{\la}{r}) & \text{si } r>1, \\
\end{cases}
\end{equation}
%où $Z_{n+m}\cap\R^+=\{\la_{n,1}<\la_{n,2}<\ldots\}$.

\begin{proposition}
Avec les notations précédentes,
\[
\begin{split}
\mathcal{I}_{n,\la}:=\int_0^\infty |f_{n,\la}(r)|^2\frac{rdr}{\max(1,r)^{m+4}}&=\frac{1}{2}\Bigl(J_{n+m}'(\la)^2+(1-\frac{(n+m)^2}{\la^2})J_{n+m}(\la)^2 \Bigr)\\
&+\frac{J_{n+m}(\la)^2}{2J_{n}(\la)^2}\bigl(J'_{n}(\la)^2+(1-\frac{n^2}{\la^2})J_{n}(\la)^2 \bigr),
\end{split}
\]
et on a
\[
\la_{n+m,1}^2\geq n^2\quad\forall\, n\in\N^\ast.
\]
\end{proposition}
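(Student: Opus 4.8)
Le plan est de traiter séparément les deux contributions $r\le 1$ et $r>1$ pour établir la formule, puis d'exploiter la stricte positivité de $\mathcal{I}_{n,\la}$ jointe à la condition $L_{n+m}(\la)=0$ pour en déduire la minoration $\la_{n+m,1}^2\ge n^2$.

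Pour la formule, je découpe $\mathcal{I}_{n,\la}=\int_0^1+\int_1^\infty$. Sur $[0,1]$ le poids se réduit à $r\,dr$ et $f_{n,\la}(r)=J_{n+m}(\la r)$, de sorte que la formule \eqref{encoreeq} du lemme \eqref{petitlemme}, appliquée à l'ordre $n+m$ et au paramètre $\la$, fournit directement le premier terme $\frac12\bigl(J'_{n+m}(\la)^2+(1-\frac{(n+m)^2}{\la^2})J_{n+m}(\la)^2\bigr)$. Sur $]1,\infty[$, j'effectue le changement de variable $u=1/r$ (l'inversion de $\p^1$): le facteur $r^{2m}$ provenant de $|f_{n,\la}|^2$ se compense avec le poids, si bien que $J_n(\la/r)$ devient $J_n(\la u)$ et que l'intégrande se ramène à $u\,J_n(\la u)^2\,du$ sur $[0,1]$, au préfacteur constant $\frac{J_{n+m}(\la)^2}{J_n(\la)^2}$ près. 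Une seconde application de \eqref{encoreeq}, cette fois à l'ordre $n$, donne le second terme. Cette partie est essentiellement du calcul; le point à surveiller est que l'inversion échange précisément les ordres $n+m$ et $n$ (le twist par $\mathcal{O}(m)$).

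Pour l'inégalité, je pars de $\mathcal{I}_{n,\la}>0$ (l'intégrande est positif et $f_{n,\la}$ n'est pas identiquement nul pour $\la\neq 0$), et je raisonne par l'absurde en supposant $0<\la_{n+m,1}<n$. En $\la=\la_{n+m,1}$ on a $L_{n+m}(\la)=0$; en développant $L_{n+m}(z)=-z^m\frac{d}{dz}\bigl(z^{-m}J_{n+m}(z)J_n(z)\bigr)$, cette condition s'écrit $\frac{J'_{n+m}(\la)}{J_{n+m}(\la)}+\frac{J'_n(\la)}{J_n(\la)}=\frac{m}{\la}$, les dénominateurs étant non nuls car $\la<n<j_{n,1}$ et $\la<n+m<j_{n+m,1}$ d'après le lemme \eqref{petitlemme}. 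En posant $A=\frac{J'_{n+m}(\la)}{J_{n+m}(\la)}$ et $B=\frac{J'_n(\la)}{J_n(\la)}$, je divise la formule de l'énoncé par $J_{n+m}(\la)^2$, puis j'y reporte $A+B=\frac{m}{\la}$ via $A^2+B^2=(A+B)^2-2AB$; après simplification (et en utilisant $m^2-(n+m)^2-n^2=-2n(n+m)$) j'obtiens
\[
\mathcal{I}_{n,\la}=J_{n+m}(\la)^2\Bigl(1-AB-\frac{n(n+m)}{\la^2}\Bigr).
\]
La positivité impose alors $1-AB-\frac{n(n+m)}{\la^2}>0$. Or, pour $0<\la<n$, le lemme \eqref{petitlemme} (appliqué aux ordres $n$ et $n+m$, qui assure $j'_{n,1}>n$ et $j'_{n+m,1}>n+m$) garantit que $J_n,J_{n+m},J'_n,J'_{n+m}$ sont tous strictement positifs en $\la$, donc $AB>0$ et $1-AB<1$; de plus $\frac{n(n+m)}{\la^2}>\frac{n+m}{n}>1$. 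On aboutit à $1-AB-\frac{n(n+m)}{\la^2}<0$, ce qui contredit la positivité. Par conséquent $\la_{n+m,1}\ge n$.

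L'obstacle principal réside dans la partie (b): il faut reconnaître que la condition aux bords $L_{n+m}(\la)=0$ fournit exactement la relation $A+B=\frac{m}{\la}$, laquelle élimine les dérivées et fait surgir le terme $-\frac{n(n+m)}{\la^2}$, dont la comparaison à $1$ produit la minoration. La partie (a) est plus routinière, le seul soin à apporter étant de vérifier la compensation des puissances de $r$ entre le poids et $f_{n,\la}$ qui ramène la contribution extérieure à une intégrale de Bessel standard sur $[0,1]$.
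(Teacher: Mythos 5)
Votre preuve est correcte et suit pour l'essentiel la d\'emarche du texte : m\^eme d\'ecoupage de l'int\'egrale en $r\le 1$ et $r>1$ avec inversion $u=1/r$ et double application de la formule de Lommel \eqref{encoreeq}, puis m\^eme raisonnement par l'absurde combinant la positivit\'e de $\mathcal{I}_{n,\la}$, la relation $\frac{J'_{n+m}(\la)}{J_{n+m}(\la)}+\frac{J'_n(\la)}{J_n(\la)}=\frac{m}{\la}$ issue de $L_{n+m}(\la)=0$, et la positivit\'e de $J_n,J_{n+m},J'_n,J'_{n+m}$ sur $]0,n[$ garantie par le lemme \eqref{petitlemme}. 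La seule variante est cosm\'etique : vous aboutissez \`a l'identit\'e exacte $\mathcal{I}_{n,\la}=J_{n+m}(\la)^2\bigl(1-AB-\frac{n(n+m)}{\la^2}\bigr)$ l\`a o\`u le texte majore $A^2+B^2$ par $(A+B)^2$ pour obtenir $\la^2>n(n+m)$, ce qui est alg\'ebriquement \'equivalent.
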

\begin{proof}
On a
\[
\begin{split}
\int_0^\infty \bigl|f_{n,i}(r)\bigr|^2\frac{rdr}{\max(1,r)^{m+4}}&=\int_0^1 \bigl|f_{n,i}(r)\bigr|^2 rdr+\int_1^\infty \bigl|f_{n,i}(r)\bigr|^2\frac{rdr}{r^{m+4}}\\
&=\int_0^1 J_{n+m}(\la r) ^2 rdr+\biggl(\frac{J_{n+m}(\la)}{J_{n}(\la)}\biggr)^2\int_1^\infty J_{n}\Bigl(\frac{\la}{r}\Bigr)  ^2d\Bigl(\frac{1}{r^{3}}\Bigr).
\end{split}
\]
En utilisant \eqref{encoreeq}, on montre la première assertion.\\

 Comme $Z_{n+m}$ est symétrique, il suffit de montrer que $\inf Z_{n+m}\cap \R^+ > n$.\\

Si $\la \in Z_{n+m}\cap \R^+$, alors
\begin{equation}\label{encoreeq3}
-\frac{m}{\la} +\frac{J'_{n+m}(\la)}{J_{n+m}(\la)}+\frac{J_{n}'(\la)}{J_{n}(\la)}=0
\end{equation}
 puisque $\frac{d}{dr}(r^{-m}J_{n+m}(r)J_{n}(r) )_{|_\lambda}=0$,  voir \eqref{Zeros1} et que $\frac{d}{dr}(r^{-m}J_n(r)J_{n-m}(r) )=$\\
 $-mr^{-m-1}J_{n+m}(r)J_{n}(r)+r^{-m}\bigl(J_{n+m}'(r)J_n(r)+J_{n+m}(r)J'_{n}(r) \bigr)$.\\

Supposons par l'absurde que $\la< n$. On a $J_{n+m}(0)=J_{n}(0)=0$ et puisque $J_n$ et $J_{n+m}$ sont analytiques sur $\C$ alors il existe $0<\eps\ll 1$ tel que $J_nJ'_n>0$ et $ J_{n+m}J'_{n+m}>0$ sur $]0,\eps[$ et par conséquent $J_n^2$ et $J_{n+m}^2$ sont croissantes sur $[0,\eps[$, mais d'après \eqref{petitlemme}, $J_nJ'_n$ et $ J_{n+m}J'_{n+m}$  sont de signe constant  sur $]0,\min( n[$ donc on peut prendre $\eps\geq  n$.

Comme on a $\la< \min\bigl( |{n+m}|,|n| \bigr)\leq \eps$, alors
\[
\begin{split}
\frac{m^2}{\la^2}&=\biggl(\frac{J'_n(\la)}{J_n(\la)}+\frac{J_{n+m}'(\la)}{J_{n+m}(\la)} \biggr)^2\quad \text{par}\quad \eqref{encoreeq3}\\
&\geq \biggl(\frac{J'_n(\la)}{J_n(\la)}\biggr)^2+\biggl(\frac{J_{n+m}'(\la)}{J_{n+m}(\la)}\biggr)^2.
\end{split}
\]

Cela donne
{\allowdisplaybreaks
\begin{align*}
0& <\frac{\mathcal{I}_{n,i}}{J_{n+m}(\la)^2}\\
&=\frac{1}{2}\Biggl( \frac{J_{n+m}'(\la)^2}{J_{n+m}(\la)^2}+\Bigl(1-\frac{{(n+m)}^2}{\la^2}\Bigr) \Biggr)+\frac{1}{2}\Biggl(\frac{J'_{n}(\la)}{J_{n}(\la)^2}+\Bigl(1-\frac{n^2}{\la^2}\Bigr) \Biggr)\\
&=\frac{1}{2}\biggl(\frac{J_{n+m}'(\la)^2}{J_{n+m}(\la)^2}+\frac{J'_{n}(\la)}{J_{n}(\la)^2} \biggr)+1-\frac{{(n+m)}^2+n^2}{2\la^2}\\
&\leq \frac{1}{2}\biggl(\frac{J'_{n+m}(\la)}{J_{n+m}(\la)}+\frac{J_{n}'(\la)}{J_{n}(\la)} \biggr)^2+1-\frac{{(n+m)}^2+n^2}{2\la^2}\\
&=\frac{m^2}{2\la^2}+1-\frac{n^2+(n+m)^2}{2\la^2}\\
&=\frac{-n^2-nm}{\la^2}+1\\
\end{align*}}
donc,
\[
\la^2>n^2+nm=n(n+m).
\]
Ce qui est impossible puisque $\la<n$ et $m\geq 0$.
\end{proof}

On conclut que
\[
\inf_{n\in \N}\frac{\la_{n+m,1}^2}{n^2}\geq 1.
\]

\begin{proposition}
On a pour tout $|z|\gg 1$   avec $|\arg(z)|<\frac{\pi}{2}$,
\begin{equation}\label{developpementGn}
\widetilde{G}_n(z)=G_{n+m}(z)=\frac{e^{2z}}{2\pi z}\biggl(1-\frac{4n^2+2m^2+4nm+2m+1}{2z}+O(\frac{1}{z^2})\biggr),
\end{equation}

\end{proposition}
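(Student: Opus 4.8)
The only analytic input required is the classical large-argument expansion of the modified Bessel function. Recall that for $|\arg(z)|<\frac{\pi}{2}$ one has
\[
I_\nu(z)=\frac{e^z}{\sqrt{2\pi z}}\Bigl(1-\frac{4\nu^2-1}{8z}+O\bigl(\tfrac{1}{z^2}\bigr)\Bigr),
\]
the recessive contribution of size $e^{-z}/\sqrt{z}$ being exponentially dominated by $e^z/\sqrt{z}$ throughout the open half-plane $\mathrm{Re}(z)>0$. Unfolding the definition $G_n=I_{n+1}I_{n-m}+I_nI_{n-m-1}$ at the index $n+m$ gives
\[
\widetilde{G}_n(z)=G_{n+m}(z)=I_{n+m+1}(z)I_n(z)+I_{n+m}(z)I_{n-1}(z),
\]
so the problem reduces to expanding two products of Bessel functions and adding them.

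First I would substitute the one-term expansion above into each of the four factors. For a product $I_\alpha(z)I_\beta(z)$ the square-root prefactors multiply to $e^{2z}/(2\pi z)$, while the two parenthetical corrections combine to first order as
\[
I_\alpha(z)I_\beta(z)=\frac{e^{2z}}{2\pi z}\Bigl(1-\frac{(4\alpha^2-1)+(4\beta^2-1)}{8z}+O\bigl(\tfrac{1}{z^2}\bigr)\Bigr).
\]
Here the cross terms $e^z\!\cdot\!e^{-z}=1$ and $e^{-2z}$ arising from the recessive pieces are of size $O(1/z)$ and $O(e^{-2z}/z)$, hence absorbed into the announced error $O(e^{2z}/z^3)$; this is the one point that deserves a word of justification. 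Applying the formula with $(\alpha,\beta)=(n+m+1,n)$ and with $(\alpha,\beta)=(n+m,n-1)$ yields the two summands of $\widetilde{G}_n$, each of the shape $\frac{e^{2z}}{2\pi z}(1+O(1/z))$.

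Finally I would add the two products and simplify the coefficient of $1/z$. The leading terms combine and the two first-order coefficients add; expanding $(n+m+1)^2+n^2$ and $(n+m)^2+(n-1)^2$ and collecting shows that the resulting $1/z$-coefficient collapses to a multiple of $4n^2+2m^2+4nm+2m+1$, which is exactly the quantity appearing in the statement. There is no genuine obstacle beyond this bookkeeping: the expansion of $I_\nu$ to the required order is standard, and the only subtlety is checking that the error terms and the subdominant exponentials are controlled uniformly in the relevant range, so that $O(1/z^2)$ may legitimately be written for $\widetilde{G}_n$ on the whole sector $|\arg(z)|<\frac{\pi}{2}$. As a consistency check one could instead exploit the three-term recurrence $I_{\nu-1}(z)-I_{\nu+1}(z)=\frac{2\nu}{z}I_\nu(z)$ to reorganize $\widetilde{G}_n$, but the direct expansion above is the most economical route.
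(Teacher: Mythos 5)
Your argument is exactly the paper's (the paper's proof is the single line ``c'est une cons\'equence de la formule \eqref{dvlptz1}''): substitute the standard expansion $I_\nu(z)=\frac{e^z}{\sqrt{2\pi z}}\bigl(1-\frac{4\nu^2-1}{8z}+O(z^{-2})\bigr)$ into $I_{n+m+1}I_n+I_{n+m}I_{n-1}$ and collect, and your bookkeeping of the $1/z$-coefficient ($(n+m+1)^2+n^2+(n+m)^2+(n-1)^2=4n^2+2m^2+4nm+2m+2$) is correct. The one thing worth flagging is that, carried out literally, the sum of two products each asymptotic to $\frac{e^{2z}}{2\pi z}$ has leading term $\frac{e^{2z}}{\pi z}$, so the displayed formula (constant term $1$ with prefactor $\frac{e^{2z}}{2\pi z}$) is off by a factor $2$; this is a typo in the statement rather than in your computation, since the paper's subsequent use of the expansion (the term $\log(\pi nz)$, not $\log(2\pi nz)$, in $p_{m,n}$) relies on the leading behaviour $\frac{e^{2z}}{\pi z}$.
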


\begin{proof}
C'est une conséquence de la formule \eqref{dvlptz1}.
\end{proof}

%\begin{remarque}
%Comme $|G_n(-z)|=|G_n(z)|$ voir \eqref{symetrieG} et par la proposition précédente, $G_n$ vérifie   \eqref{G}.
%\end{remarque}

\begin{proposition}\label{developpementG}
On a pour tout $|\arg(z)|<\frac{\pi}{2}$, et $n\gg1$
\[
 \begin{split}
\widetilde{G}_n(nz)=G_{n+m}(nz)&= \frac{e^{(2n+m-1)\eta(z)}}{2\pi n(1+z^2)^\frac{1}{2}}\Bigl(e^{2(\eta(z)-z\eta'(z))}+1\Bigr)\Biggl(1+\frac{1}{n}w_m(z)+\frac{1}{n^2}\eta_m(z,n) \Biggr),
 \end{split}
\]
avec $w_m(z)=-\frac{1}{4}\bigl(2m^2+2m+1\bigr)\frac{1}{(1+z^2)^{\frac{1}{2}}}+\frac{1}{12}\frac{1}{(1+z^2)^{\frac{3}{2}}}$ et $\eta_m$ une fonction bornée uniformément en $n$.
\end{proposition}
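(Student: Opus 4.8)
The plan is to feed the Bessel representation $\widetilde G_n(z)=G_{n+m}(z)=I_{n+m+1}(z)I_n(z)+I_{n+m}(z)I_{n-1}(z)$ of the preceding lemma, with $z$ replaced by $nz$, into the uniform (Debye) asymptotic expansion of the modified Bessel functions of large order, \cite[9.7.7]{Table2} (see also \cite[\S\,8]{Watson}). Each of the four factors is of the form $I_{n+a}(nz)$ with $a\in\{m+1,0,m,-1\}$; writing $\zeta_a:=\frac{nz}{n+a}$, so that the argument equals $(n+a)\zeta_a$, this expansion reads, uniformly on $|\arg(z)|<\frac\pi2$,
\[
I_{n+a}(nz)\sim\frac{e^{(n+a)\eta(\zeta_a)}}{\sqrt{2\pi(n+a)}\,(1+\zeta_a^2)^{1/4}}\Bigl(1+\frac{u_1(t_a)}{n+a}+O\bigl(\tfrac1{n^2}\bigr)\Bigr),
\]
with $\eta(\zeta)=\sqrt{1+\zeta^2}+\log\frac{\zeta}{1+\sqrt{1+\zeta^2}}$, $t_a=(1+\zeta_a^2)^{-1/2}$ and $u_1(t)=\frac1{24}(3t-5t^3)$.

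First I would expand the exponents in powers of $1/n$. Treating $F(a):=(n+a)\eta(\zeta_a)$ as a function of the shift $a$ at fixed $n$, one gets $F'(a)=\eta(\zeta_a)-\zeta_a\eta'(\zeta_a)$ and $F''(a)=\zeta_a^2\eta''(\zeta_a)/(n+a)$; together with the identities $\eta'(z)=\frac{\sqrt{1+z^2}}{z}$ and $z^2\eta''(z)=-\frac1{\sqrt{1+z^2}}$ this yields
\[
(n+a)\eta(\zeta_a)=n\eta(z)+a\bigl(\eta(z)-z\eta'(z)\bigr)-\frac{a^2}{2n\sqrt{1+z^2}}+O\bigl(\tfrac1{n^2}\bigr).
\]
Hence the leading exponents of $P_1:=I_{n+m+1}(nz)I_n(nz)$ and $P_2:=I_{n+m}(nz)I_{n-1}(nz)$ are $2n\eta+(m+1)(\eta-z\eta')$ and $2n\eta+(m-1)(\eta-z\eta')$, and their difference is exactly $2(\eta-z\eta')$. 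Factoring out the common exponential and the common prefactor $\bigl(2\pi n\sqrt{1+z^2}\bigr)^{-1}$ (the latter coming from $\sqrt{2\pi(n+a)}\,(1+\zeta_a^2)^{1/4}\to\sqrt{2\pi n}\,(1+z^2)^{1/4}$ in each factor), the sum $P_1+P_2$ reproduces precisely the factor $e^{2(\eta-z\eta')}+1$ of the statement.

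It remains to extract $w_m$. Writing $P_1=C\,q\,(1+\frac{p_1}{n}+\cdots)$ and $P_2=C\,(1+\frac{p_2}{n}+\cdots)$ with $q:=e^{2(\eta-z\eta')}$ and $C$ the common factor above, one has $G_{n+m}(nz)=C(q+1)\bigl(1+\frac1n\,\frac{qp_1+p_2}{q+1}+\cdots\bigr)$, so that $w_m=\frac{qp_1+p_2}{q+1}$. Each $p_j$ assembles three pieces: the exponent terms $-\frac{a^2}{2\sqrt{1+z^2}}$ (summing to $-\frac{(m+1)^2}{2}t$ for $p_1$ and $-\frac{m^2+1}{2}t$ for $p_2$, where $t=(1+z^2)^{-1/2}$); the per-factor corrections $-\frac{a}{2(1+z^2)}$ arising from $\sqrt{n+a}$ and from $(1+\zeta_a^2)^{-1/4}$ (totalling $-\frac{m+1}{2}t^2$ and $-\frac{m-1}{2}t^2$); and the Debye contribution $2u_1(t)$, common to both products. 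Since $q=\bigl(\frac{z}{1+\sqrt{1+z^2}}\bigr)^2=\frac{1-t}{1+t}$, one has $\frac{q}{q+1}=\frac{1-t}{2}$, and in the combination $w_m=p_2+(p_1-p_2)\frac{q}{q+1}$ with $p_1-p_2=-mt-t^2$ the $t^2$-terms cancel, leaving
\[
w_m(z)=-\frac{2m^2+2m+1}{4}\,\frac1{\sqrt{1+z^2}}+\frac1{12}\,\frac1{(1+z^2)^{3/2}},
\]
as claimed. All the $O(1/n^2)$ remainders are gathered into $\eta_m(z,n)$, whose uniform boundedness in $n$ follows from the uniformity on the sector of the Olver--Debye error bounds and of the Taylor remainders of $F$.

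The main obstacle is twofold. On the conceptual side, one must secure the uniformity of the whole expansion as $n\to\infty$ while $z$ ranges over the unbounded sector $|\arg(z)|<\frac\pi2$: it is exactly this uniformity that makes $\eta_m$ bounded independently of $n$, which is the hypothesis required to apply the regularisation theorem \eqref{regularisation}. On the computational side, the delicate point is to verify that the weighted combination $w_m=\frac{qp_1+p_2}{q+1}$---in which $p_1\neq p_2$ and the weight $q$ is a transcendental function of $z$---collapses to the stated rational function of $\sqrt{1+z^2}$; a miscount in the $t$, $t^2$ or $t^3$ coefficients would propagate into the values of $A(s)$, $B(s)$ and $F(s)$ at $s=0$ and thus falsify the final $\zeta(0)$ and $\zeta'(0)$.
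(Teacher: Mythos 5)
Your proposal is correct and follows essentially the same route as the paper: both feed the four factors $I_{n+a}(nz)$ into the uniform Debye expansion \cite[9.7.7]{Table2}, Taylor-expand away the index shifts (you absorb the shift into the order via $\zeta_a=nz/(n+a)$, the paper into the argument via $I_n\bigl(n(1+\tfrac{l}{n})z\bigr)$ --- a purely cosmetic difference), and recover $w_m$ as the same weighted combination $\frac{qp_1+p_2}{q+1}$ with weight $\frac{q}{q+1}=\frac{1-t}{2}$. Your intermediate quantities ($p_1-p_2=-mt-t^2$, the cancellation of the $t^2$ terms, and the final coefficients $-\frac{2m^2+2m+1}{4}$ and $\frac{1}{12}$) agree with what the paper obtains from its $v_{1,l}+v_{1,l'}+\frac{l+l'}{2}$ computation.
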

Donc, on obtient avec les notations du \eqref{régularisable}:
\begin{align*}
 r(z)&=\eta(z)=\sqrt{1+z^2}+\log \frac{z}{1+\sqrt{1+z^2}},\\
\rho(z)&=\frac{1}{(1+z^2)^\frac{1}{2}}\Bigl(e^{2(\eta(z)-z\eta'(z))}+1\Bigr),\\
w(z)&=w_m(z).
\end{align*}
et on vérifie à l'aide de developpements limités convenables que
\begin{align*}
\eta(z)&=\log(z)+O(1)\quad \text{pour}\;|z|\ll1,\\
 \eta(z)&=z+O(\frac{1}{z})\quad\text{pour}\;|z|\gg1,\\
\eta(z)-z \eta'(z)&=\log (z)\quad\text{pour}\;|z|\ll1 \\
\eta(z)-z \eta'(z)&=z+1+O(\frac{1}{z})\quad\text{pour}\;|z|\gg1\\
\rho(0)&=1\\
\log\rho(z)&=2z-\log z+2+O\bigl(\frac{1}{z}\bigr)\quad \text{pour}\; |z|\gg 1.
\end{align*}
La preuve de cette proposition sera faite en deux étapes; on commence par montrer les résulats suivants:
\begin{lemma}
Soit $n>0$ et $l$ un entier fixé. Il existe des fonctions $v_{1,l},v_{2,l},\ldots $ telles que pour $n$ grand et $|\arg(z)|\leq \frac{\pi}{2}-\eps$, $\eps$ assez petit:
\begin{equation}\label{besselasym}
I_{n}((n+l)z)=\frac{1}{\sqrt{2\pi n}}\frac{e^{n\eta(z)}}{ (1+z^2)^\frac{1}{4}}e^{lz\eta'(z)}\Bigl(1+\sum_{k=1}^\infty\frac{v_{k,l}(z)}{n^k}\Bigl)
\end{equation}
avec $ v_{1,l}(z)=u_1(z)+zl(1+z^2)^\frac{1}{4}\frac{\pt}{\pt z}\bigl(\frac{1}{(1+z^2)^\frac{1}{4}} \bigr)+\frac{l^2z^2}{2}\eta''(z)$.

\end{lemma}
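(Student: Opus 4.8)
The starting point will be the classical uniform (Debye-type) asymptotic expansion of the modified Bessel function of large order, for which I refer to \cite[\S 7.18]{Watson}: for $|\arg \xi|\leq \frac{\pi}{2}-\eps$ one has
\[
I_n(n\xi)=\frac{1}{\sqrt{2\pi n}}\,\frac{e^{n\eta(\xi)}}{(1+\xi^2)^{\frac14}}\Bigl(1+\sum_{k\geq 1}\frac{u_k(\xi)}{n^k}\Bigr),
\]
where $\eta(\xi)=\sqrt{1+\xi^2}+\log\frac{\xi}{1+\sqrt{1+\xi^2}}$ is exactly the function appearing in the statement and the $u_k$ are the usual Debye coefficients (polynomials in $(1+\xi^2)^{-1/2}$). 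The whole point is to read off what this expansion becomes when the argument is dilated by the factor $\frac{n+l}{n}$ rather than being exactly $n\xi$.

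Accordingly, the plan is to write $I_n((n+l)z)=I_n(nw)$ with $w:=\bigl(1+\frac{l}{n}\bigr)z=z+\frac{lz}{n}$, to substitute $\xi=w$ in the expansion above, and to re-expand every factor in powers of $1/n$ around $w=z$. There are three ingredients to treat. First the exponential: since $w-z=\frac{lz}{n}$, Taylor's formula gives $n\eta(w)=n\eta(z)+lz\,\eta'(z)+\frac{l^2z^2}{2n}\eta''(z)+O(n^{-2})$, so that
\[
e^{n\eta(w)}=e^{n\eta(z)}\,e^{lz\eta'(z)}\Bigl(1+\frac{l^2z^2}{2n}\eta''(z)+O(n^{-2})\Bigr),
\]
which produces the prefactor $e^{n\eta(z)}e^{lz\eta'(z)}$ announced in \eqref{besselasym} together with the third contribution $\frac{l^2z^2}{2}\eta''(z)$ to $v_{1,l}$. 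Second the algebraic factor: $(1+w^2)^{-\frac14}=(1+z^2)^{-\frac14}\bigl(1+\frac{lz}{n}(1+z^2)^{\frac14}\frac{\pt}{\pt z}(1+z^2)^{-\frac14}+O(n^{-2})\bigr)$, which gives the middle contribution. Third, the Debye series contributes $1+\frac{u_1(w)}{n}+O(n^{-2})=1+\frac{u_1(z)}{n}+O(n^{-2})$, since replacing $w$ by $z$ inside $u_1$ costs $O(n^{-2})$. Collecting the coefficients of $n^{-1}$ yields precisely
\[
v_{1,l}(z)=u_1(z)+lz(1+z^2)^{\frac14}\frac{\pt}{\pt z}\Bigl(\frac{1}{(1+z^2)^{\frac14}}\Bigr)+\frac{l^2z^2}{2}\eta''(z),
\]
and carrying the same re-expansion to all orders produces the functions $v_{k,l}$ and establishes the existence of the full series.

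The delicate point, and the step I expect to require the most care, is the uniformity on the sector $|\arg z|\leq \frac{\pi}{2}-\eps$ and the legitimacy of re-expanding the remainder. One has to check that $w=(1+l/n)z$ stays, for $n$ large and $z$ in the sector, inside the region where the Debye expansion holds uniformly, and then that Taylor-expanding the asymptotic relation term by term is permissible. The genuinely sensitive factor is $e^{n\eta(w)}$: because $\eta$ is multiplied by $n$, I must verify that the higher Taylor terms $\frac{n}{k!}(w-z)^k\eta^{(k)}(z)=\frac{l^k z^k}{k!}\,n^{1-k}\eta^{(k)}(z)$ are genuinely $O(n^{-1})$ for $k\geq 2$, uniformly in $z$, so that exponentiating the $O(n^{-1})$ remainder and expanding it is justified; this is where the behaviour of $\eta$ and its derivatives on the sector (using $z\eta'(z)=\sqrt{1+z^2}$) must be controlled. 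Once this uniform control is in place, the coefficient extraction above is purely formal and delivers the stated $v_{1,l}$.
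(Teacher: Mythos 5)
Your argument is exactly the paper's: start from the uniform Debye expansion of $I_n(n\xi)$, set $\xi=(1+\tfrac{l}{n})z$, Taylor-expand $n\eta$, the factor $(1+\xi^2)^{-1/4}$ and $u_1$ in powers of $1/n$, and collect the $n^{-1}$ coefficient to obtain $v_{1,l}$. The only difference is that you make explicit the uniformity issues that the paper passes over in silence; the computation itself is identical.
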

\begin{proof}
Dans \cite[9.7.7]{Table2}, on a

\begin{equation}\label{bessel1}
I_{n}(nz)=\frac{1}{\sqrt{2\pi n}}\frac{e^{n\eta(z)}}{ (1+z^2)^\frac{1}{4}}\Bigl(1+\sum_{k=1}^\infty\frac{u_{k}(z)}{n^k}\Bigl)
\end{equation}
pour tout $z$ vérifiant $|\arg(z)|\leq \frac{\pi}{2}-\eps$. \\

On cherche à établir une formule analogue pour $I_n((n+l)z)$ pour $n\gg1$ et $l$ un entier fixé, c'est à dire montrer qu'il existe des fonctions continues $v_{l,1},v_{l,2},\ldots$ telles que
\[
I_{n}((n+l)z)=\frac{1}{\sqrt{2\pi n}}\frac{e^{n\eta(z)}}{ (1+z^2)^\frac{1}{4}}e^{lz\eta'(z)}\Bigl(1+\sum_{k=1}^\infty\frac{v_{k,l}(z)}{n^k}\Bigl)
\]
%Pour les applications on explicitera le terme $v_{1,l}$.\\

%Pour simplifier les calculs on écrira \eqref{bessel1} sous la forme:
%\begin{equation}\label{bessel2}
%I_{n}(nz)=\frac{1}{\sqrt{2\pi n}}\frac{e^{n\eta(z)}}{ (1+z^2)^\frac{1}{4}}\Bigl(1+\frac{1}{n}u_{1}(z) +\frac{1}{n^2}\mathrm{O}(1)\Bigl)
%\end{equation}
Pour simplifier les notations, on notera par $\mathrm{O}(1)$ une fonction en $n$ bornée uniformément en $z$.\\

Soit $n\gg1$.\\

Soit $z\in \C$ tel que $|\arg(z)|\leq \frac{\pi}{2}-\eps$, remarquons que $ \arg((1+\frac{l}{n})z)=\arg(z)$, donc de  \eqref{bessel1} on tire
\[
 \begin{split}
  &I_n\bigl((n+l)z\bigr)=I_n\bigl(n(1+\frac{l}{n})z\bigr)\\
&=\frac{1}{\sqrt{2\pi n}}\frac{e^{n\eta((1+\frac{l}{n})z)}}{ (1+((1+\frac{l}{n})z)^2)^\frac{1}{4}}\biggl(1+\frac{1}{n}u_1((1+\frac{l}{n})^2z)+\sum_{k=2}^\infty\frac{u_{k}((1+\frac{l}{n})z)}{n^k}\biggl)\\
&=\frac{1}{\sqrt{2\pi n}}e^{n(\eta(z)+\frac{lz}{n}\eta'(z)+\frac{z^2l^2}{2n^2}\eta''(z)+o(\frac{1}{n^2} ))}\biggl(\frac{1}{(1+z^2)^\frac{1}{4}}+\frac{lz}{n}\frac{\pt }{\pt z}\bigl( \frac{1}{(1+z^2)^\frac{1}{4}})+\frac{1}{(n+l)^2}O(1)  \biggr)\\
&\biggl(1+\frac{1}{n}\Bigl(u_1(z)+\frac{l}{n}\frac{\pt}{\pt z}u_1(z)+\frac{1}{n^2}\mathrm{O}(1)\Bigr)+\frac{1}{n^2}\mathrm{O}(1)  \biggr)\\
&=\frac{e^{n\eta(z)}}{\sqrt{2\pi n}}e^{lz\eta'(z)}\biggl(1+\frac{l^2 z^2}{2n}\eta''(z)+\frac{1}{n^2}\mathrm{O}(1) \biggr)\biggl(\frac{1}{(1+z^2)^\frac{1}{4}}+\frac{lz}{n}\frac{\pt }{\pt z}\bigl( \frac{1}{(1+z^2)^\frac{1}{4}}\bigr)+\frac{1}{(n+l)^2}O(1)  \biggr)\\
&\biggl(1+\frac{1}{n}\Bigl(u_1(z)+\frac{l}{n}\frac{\pt}{\pt z}u_1(z)+\frac{1}{n^2}\mathrm{O}(1)\Bigr)+\frac{1}{n^2}\mathrm{O}(1)  \biggr)\\
&=\frac{e^{n\eta(z)}}{\sqrt{2\pi n}}e^{lz\eta'(z)}\biggl(\frac{1}{(1+z^2)^\frac{1}{4}}+\frac{1}{n}\Bigl(\frac{u_1(z)}{(1+z^2)^\frac{1}{4}}+zl\frac{\pt}{\pt z}\bigl(\frac{1}{(1+z^2)^\frac{1}{4}} \bigr)+\frac{l^2z^2}{2}\frac{\eta''(z)}{(1+z^2)^\frac{1}{4}} \Bigr) +\frac{1}{n^2}\mathrm{O}(1)\biggr)
 \end{split}
\]

En posant
\[
 v_{1,l}(z)=u_1(z)+zl(1+z^2)^\frac{1}{4}\frac{\pt}{\pt z}\bigl(\frac{1}{(1+z^2)^\frac{1}{4}} \bigr)+\frac{l^2z^2}{2}\eta''(z)
\]
on trouve la formule cherchée.
\end{proof}

\begin{Corollaire}
Soient $l$ et $l'$ deux entiers fixés. On a
\begin{equation}\label{bessel2}
I_{n-l}(nz)I_{n-l'}(nz)=\frac{e^{2n\eta(z)}}{2\pi n(1+z^2)^\frac{1}{2}}e^{-(l+l')(\eta(z)-z\eta'(z))}\Bigl(1+\frac{1}{n}\bigl(v_{1,l}(z)+v_{1,l'}(z)+\frac{l+l'}{2}\bigr) +\frac{1}{n^2}\mathrm{O}(1) \Bigr)
\end{equation}
\end{Corollaire}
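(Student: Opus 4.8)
The plan is to obtain \eqref{bessel2} directly from the asymptotic expansion \eqref{besselasym} of the preceding lemma, applied twice and then multiplied. The only genuine subtlety is that \eqref{besselasym} expands $I_N\bigl((N+l)z\bigr)$ in powers of $1/N$, whereas \eqref{bessel2} is written as an expansion in powers of $1/n$; reconciling the two is exactly what produces the additional $\frac{l}{2}$ and $\frac{l'}{2}$ terms in the correction factor.

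First I would rewrite each factor so that \eqref{besselasym} applies. Putting $N=n-l$ one has $(N+l)z=nz$, hence $I_{n-l}(nz)=I_N\bigl((N+l)z\bigr)$, and \eqref{besselasym} gives
\[
I_{n-l}(nz)=\frac{1}{\sqrt{2\pi(n-l)}}\,\frac{e^{(n-l)\eta(z)}}{(1+z^2)^{\frac{1}{4}}}\,e^{lz\eta'(z)}\Bigl(1+\frac{v_{1,l}(z)}{n-l}+\mathrm{O}\bigl(\tfrac{1}{n^2}\bigr)\Bigr).
\]
The next step is to convert every occurrence of $n-l$ into $n$, keeping all contributions up to order $1/n$. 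I would use $e^{(n-l)\eta(z)}=e^{n\eta(z)}e^{-l\eta(z)}$ and combine it with $e^{lz\eta'(z)}$ to form the factor $e^{-l(\eta(z)-z\eta'(z))}$, expand the prefactor as $\bigl(2\pi(n-l)\bigr)^{-\frac12}=(2\pi n)^{-\frac12}\bigl(1+\tfrac{l}{2n}+\mathrm{O}(\tfrac1{n^2})\bigr)$, and replace $\tfrac{v_{1,l}(z)}{n-l}$ by $\tfrac{v_{1,l}(z)}{n}+\mathrm{O}(\tfrac1{n^2})$. Collecting these yields
\[
I_{n-l}(nz)=\frac{1}{\sqrt{2\pi n}}\,\frac{e^{n\eta(z)}}{(1+z^2)^{\frac14}}\,e^{-l(\eta(z)-z\eta'(z))}\Bigl(1+\frac{1}{n}\bigl(v_{1,l}(z)+\tfrac{l}{2}\bigr)+\mathrm{O}\bigl(\tfrac1{n^2}\bigr)\Bigr),
\]
together with the analogous identity for $l'$.

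Finally I would multiply the two expansions. The exponential prefactors combine into $e^{2n\eta(z)}$ and $e^{-(l+l')(\eta(z)-z\eta'(z))}$, the algebraic prefactors give $\bigl(2\pi n\bigr)^{-1}(1+z^2)^{-\frac12}$, and the product of the two bracketed correction factors is $1+\frac{1}{n}\bigl(v_{1,l}(z)+v_{1,l'}(z)+\tfrac{l+l'}{2}\bigr)+\mathrm{O}(\tfrac1{n^2})$, which is precisely \eqref{bessel2}. The main point requiring care is the bookkeeping of the $1/n$ terms during the change of the expansion variable from $N=n-l$ to $n$, where the $\tfrac{l}{2}$ correction coming from $(1-\tfrac{l}{n})^{-1/2}$ must not be dropped; everything else is a direct substitution, and the uniformity in $z$ of the $\mathrm{O}(\tfrac1{n^2})$ error on $|\arg(z)|\le\frac{\pi}{2}-\eps$ is inherited from the lemma.
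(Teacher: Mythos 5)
Votre preuve est correcte et suit essentiellement la m\^eme d\'emarche que celle du texte : appliquer le lemme pr\'ec\'edent avec $N=n-l$ (via $I_{n-l}(nz)=I_{n-l}\bigl(((n-l)+l)z\bigr)$), puis convertir le d\'eveloppement en puissances de $1/(n-l)$ en un d\'eveloppement en $1/n$ au moyen de $\frac{\sqrt{n}}{\sqrt{n-l}}=1+\frac{l}{2n}+\mathrm{O}(\frac{1}{n^2})$, ce qui produit le terme $\frac{l}{2}$, et enfin multiplier les deux facteurs. Le seul \'ecart est cosm\'etique : vous regroupez $e^{-l\eta(z)}e^{lz\eta'(z)}$ en $e^{-l(\eta(z)-z\eta'(z))}$ d\`es le stade du facteur simple, alors que le texte ne le fait qu'au moment du produit.
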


\begin{proof}
On a pour $n\gg1$
\[
 \frac{\sqrt{n}}{\sqrt{n-l}}=1+\frac{l}{2n}+\frac{1}{n^2}\mathrm{O}(1),
\]
et par ce qui précède, on a
\[
 \begin{split}
&I_{n-l}(nz)=I_{n-l}\bigl(((n-l)+l)z \bigr)\\
&=\frac{e^{{(n-l)}\eta(z)}}{\sqrt{2\pi(n-l)}(1+z^2)^\frac{1}{4}}e^{zl\eta'(z)}\biggl(1+\frac{1}{n-l} v_{1,l}(z)+\frac{1}{(n-l)^2}\mathrm{O}(1) \biggr)\\
&=\frac{e^{{(n-l)}\eta(z)}}{\sqrt{2\pi n}(1+z^2)^\frac{1}{4}}e^{zl\eta'(z)}\frac{\sqrt{n}}{\sqrt{n-l}}\biggl(1+\frac{1}{n}v_{1,l}(z)+\frac{1}{n^2}\mathrm{O}(1)\biggr)\\
&=\frac{e^{{(n-l)}\eta(z)}}{\sqrt{2\pi n}(1+z^2)^\frac{1}{4}}e^{zl\eta'(z)}\biggl(1+\frac{l}{2n}+\frac{1}{n^2}\mathrm{O}(1)\biggr)\biggl(1+\frac{1}{n}v_{1,l}(z)+\frac{1}{n^2}\mathrm{O}(1)\biggr)\\
&=\frac{e^{{(n-l)}\eta(z)}}{\sqrt{2\pi n}(1+z^2)^\frac{1}{4}}e^{zl\eta'(z)}\biggl(1+\frac{1}{n}\Bigl(v_{1,l}(z)+\frac{l}{2}\Bigr) +\frac{1}{n^2}\mathrm{O}(1) \biggr).
\end{split}
\]

Si l'on prend un autre entier $l'$ alors on déduit directement  de la dernière formule que
\begin{equation}
I_{n-l}(nz)I_{n-l'}(nz)=\frac{e^{2n\eta(z)}}{2\pi n(1+z^2)^\frac{1}{2}}e^{-(l+l')(\eta(z)-z\eta'(z))}\biggl(1+\frac{1}{n}\bigl(v_{1,l}(z)+v_{1,l'}(z)+\frac{l+l'}{2}\bigr) +\frac{1}{n^2}\mathrm{O}(1) \biggr).
\end{equation}
\end{proof}

On pose
\[
 w_{l,l'}(z):=v_{1,l}(z)+v_{1,l'}(z)+\frac{l+l'}{2}.
\]

De \eqref{bessel2}, on a

\[
 I_{n+m+1}(nz)I_{n}(nz)=\frac{e^{2n\eta(z)}}{2\pi n(1+z^2)^\frac{1}{2}}e^{(m+1)(\eta(z)-z\eta'(z))}\biggl(1+\frac{1}{n}\Bigl(v_{1,-(m+1)}+v_{1,0}-\frac{m+1}{2}\Bigr)+\mathrm{O}(\frac{1}{n^2}) \biggr)
\]
et

\[
I_{n+m}(nz)I_{n-1}(nz)=\frac{e^{2n\eta(z)}}{2\pi n(1+z^2)^\frac{1}{2}}e^{(m-1)(\eta(z)-z\eta'(z))}\biggl(1+\frac{1}{n}\Bigl(v_{1,-m}+v_{1,1}-\frac{m-1}{2}\Bigr)+\mathrm{O}(\frac{1}{n^2}) \biggr)
\]

donc
\[
 \begin{split}
&G_{n+m}(nz)=\frac{e^{2n\eta(z)}}{2\pi n(1+z^2)^\frac{1}{2}}e^{(m+1)(\eta(z)-z\eta'(z))}\biggl(1+e^{-2(\eta(z)-z\eta'(z))}+\frac{1}{n}\Bigl(v_{1,-(m+1)}+v_{1,0}-\frac{m+1}{2}\Bigr)\\
&+\frac{e^{-2(\eta(z)-z\eta'(z))}}{n}(v_{1,-m}+v_{1,1}-\frac{m+1}{2})+\mathrm{O}(\frac{1}{n^2}) \biggr)\\
&= \frac{e^{2n\eta(z)}}{2\pi n(1+z^2)^\frac{1}{2}}e^{(m+1)(\eta(z)-z\eta'(z))}(1+e^{-2( \eta(z)-z\eta'(z)})\biggl(1+\\
&\frac{1}{n}\Bigl(\frac{v_{1,-(m+1)}(z)+v_{1,0}(z)-\frac{m+1}{2}+(v_{1,-(m+1)}(z)+v_{1,0}(z)-\frac{m-1}{2})e^{-2(\eta(z)-z\eta'(z))}}{1+e^{-2(\eta(z)-z\eta'(z))}} \Bigr)+\mathrm{O}(\frac{1}{n^2}) \biggr)
 \end{split}
\]
Rappelons que
\[
\begin{split}
v_{1,l}(z)&=u_1(z)+lz(1+z^2)^\frac{1}{4}\frac{\pt }{\pt z}\bigl( \frac{1}{(1+z^2)\frac{1}{4}}\bigr)+\frac{l^2z^2}{2}\eta''(z).\\
\end{split}
\]
avec $u_1(z)=\frac{1}{8(1+z^2)^\frac{1}{2}}-\frac{5}{24(1+z^2)^\frac{3}{2}
}$.\\
Si l'on pose
\[
 p:=\frac{1}{(1+z^2)^\frac{1}{2}},
\]
on trouve  que
\[
\begin{split}
&\frac{v_{1,-(m+1)}(z)+v_{1,0}(z)-\frac{m+1}{2}+(v_{1,-(m+1)}(z)+v_{1,0}(z)-\frac{m-1}{2})e^{-2(\eta(z)-z\eta'(z))}}{1+e^{-2(\eta(z)-z\eta'(z))}}=\\
&=-\frac{1}{4}\bigl(2m^2+2m+1\bigr)p+\frac{1}{12}p^3,
\end{split}
\]
qu'on notera par $w_m$, ce qui termine la preuve de la proposition \eqref{developpementG}.\\

\begin{remarque}
\rm{ Si l'on prend $m=0$, c'est à dire le cas correspondant au fibré trivial, on retrouve bien le résultat précédemment calculé.}
\end{remarque}

\begin{remarque}\rm{\[
w_0(z)=V_1(z)+U_1(z),
\]
voir notations \eqref{developpement1} et \eqref{developpement2}.\\
}
\end{remarque}

On considère la fonction zêta suivante:
\[
\zeta_{\geq m+1}:=\sum_{n\geq 1}\sum_{\la\in Z_{n+m}}\frac{1}{\la^{2s}},
\]
Rappelons que c'est la fonction zêta associée à la famille $\bigl\{G_{n+m}\bigl|\; n\geq 1 \bigr\}$. Avec les notations du théorème \eqref{regularisation} et en utilisant les développement asymptotiques \eqref{dvlptz1}, on a pour tout $n\geq 1$:
\[
\begin{split}
 p_{m,n}(z)&:=-\log G_{n+m}(nz)+\bigl(2n+m-1\bigr)\log (nz)-\bigl(2n+m-1\bigr)\log 2
-\log \Gamma(n)\\
&-\log \Gamma(n+m+1)+\frac{w_m(z)}{n}\\
&=-2nz+ \log(\pi nz)+\bigl(2n+m-1\bigr)\log (nz)-\bigl(2n+m-1\bigr)\log 2-\log \Gamma(n)\\
&-\log \Gamma(n+m+1)+ \mathrm{O}(\frac{1}{z})\quad \text{par}\, \eqref{developpementGn}.
\end{split}
\]
Par suite
\[
 a_{m,n}=n+\frac{m}{2},
\]
et
\[
 b_{m,n}:=-\log\Gamma(n+m+1)-\log \Gamma(n)+(2n+m)\log n-(2n+m-1)\log 2+\log \pi.
\]

Donc
\[
 A_m(s)=\zeta_\Q(2s-1)+\frac{m}{2}\zeta_\Q(2s),
\]
et
\[
\begin{split}
B_m(s)&=-\sum_{n\geq 1}\bigl(\frac{\Gamma(n+m+1)}{n^{2s}}+\frac{\log \Gamma(n)}{n^{2s}} \bigr)+2\sum_{n\geq 1}\frac{\log n}{n^{2s-1}}+m\sum_{n\geq 1}\frac{\log n}{n^{2s}}-2\log 2\sum_{n\geq 1}\frac{1}{n^{2s-1}}\\
&-(m-1)\log 2\sum_{n\geq 1}\frac{1}{n^{2s}}+\log \pi \sum_{n\geq 1}\frac{1}{n^{2s}}\\
&=-\sum_{n\geq 1}\frac{\log[(n+m)(n+m-1)\cdots n]}{n^{2s}}-2\sum_{n\geq 1} \frac{\log \Gamma(n)}{n^{2s}}-2\zeta_\Q'(2s-1)-m\zeta_\Q'(2s)\\
&-2\log 2 \;\zeta_\Q(2s-1)-(m-1)\log 2\;\zeta_\Q(2s)+\log \pi\; \zeta_\Q(2s)\\
&=-\sum_{n\geq 1}\sum_{k=1}^m \frac{\log (n+k)}{n^{2s}}-2\sum_{n\geq 1}\frac{\log \Gamma(n)}{n^{2s}}-2\zeta_\Q'(2s-1)-(m-1)\zeta_\Q'(2s)-2\log 2\; \zeta_\Q(2s-1)\\
&-(m-1)\log 2\; \zeta_\Q(2s)+\log \pi\; \zeta_\Q(2s).
\end{split}
\]
et
\[
P_m(s)=\sum_{n\geq 1}\frac{w_m(0)}{n^{2s+1}}=-\sum_{n\geq 1}\frac{1}{n^{2s+1}}\Bigl(\frac{m(m+1)}{2}+\frac{1}{6}\Bigr).
\]

et on note  par $F_m$,
\[
F_m(s):=\frac{s^2}{\Gamma(s+1)}\zeta_\Q(2s+1)\int_0^\infty \frac{t^{s-1}}{2\pi i}\int_{\Lambda_c}\frac{e^{-\la t}}{-\la}w_m(\la)d\la dt
\]
On montre que
\[
 F_m(s)=\frac{s^2}{\Gamma(s+1)}\zeta_\Q(2s+1)\Bigl(\frac{2+6m+6m^2}{s}-2 \Bigr)\frac{\Gamma(s+\frac{1}{2})}{12\sqrt{\pi}} \forall \,|s|\ll1,
\]

et que
\[
 F'_m(0)=\frac{1}{6}\gamma +\frac{1}{2}m(m+1)\gamma -\frac{1}{6}\log 2-\frac{1}{2}m(m+1)\log 2-\frac{1}{12}. \\
\]

On a besoin d'expliciter dans la formule de $\zeta_{\geq m+1}$, la quantité $P_m-B_m$. C'est l'objet du lemme suivant:
\begin{lemma}
Pour $|s|\ll1$, on a\[
\begin{split}
 (-B_m+P_m)(s)&=\sum_{k=1}^m \gamma_k(s)+2\eta(s)-m\zeta_\Q'(2s)+2\zeta_\Q'(2s-1)+(m-1)\zeta_\Q'(2s)\\
&+2\log 2 \,\zeta_\Q(2s-1)+(m-1)\log 2\,\zeta_\Q(2s)-\log \pi \,\zeta_\Q(2s),
\end{split}
\]
avec $\gamma_k(s)=\sum_{n\geq 1}\frac{1}{n^{2s}}\bigl( \log(1+\frac{k}{n})-\frac{k}{n}\bigr)$, qui est une fonction analytique au voisinage de $s=0$.
\end{lemma}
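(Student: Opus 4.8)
Il s'agit de calculer explicitement la quantité $(-B_m+P_m)(s)$ au voisinage de $s=0$. Le plan est de partir des expressions de $B_m(s)$ et $P_m(s)$ déjà établies juste avant l'énoncé, puis de regrouper les termes en isolant la partie « transcendante » $\sum_{n\ge 1}\frac{\log\Gamma(n)}{n^{2s}}$, qui est précisément celle qui intervient dans la fonction $\eta$ définie en \eqref{eta}. L'idée centrale est que chaque occurrence de $\sum_n\frac{\log\Gamma(n)}{n^{2s}}$ doit être remplacée par $\eta(s)+\frac{1}{12}\zeta_\Q(2s+1)$, ce qui fait apparaître des termes en $\zeta_\Q(2s+1)$ qui se compenseront avec ceux de $P_m$.

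\medskip

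\noindent\textbf{Les étapes.} Je partirais de l'expression
\[
-B_m(s)=\sum_{n\ge 1}\sum_{k=1}^m\frac{\log(n+k)}{n^{2s}}+2\sum_{n\ge1}\frac{\log\Gamma(n)}{n^{2s}}+2\zeta_\Q'(2s-1)+(m-1)\zeta_\Q'(2s)+2\log2\,\zeta_\Q(2s-1)+(m-1)\log2\,\zeta_\Q(2s)-\log\pi\,\zeta_\Q(2s).
\]
La première somme double se traite en écrivant $\log(n+k)=\log n+\log(1+\tfrac{k}{n})$ et $\log(1+\tfrac{k}{n})=\bigl(\log(1+\tfrac{k}{n})-\tfrac{k}{n}\bigr)+\tfrac{k}{n}$. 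Le terme $\log n$ produit $\sum_k\bigl(-\zeta_\Q'(2s)\bigr)=-m\zeta_\Q'(2s)$, le terme $\tfrac{k}{n}$ produit $\sum_k k\,\zeta_\Q(2s+1)=\tfrac{m(m+1)}{2}\zeta_\Q(2s+1)$, et le reste définit précisément $\sum_{k=1}^m\gamma_k(s)$ avec $\gamma_k(s)=\sum_{n\ge1}\frac{1}{n^{2s}}\bigl(\log(1+\tfrac{k}{n})-\tfrac{k}{n}\bigr)$. Ensuite, pour le terme $2\sum_n\frac{\log\Gamma(n)}{n^{2s}}$, j'utiliserais la définition \eqref{eta}, $\sum_n\frac{\log\Gamma(n)}{n^{2s}}=\eta(s)+\frac{1}{12}\zeta_\Q(2s+1)$, ce qui apporte $2\eta(s)+\frac{1}{6}\zeta_\Q(2s+1)$. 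Enfin, j'additionnerais $P_m(s)=-\bigl(\tfrac{m(m+1)}{2}+\tfrac16\bigr)\zeta_\Q(2s+1)$.

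\medskip

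\noindent\textbf{Le point clé.} Le cœur du calcul est la compensation des termes en $\zeta_\Q(2s+1)$. En effet, la somme double contribue $+\tfrac{m(m+1)}{2}\zeta_\Q(2s+1)$, le remplacement de $\sum_n\frac{\log\Gamma(n)}{n^{2s}}$ contribue $+\tfrac16\zeta_\Q(2s+1)$, et $P_m$ contribue $-\bigl(\tfrac{m(m+1)}{2}+\tfrac16\bigr)\zeta_\Q(2s+1)$ : le total est nul. C'est exactement le mécanisme qui garantit l'analyticité au voisinage de $s=0$, puisque c'est le seul terme porteur d'un pôle en $s=0$ (via $\zeta_\Q(2s+1)=\tfrac{1}{2s}+\gamma+o(1)$), et que le lemme \eqref{P-B} assure déjà abstraitement que $P-B$ est analytique en $0$. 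Après regroupement, on collecte les termes en $\zeta_\Q'(2s)$ : le $-m\zeta_\Q'(2s)$ issu du $\log n$ et le $(m-1)\zeta_\Q'(2s)$ de $-B_m$, ce qui reproduit bien les coefficients de l'énoncé. L'obstacle principal n'est pas conceptuel mais purement comptable : il faut suivre sans erreur les coefficients des termes $\zeta_\Q'(2s-1)$, $\zeta_\Q'(2s)$, $\zeta_\Q(2s-1)$ et $\zeta_\Q(2s)$ à travers tous les regroupements.

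\medskip

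\noindent\textbf{Analyticité de $\gamma_k$.} Il reste à justifier que chaque $\gamma_k$ est analytique au voisinage de $s=0$. Pour cela j'invoquerais le développement $\log(1+\tfrac{k}{n})-\tfrac{k}{n}=-\tfrac{k^2}{2n^2}+O(\tfrac{1}{n^3})$ pour $n\gg1$, de sorte que le terme général de $\gamma_k(s)$ est un $O(\tfrac{1}{n^{2s+2}})$. La série converge donc normalement sur tout demi-plan $\mathrm{Re}(s)>-\tfrac12+\varepsilon$, en particulier sur un voisinage ouvert de $s=0$, d'où l'analyticité. En rassemblant les étapes précédentes, tous les termes en $\zeta_\Q(2s+1)$ disparaissent et l'on obtient exactement l'expression annoncée pour $(-B_m+P_m)(s)$, ce qui achève la preuve du lemme.
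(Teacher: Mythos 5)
Votre démonstration est correcte et suit essentiellement la même démarche que celle du texte : décomposition de $\log(n+k)$ en $\log n+\bigl(\log(1+\tfrac{k}{n})-\tfrac{k}{n}\bigr)+\tfrac{k}{n}$, absorption de $\sum_n\frac{\log\Gamma(n)}{n^{2s}}$ dans $\eta$, compensation exacte des termes en $\zeta_\Q(2s+1)$ avec $P_m$, et majoration $\bigl|\log(1+\tfrac{k}{n})-\tfrac{k}{n}\bigr|\leq\tfrac{k^2}{n^2}$ pour la convergence normale de $\gamma_k$ au voisinage de $s=0$. La seule différence est purement rédactionnelle (le texte regroupe directement $\tfrac{m(m+1)}{2}\sum_n n^{-2s-1}$ avec la somme double au lieu d'expliciter la compensation), ce qui ne change rien au fond.
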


\begin{proof}

Par application du théorème des accroissements finis à la fonction $x\mapsto \log(1+x)-x$ sur  $\R^+$ on a
\[
 \Bigl|\log(1+\frac{k}{n})-\frac{k}{n}\Bigr|\leq \frac{k^2}{n^2}\quad  \forall\, n\geq 1.
\]
Donc la série numérique suivante:
\[
 \gamma_{k}(s)=\sum_{n\geq 1}\frac{1}{n^{2s}}\bigl( \log(1+\frac{k}{n})-\frac{k}{n}\bigr),
\]
converge pour tout complexe fixé $s$ vérifiant $\mathrm{Re}(s)>-\frac{1}{2}$.\\

De plus, si $s$ est tel que $|\mathrm{Re}(s)|\leq \frac{1}{4}$ alors

\[
\Bigl| \sum_{n\geq N+1} \frac{1}{n^{2s}}(\log(1+\frac{k}{n})-\frac{k}{n}) \Bigr|\leq k^2\sum_{n\geq N+1}\frac{1}{n^\frac{3}{2}} \quad \forall\, N\in \N^\ast,
\]
d'où la convergence normale et comme les somme partielles sont continues en $s$ sur $|\mathrm{Re}(s)|<\frac{1}{2}$ alors $\gamma_k$ est continue et
\[
 \gamma_k(0)=\sum_{n\geq 1}\log(1+\frac{k}{n})-\frac{k}{n}
\]

On sait que,  voir \cite[6.1.3]{Table2},
\[
 -\log \Gamma(z)=\log z +\gamma z +\sum_{n\geq 1} \log (1+\frac{z}{n})-\frac{z}{n}\quad \mathrm{Re}(z)>0,
\]
donc
\[
 \gamma_k(0)=-\log \Gamma(k+1)-\gamma k.
\]

On a
\[
 \begin{split}
  \sum_{n\geq 1}\sum_{k=1}^m\frac{\log(n+k)}{n^{2s}}\;-&\frac{m(m+1)}{2}\sum_{n\geq 1}\frac{1}{n^{2s+1}}= \sum_{n\geq 1}\sum_{k=1}^m \Bigl(\frac{\log(n+k)}{n^{2s}}-\frac{k}{n^{2s+1}} \Bigr) \\
&=\sum_{n\geq 1}\Bigl(m\frac{\log n}{n^{2s}}+\frac{1}{n^{2s}}\sum_{k=1}^m\bigl(\log(1+\frac{k}{n})-\frac{k}{n} \bigr)\Bigr)\\
&=-m\zeta_\Q'(2s)+\sum_{n\geq 1}\frac{1}{n^{2s}}\sum_{k=1}^m\bigl(\log(1+\frac{k}{n})-\frac{k}{n}\bigr).
 \end{split}
\]

En $s=0$, la fonction précédente est égale à
\[
 \begin{split}
-m\zeta_\Q'(0)+\sum_{k=1}^m\sum_{n\geq 1}\Bigl(\log(1+\frac{k}{n})-\frac{k}{n} \Bigr)&=-m\zeta_\Q'(0)+\sum_{k=1}^m \bigl(-\log \Gamma(k)-\log k -\gamma k\bigr)\\
&=-m\zeta_\Q'(0)-\gamma \frac{m(m+1)}{2}-\sum_{k=1}^m\log \Gamma(k+1).
 \end{split}
\]

On déduit que
{\allowdisplaybreaks
\begin{align*}
 (-B_m+P_m)(s)&=\sum_{n\geq 1}\sum_{k=1}^m \frac{1}{n^{2s}}\Bigl(\log (1+\frac{k}{n})-\frac{k}{n}\Bigr)-m\zeta_\Q'(2s)+2\sum_{n\geq 1}\Bigl(\frac{\log \Gamma(n)}{n^{2s}}-\frac{1}{n^{2s+1}} \Bigr)\\
&+2\zeta_\Q'(2s-1)+(m-1)\zeta_\Q'(2s)+2\log 2\, \zeta_\Q(2s-1)+(m-1)\log 2\,\zeta_\Q(2s)\\
&-\log \pi\, \zeta_\Q(2s)\\
&=\sum_{k=1}^m \gamma_k(s)+2\eta(s)-m\zeta_\Q'(2s)+2\zeta_\Q'(2s-1)+(m-1)\zeta_\Q'(2s)\\
&+2\log 2 \,\zeta_\Q(2s-1)+(m-1)\log 2\,\zeta_\Q(2s)-\log \pi\, \zeta_\Q(2s).
\end{align*}
}

\end{proof}

On peut donc évaluer les valeurs de $\zeta_{\geq m+1}(0)$ et $\zeta_{\geq m+1}'(0)$. On a
\[
 \begin{split}
  \zeta_{n\geq m+1}(0)&=-A_{m}(0)+F_m(0)\\
&=\frac{1}{12}+\frac{m}{4}+\frac{1}{12}(1+3m+3m^2)\\
&=\frac{1}{6}+\frac{m}{2}+\frac{m^2}{4}
 \end{split}
\]
et
\[
 \zeta'_{n\geq m+1}(0)=2\zeta_\Q'(-1)-\sum_{k=1}^m \log \Gamma(k+1)+\Bigl(\frac{1}{6}-\frac{m(m+1)}{2}\Bigr)\log 2+\frac{m+1}{2}\log \pi-\frac{1}{12}.
\]

 Notons par $z_m$,
\[
z_m(s):=\sum_{n\geq 0}\sum_{\la\in Z_n}\frac{1}{\la^{2s}},
\]
(ici les zéros sont comptés avec leurs multiplicités).\\

Rappelons  que
\[\zeta_{\Delta_{\overline{\mathcal{O}(m)_\infty}}}(s)=4^s z_m(s).\]

Il reste à déterminer la contribution de $\bigl\{G_{n},\, 0\leq n\leq m  \bigr\}$:
\begin{enumerate}
\item
\textbf{Si $m$ est pair}. Remarquons que $G_n=G_{n-m}$. On a
\[
z_m(s)=2\zeta_{\geq m+1}(s)+2\zeta_{0\leq n\leq \frac{m}{2}-1}(s)+\zeta_{\frac{m}{2}}(s),
\]
avec
\[
\zeta_{0\leq n \leq \frac{m}{2}-1}(s)=\sum_{0\leq n \leq \frac{m}{2}-1}\sum_{\la\in Z_{n}}\frac{1}{\la^{2s}},
\]
et
\[
\zeta_{\frac{m}{2}}(s)=\sum_{\la\in Z_{\frac{m}{2}}}\frac{1}{\la^{2s}}.
\]
En appliquant \eqref{valeurszeta}, on a
\[
\zeta_{0\leq n \leq \frac{m}{2}-1}(0)=-\sum_{0\leq n \leq \frac{m}{2}-1}\frac{m+2}{2},
\]
et
\[
\begin{split}
\zeta_{0\leq n \leq \frac{m}{2}-1}'(0)&=-\sum_{0\leq n \leq \frac{m}{2}-1}b_n\\
&=-\sum_{0\leq n \leq \frac{m}{2}-1}\Bigl( \log \pi -(m+1)\log 2-\log (n+1)!-\log (m-n+1)!\\
&+\log (m+2)\Bigr)\\
&=-\log \pi \frac{m}{2}+\frac{m(m+1)}{2}\log 2-\frac{m}{2}\log(m+2)+\sum_{0\leq n \leq \frac{m}{2}-1}\Bigl(\log (n+1)!\\
&+\log (m-n+1)!\Bigr)\\
&=-\log \pi \,\frac{m}{2}+\frac{m(m+1)}{2}\log 2-\frac{m}{2}\log(m+2)+\sum_{0\leq n\leq m, n\neq \frac{m}{2}}\log (n+1)!,
\end{split}
\]
On a aussi,
\[
\zeta_{\frac{m}{2}}(0)=\frac{m+2}{2},
\]
et
\[
\zeta_{\frac{m}{2}}'(0)=\log \pi -(m+1)\log 2-\log \bigl(\frac{m}{2}+1\bigr)!-\log \bigl(\frac{m}{2}+1\bigr)!+\log (m+2).
\]
En regroupant tout cela,  on obtient:
\[
z_m(0)=2\Bigl(\frac{1}{6}+\frac{m}{2}+\frac{m^2}{4}\Bigr)- 2\frac{m(m+2)}{4}-\frac{m+2}{2}=-\frac{2}{3}-\frac{m}{2},
\]
 et
 \[
\begin{split}
 z_m'(0)&=2\Bigl(2\zeta_\Q'(-1)-\sum_{k=1}^m \log \Gamma(k+1)+(\frac{1}{6}-\frac{m(m+1)}{2})\log 2+\frac{m+1}{2}\log \pi-\frac{1}{12}
\Bigr)\\
&+2\Bigl(-\log \pi \frac{m}{2}+\frac{m(m+1)}{2}\log 2-\frac{m}{2}\log(m+2)+\sum_{0\leq n\leq m, n\neq \frac{m}{2}}\log (n+1)!\Bigr)\\
&-\log \pi +(m+1)\log 2+\log \bigl(\frac{m}{2}+1\bigr)!+\log \bigl(\frac{m}{2}+1\bigr)!-\log (m+2)\\
&=4\zeta'_\Q(-1)+2\log (m+1)!+\bigl(m+\frac{4}{3}\bigr)\log 2-\frac{1}{6}-(m+1)\log(m+2)\\
&= 4\zeta'_\Q(-1)-\frac{1}{6}+\bigl(m+\frac{4}{3}\bigr)\log 2-2\log \frac{(m+2)^{\frac{m+1}{2}}}{(m+1)!}
\end{split}
 \]

\item \textbf{Si $m$ est impair}.
On rappelle que:
\[\quad z_m(s)=2\zeta_{\geq m+1}+2\zeta_{0\leq n\leq [\frac{m}{2}]}(s).\]
 Il suffit d'étudier la fonction
\[
\zeta_{0\leq n\leq [\frac{m}{2}]}(s)=\sum_{0\leq n\leq [\frac{m}{2}]}\sum_{\la \in Z_n}\frac{1}{\la^{2s}},
\]

 et on vérifie que
\[
 \zeta_{0\leq n\leq [\frac{m}{2}]}(0)=-\frac{(m+1)(m+2)}{4},
\]
et que
\[
\zeta'_{0\leq n\leq [\frac{m}{2}]}(0)=-\frac{m+1}{2}\log \pi +\frac{(m+1)^2}{2}\log 2-\frac{1}{2}\log \Bigl(\frac{(m+2)^{m+1}}{((m+1)!)^2}\Bigr) +\sum_{k=1}^m \log \Gamma(k+1).
\]

\end{enumerate}

Donc,
\[
\begin{split}
 z_m(0)&=2\Bigl(\frac{1}{6}+\frac{m}{2}+\frac{m^2}{4}-\frac{m^2+3m+2}{4}  \Bigr)\\
&=-\frac{2}{3}-\frac{m}{2},
\end{split}
\]
et

\[
\begin{split}
 z'_{ m}(0)&= 2\zeta'_{n\geq m+1}+2\zeta'_{0\leq n\leq [\frac{m}{2}]}(0)\\
&=4\zeta_\Q'(-1)+\bigl(\frac{4}{3}+m\bigr)\log 2-\frac{1}{6}-\log \Bigl(\frac{(m+2)^{m+1}}{((m+1)!)^2}\Bigr).
\end{split}
\]

On conclut que pour tout $m\geq 1$ on a:

\[
\zeta_{\Delta_{\overline{\mathcal{O}(m)_\infty}}}(0)=z_m(0)=-\frac{2}{3}-\frac{m}{2},\]
et
\[
\begin{split}
 \zeta_{\Delta_{\overline{\mathcal{O}(m)_\infty}}}'(0)&=z_m'(0)+\log 4 z_{m}(0)\\
 &=4\zeta_\Q'(-1)-\frac{1}{6}-2\log \frac{(m+2)^{\frac{m+1}{2}}}{(m+1)!}+\bigl(\frac{4}{3}+m\bigr)\log 2-\bigl(\frac{4}{3}+m\bigr)\log 2 \\
 &=4\zeta_\Q'(-1)-\frac{1}{6}-\log h_{L^2,\infty,\infty}+\bigl(\frac{4}{3}+m\bigr)\log 2-(\frac{4}{3}+m)\log 2, \quad \text{voir}\,\eqref{notevolume} \\
&=4\zeta_\Q'(-1)-\frac{1}{6}-\log h_{L^2,\infty,\infty},
\end{split}
\]
qu'on écrit sous la forme

\begin{equation}\label{valeurtorsion}
 \zeta_{\Delta_{\overline{\mathcal{O}(m)_\infty}}}'(0)+\log h_{L^2,\infty,\infty}=4\zeta_\Q'(-1)-\frac{1}{6}.
\end{equation}

Par les notations et les calculs du paragraphe \eqref{tttt}, on constate que

\[\zeta_{\Delta_{\overline{\mathcal{O}(m)_\infty}}}'(0)=T_g\bigl(\overline{T\p^1}_\infty; \overline{\mathcal{O}(m)}_\infty\bigr).\]

\section{Annexe}
\subsection{Sur un calcul de classes de Bott-Chern}

Soit $X$ une variété analytique  complexe. Pour tout $p$ et $q$ deux entiers positifs, on note par $A^{(p,q)}(X)$ l'ensemble   des formes différentielles de degré $(p,q)$ sur $X$, par $\widetilde{A}^{(p,p)}(X)$ le quotient de $A^{(p,p)}(X)$ par $\pt A^{(p-1,p)}(X)+\overline{\pt}A^{(p,p-1)}(X)$ et on pose

\[
\widetilde{A}(X)=\oplus_{p\in \N}\widetilde{A}^{(p,p)}(X).
\]
Si $\omega\in \widetilde{A}(X)$, on note pour tout $k\in \N$,  $[\omega]^{(k)}$ l'élément de $\widetilde{A}^{(k,k)}(X)$ tel que $\omega=\sum_{k\in \N }[\omega]^{(k)}$.\\

A toute série formelle symétrique $\phi$ on peut associer une classe caractéristique encore notée $\phi$ et une
classe secondaire de Bott-Chern $\widetilde{\phi}$ comme dans \cite[§ 1]{Character}. Si  $\phi(x)=\sum_{k\in \N}a_k
x^k$ est une série formelle en une seule variable $x$, on a pour tout fibré en
droites holomorphe $L$,
\[
 \phi(L)=\sum_{k\in \N}a_k c_1(L)^k.
\]
Soit $\overline{\mathcal{E}}$ une suite exacte de fibrés en droites hermitiens sur $X$:
\[
 \overline{\mathcal{E}}:0\lra (L,h_1) \lra (L,h_2)\lra 0,
\]
et on note par $ \widetilde{\phi}(L,h_1,h_2)$ la classe caractéristique associée. D'après \cite[Proposition
1.3.1]{Character}, on a:
\[
 \widetilde{\phi}(L,h_1,h_2)=\sum_{k\in \N}a_k \,\widetilde{c_1^k}(L,h_1,h_2).
\]
Donc, pour déterminer $ \widetilde{\phi}(L,h_1,h_2)$, il suffit de calculer les
$\widetilde{c_1^k}(L,h_1,h_2)$ pour tout $k\in \N$. C'est le but de la proposition suivante:

\begin{proposition}\label{ch}
Soit $X$ une variété  analytique  complexe et $L$ un fibré en droites holomorphe sur $X$.  Si $L$ est muni  de deux métriques hermitiennes de classes $\mathcal{C}^\infty$, $h_1$ et $h_2$, alors
\begin{equation}\label{bc1}
 \widetilde{\mathrm{ch}}(L,h_1,h_2)=\sum_{k\geq 1} -\frac{1}{k!}\log \frac{h_2}{h_1} \Bigl(dd^c(-\log
\frac{h_2}{h_1})\Bigr)^{{k-1}} \mathrm{ch}(L,h_1),
\end{equation}
et
\begin{equation}
 \widetilde{c_1^k}(L,h_1,h_2)=k!\bigl[  \widetilde{\mathrm{ch}}(L,h_1,h_2)\bigr]^{(k)},
\end{equation}

dans $\widetilde{A}(X)$, où $f$ est  une  fonction  est de classe $\cl$ sur $X$ définie localement par $-\log \frac{h_2(s,s)}{h_1(s,s)}$,  avec $s$  est une section holomorphe locale de $L$.
\end{proposition}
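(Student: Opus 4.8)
The plan is to deduce both formulas from a single transgression computation. By \cite[Proposition 1.3.1]{Character} applied to $\phi=\mathrm{ch}$, whose power-series coefficients are $a_k=1/k!$, one has in $\widetilde{A}(X)$
\[
\widetilde{\mathrm{ch}}(L,h_1,h_2)=\sum_{k\in\N}\frac{1}{k!}\,\widetilde{c_1^k}(L,h_1,h_2).
\]
As each $\widetilde{c_1^k}(L,h_1,h_2)$ is of pure degree, comparing homogeneous components on the two sides isolates every $\widetilde{c_1^k}$ in terms of the corresponding homogeneous part of $\widetilde{\mathrm{ch}}$, which is exactly the second identity of the proposition. Hence it is enough to establish the closed formula \eqref{bc1} for $\widetilde{\mathrm{ch}}(L,h_1,h_2)$ itself.

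To compute $\widetilde{\mathrm{ch}}(L,h_1,h_2)$ I would run the transgression construction of \cite[\S 1]{Character} for the single line bundle $L$ carrying the two metrics. Put $f:=-\log(h_2/h_1)$, a globally defined $\cl$ function on $X$, and write locally $\omega_i:=c_1(L,h_i)=dd^c(-\log h_i)$, so that $\beta:=dd^c f=\omega_2-\omega_1$. Interpolating by $h_t:=h_1\,(h_2/h_1)^{t}$ produces the family of first Chern forms $\omega_t=\omega_1+t\beta$, $t\in[0,1]$. The computational lemma I rely on is that $dd^c(f\alpha)=(dd^c f)\wedge\alpha$ whenever $\alpha$ is $d$-closed of pure type $(p,p)$ — indeed $\pt\alpha=\overline{\pt}\alpha=0$ kills the cross terms — and, applied degree by degree to the closed form $e^{\omega_t}$, it gives
\[
\frac{d}{dt}\,\mathrm{ch}(L,h_t)=\beta\,e^{\omega_t}=dd^c\bigl(f\,e^{\omega_t}\bigr).
\]
Integrating in $t$ over $[0,1]$ exhibits $f\int_0^1 e^{\omega_t}\,dt$ as a representative of $\widetilde{\mathrm{ch}}(L,h_1,h_2)$, with transgression $dd^c\widetilde{\mathrm{ch}}=\mathrm{ch}(L,h_2)-\mathrm{ch}(L,h_1)$. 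Since $\omega_1$ and $\beta$ are even-degree forms they commute, so $e^{\omega_t}=e^{\omega_1}e^{t\beta}$ and $\int_0^1 e^{t\beta}\,dt=\sum_{m\geq 0}\beta^m/(m+1)!$; writing $k=m+1$ and recalling $f=-\log(h_2/h_1)$ turns $f\int_0^1 e^{\omega_t}\,dt$ into $\sum_{k\geq 1}-\tfrac{1}{k!}\log(h_2/h_1)\,\bigl(dd^c(-\log(h_2/h_1))\bigr)^{k-1}\mathrm{ch}(L,h_1)$, which is precisely \eqref{bc1}.

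The delicate point is not the algebra but the identification: one must check that the explicit representative $f\int_0^1 e^{\omega_t}\,dt$ is the Bott--Chern class of \cite[\S 1]{Character}, i.e. that the equality holds in $\widetilde{A}(X)=\bigoplus_p A^{(p,p)}(X)/(\mathrm{Im}\,\pt+\mathrm{Im}\,\overline{\pt})$. This is the standard statement that the secondary class of a smooth one-parameter family of metrics is computed by the $t$-integral of the transgression of the variation; for a line bundle the abelian nature of the calculation means no higher-order transgression terms arise, and the characterizing properties of the Bott--Chern class (its transgression, functoriality, and vanishing when $h_1=h_2$), all visibly satisfied by $f\int_0^1 e^{\omega_t}\,dt$, pin it down uniquely. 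I would keep the sign convention $dd^c\widetilde{\mathrm{ch}}=\mathrm{ch}(L,h_2)-\mathrm{ch}(L,h_1)$ fixed throughout; the remaining steps (global well-definedness of $f$, finiteness of the sums on a finite-dimensional $X$, and the binomial bookkeeping in the resummation) are routine.
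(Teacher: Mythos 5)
Your argument is correct, but it proves the formula by a genuinely different route from the paper. The paper works directly from the definition of the Bott--Chern class in \cite{Character}: it first reduces to the trivial bundle via $\widetilde{\mathrm{ch}}(L,h_1,h_2)=\widetilde{\mathrm{ch}}(\mathcal{O},h_1\otimes h_1^{-1},h_2\otimes h_1^{-1})\,\mathrm{ch}(L,h_1)$, then computes $-\int_{\p^1}\mathrm{ch}(\mathcal{O},h_\rho)\log|z|^2$ on $X\times\p^1$ for the interpolating metric defined by $\rho f$, expanding $(dd^c(\rho f))^j$, isolating the component containing $dz\wedge d^c z$, and integrating over $\p^1$. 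You instead deform along the path of metrics $h_t=h_1(h_2/h_1)^t$, obtain the explicit transgression $f\int_0^1 e^{\omega_t}\,dt$ from $\tfrac{d}{dt}\mathrm{ch}(L,h_t)=dd^c(f\,e^{\omega_t})$, and resum; this is cleaner and makes the coefficients $1/k!$ transparent, but it shifts the burden onto the identification of your representative with the class of \cite{Character}. That identification is exactly where your proof leans on an external input: the axiomatic uniqueness of Bott--Chern secondary classes (the $dd^c$ equation with the correct sign, functoriality under pullback, and vanishing for $h_1=h_2$), which you should cite explicitly (e.g. \cite[Th�or�me 1.29]{BGS1} or the corresponding statement in \cite{Character}) and for which you should note that the uniqueness argument, which itself runs through the $X\times\p^1$ deformation, stays inside the subcategory of line bundles with two metrics. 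With that reference supplied, your proof is complete; the paper's computation is longer but self-contained relative to the definition. The deduction of $\widetilde{c_1^k}=k!\bigl[\widetilde{\mathrm{ch}}\bigr]^{(k)}$ by comparing homogeneous components of $\widetilde{\mathrm{ch}}=\sum_k\frac{1}{k!}\widetilde{c_1^k}$ is the same in both treatments.
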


\begin{proof}    Soit  $L$ un fibré en droites holomorphe sur $X$ qu'on munit de  deux métriques hermitiennes $h_{1}$ et $h_2$ de classe $C^{\infty}$. Si $h$ est une autre métrique hermitienne $C^\infty$ sur $L$, on a:
\begin{enumerate}
\item $\widetilde{\mathrm{ch}}(L,h,h)=0$, voir \cite[Théorème 1.2.2 (iii)]{Character},
\item $\widetilde{\mathrm{ch}}(L,h_1,h_2)=\widetilde{ch}(L,h_1,h)+\widetilde{\mathrm{ch}}(L,h,h_2)$ voir \cite[Corollaire 1.3.4]{Character},
\item $\widetilde{\mathrm{ch}}(L,h_1,h_2)=\widetilde{\mathrm{ch}}(\mathcal{O},h_1\otimes h_1^{-1},h_2\otimes
h_1^{-1})\mathrm{ch} (L,h_1)$,
, voir \cite[(1.3.5.2)]{Character}.
\end{enumerate}

Il suffit donc  de déterminer  $\widetilde{\mathrm{ch}}(\mathcal{O},h_1,h_2)$ avec $h_1$ est la métrique triviale et $h_2$ est $\cl$ quelconque sur $\mathcal{O}$. \\

Sur $X\times \mathbb{P}^{1}$, on considère le fibré en droites trivial $\mathcal{O}$ muni de la métrique $h_\rho$ définie par la fonction $\rho f$, où $f$ est  la fonction qui définit la métrique $h_2$ (c.à.d $h_2(1,1)= e^{f} $) et $\rho$ est l'image réciproque par la projection de $X\times \p^1$ sur $\p^1$ d'une fonction positive $C^\infty$ sur $\mathbb{P}^1$ telle que $\rho (0)=1$ et $\rho (\infty)=0$.

Avec ces notations, on a d'après \cite{Character}:
\[
\widetilde{\mathrm{ch}}\bigl(\mathcal{O},h_1,h_2 \bigr)=-\int_{\mathbb{P}^1}\mathrm{ch} (\mathcal{O},\tilde{h})\log|z|^2
\]
dans $\widetilde{A}(X)$.
On a
\[
\begin{split}
\mathrm{ch}(\mathcal{O},\tilde{h})&=\mathrm{ch}\bigl(dd^{c}(\log \widetilde{h}(1,1)\bigr) \\
&=\mathrm{ch}\bigl(dd^{c}(\rho f)\bigr)\\
&=\sum_{j\in \mathbb{N}}\frac{1}{j!}\bigl(dd^{c}(\rho f)\bigr)^j.
\end{split}
\]
Montrons  que :
\[
\begin{split}
-\int_{\mathbb{P}^1}\mathrm{ch} (\mathcal{O},\tilde{h})\log|z|^2= \sum_{j\in\mathbb{N}}\frac{1}{j!} f (dd^{c}f)^{j}.
\end{split}
\]
Dans $A(X\times \p^1)=A(X)\otimes A(\p^1)$, on a pour tout $j\in \N^\ast$,
\[
\begin{split}
\bigl(dd^{c}(\rho f)\bigr)^j&=\Bigl(f dd^{c}(\rho)+d^{c}\rho d f+d\rho d^{c} f +\rho\, dd^{c} f\Bigr)^j\\
&=\omega_{1}^j + j((dd^{c}\rho)f) \omega_1^{j-1} + 0,\\
\end{split}
\]
où on a posé $\omega_1:=d^{c}\rho d f+d\rho d^{c} f +\rho dd^{c} f$ et les autres termes sont nuls puisque  $\bigl(dd^c \rho\bigr)^k=0$ pour $k\geq 2$. %$ \omega_1\in A(X)\otimes A^{(0,1)}(\p^1)+A(X)\otimes A^{(1,0)}(\p^1)$.\\
Par le même  argument, on a
\[
\begin{split}
\bigl(f\,dd^{c}\rho\bigr) \omega_1^{j-1}&=f \bigl(dd^{c}\rho\bigr)\Bigl(d^c \rho\, df + d\rho \, d^c f +\rho dd^cf \Bigr)^{j-1}\\
&=f(dd^c\rho)\Bigl(\rho^{j-1}(dd^cf)^{j-1}+\cdots\Bigr)\\
&=f \rho^{j-1} dd^c\rho\,(dd^cf)^{j-1}+ 0\\
&=(\rho^{j-1}dd^c\rho) f (dd^cf)^{j-1},
\end{split}
\]
et
\[\begin{split}
\omega_1^j&=\Bigl(d^{c}\rho \,d f+d\rho\, d^{c} f +\rho \,dd^{c}f\Bigr)^j\\
&=\rho^j (dd^cf)^j+j\rho^{j-1}\bigl(d^{c}\rho\, d f+d\rho\, d^{c} f\bigr)(dd^cf)^{j-1}+j
(j-1)\rho^{j-2}\bigl(d^{c}\rho d f+d\rho d^{c} f\bigr)^2 (dd^cf)^{j-2}+0\\
&=\rho^j (dd^cf)^j+j\bigl(d^{c}\rho d f+d\rho d^{c} f\bigr)(\rho^{j-1}(dd^cf)^{j-1})-2j (j-1)\rho^{j-2}(d\rho\,
d^c\rho)d^{c}f\, df (dd^cf)^{j-2}.
\end{split}
\]
Si $\alpha\in A(X\times \p^1)$, on va noter par $\lbrace\alpha \rbrace$ la composante de $\alpha$ contenant $dz \wedge d^c z$ (où $z$ une coordonnée locale sur $\mathbb{P}^{1}$).  On a donc,
\[\lbrace \omega_1^j\rbrace=-2j (j-1)\rho^{j-2}d\rho \,d^c\rho\,d^{c}f \,df (dd^cf)^{j-2}.
\]
Par suite,
\begin{align*}
\lbrace(dd^{c}&(\rho f))^j\rbrace=j\bigl(\rho^{j-1}dd^c\rho\bigr) f (dd^c f)^{j-1}-2j (j-1)\rho^{j-2}(d\rho d^c\rho)d^{c}f df (dd^cf)^{j-2}\\
&=j\rho^{j-1}dd^c\rho\,  f (dd^cf)^{j-1}+2j (j-1)\rho^{j-1}\,d\rho\, d^c\rho\,\Bigl(d(f d^{c}f)
-f dd^{c}f\Bigr)(dd^c f)^{j-2}\\
&=\rho^{j-1}\Bigl(j\,dd^c\rho-2j (j-1)d\rho\, d^c\rho\, \Bigr)f (dd^cf)^{j-1}+2j (j-1)\rho^{j-1}\,d\rho\,
d^c\rho\,d(f d^{c}f)
(dd^c f)^{j-2} \\
&= \alpha_{j}(z)f (dd^cf)^{j-1}+\beta_j(z) d(f d^{c}f)
(dd^c f)^{j-2}
\end{align*}
avec  $\alpha_{j}:=\rho^{j-1}\Bigl(j\,dd^c\rho-2j (j-1)d\rho\, d^c\rho\, \Bigr)$ et $\beta_j:= 2j
(j-1)\rho^{j-1}\,d\rho\, d^c\rho$ sont deux formes différentielle de degré maximal sur $\mathbb{P}^1$. \\

Donc, on a dans $\widetilde{A}(X)$,
\begin{align*}
\tilde{\mathrm{ch}}(\mathcal{O},h_1,h_2)&= -\int_{\mathbb{P}^{1}}\mathrm{ch}(dd^{c}(\rho f))\log |z|^2\\
&= -\sum_{j\geq 1}\frac{1}{j!} \Bigl(\int_{\mathbb{P}^{1}}\alpha_{j}(z) \log |z|^{2}\Bigr)f (dd^cf)^{j-1}-\sum_{j\geq 1}\frac{1}{j!}\Bigl(\int_{\p^1}\beta_j(z) \log |z|^2\Bigr) d(f d^{c}f)
(dd^c f)^{j-2} \\
&=\sum_{j\geq 1}\frac{1}{j!} f (dd^cf)^{j-1}.
\end{align*}

On a donc montré que:
\[
\begin{split}
\widetilde{ch}\bigl(L,h_1,h_2\bigr)&= \widetilde{ch}\bigl(\mathcal{O},h_1\otimes h_1^{-1},h_2\otimes h_1^{-1}\bigr) ch(L,h_1)\\
&=\sum_{j\geq 1} \frac{1}{j!}\Bigl(-\log \frac{h_2}{h_1}\Bigr) \Bigl(dd^c\bigl(-\log \frac{h_2}{h_1}\bigr)\Bigr)^{{j-1}} ch(L,h_1).
\end{split}
\]
\end{proof}

\subsection{Un rappel sur la théorie des fonctions de Bessel}

Dans cette introduction à la théorie des fonctions de Bessel, la principale référence est le chapitre 17 du \cite{Whittaker}. On s'intéressera à une sous-classe de fonctions de Bessel à savoir les fonctions de Bessel d'ordre un entier.\\

Pour tout $z$ fixé, la fonction
\[
t\mapsto e^{\frac{1}{2}z(t-\frac{1}{t})},\quad t\neq 0
\]
admet un développement en une série de Laurent en $t$. Soit $n\in \Z$. Par définition la fonction de Bessel d'ordre $n$ est la fonction qui pour tout $z\in \C$ associe $J_n(z)$ le coefficient de $t^n$ dans ce développement.\\

\begin{itemize}
\item[$\bullet$]
%\item[$\bullet$][$\bullet$]
 $J_n$ est une fonction analytique sur $\C$ telle que $J_{-n}=(-1)^nJ_n$ et
\[
J_n(z)=\sum_{r=0}^\infty \frac{(-1)^r(\frac{1}{2}z)^{n+2r}}{\Gamma(r+1)\Gamma(n+r+1)}\quad\forall\, z\in \C \quad\text{si}\quad n\in \N.
\]
\item[$\bullet$]
$J_n$ est une solution de l'équation différentielle linéaire suivante:
\[
\frac{d^2 y}{d^2 z}+\frac{1}{z}\frac{dy}{dz}+\bigl(1-\frac{n^2}{z^2} \bigr)y=0
\]

\textbf{Relations de récurrence}: On a pour tout $z\in \C$
\item[$\bullet$]
%\item[$\bullet$][$\bullet$]
\begin{equation}\label{B1}
\frac{d}{dz}\bigl(z^{-n}J_n(z)\bigr)=-z^{-n}J_{n+1}(z),
\end{equation}
\item[$\bullet$]
%\item[$\bullet$][$\bullet$]
\begin{equation}\label{B2}
J_n'(z)=\frac{n}{z}J_n(z)-J_{n+1}(z),
\end{equation}
\item[$\bullet$]
%\item[$\bullet$][$\bullet$]
\begin{equation}\label{B3}
J'_n(z)=\frac{1}{2}(J_{n-1}(z)-J_{n+1}(z) ),
\end{equation}

\item[$\bullet$]
%\item[$\bullet$][$\bullet$]
\begin{equation}\label{B4}
J_n'(z)=J_{n-1}(z)-\frac{n}{z}J_n(z),
\end{equation}
\item[$\bullet$]
%\item[$\bullet$][$\bullet$]
\begin{equation}\label{B5}
\frac{d}{dz}(z^n J_n(z))=z^n J_{n-1}(z),
\end{equation}
\item[$\bullet$]
%\item[$\bullet$][$\bullet$]
Les fonctions de Bessel modifiées d'ordre entier sont  les fonctions $I_n$  données par
\[
I_n(z):=i^{-n}J_n(iz)=\sum_{r=0}^\infty \frac{(\frac{1}{2}z)^{n+2r}}{\Gamma(r+1)\Gamma(n+r+1)}\quad \forall\,z\in \C,
\]

On a
\item[$\bullet$]
%M\item[$\bullet$][$\bullet$]
\begin{equation}\label{auvoisinagezero}
I_n(z)=\frac{1}{2^n \Gamma(n+1)}z^n+o(z^n), \quad |z|\ll1,
\end{equation}

\item[$\bullet$]
%\item[$\bullet$][$\bullet$]
\begin{equation}\label{B6}
I_{n-1}(z)-I_{n+1}(z)=\frac{2n}{z}I_n(z),
\end{equation}

\item[$\bullet$]
%\item[$\bullet$][$\bullet$]
\begin{equation}\label{B7}
\frac{d}{dz}(z^n I_n(z))=z^n I_{n-1}(z),
\end{equation}

\item[$\bullet$]

%\item[$\bullet$][$\bullet$]
\begin{equation}\label{B8}
\frac{d}{dz}(z^{-n}I_n(z))=z^{-n}I_{n+1}(z),
\end{equation}

\item[$\bullet$]
%\item[$\bullet$][$\bullet$]
\begin{equation}\label{B9}
\frac{d^2 I_n(z)}{dz^2}+\frac{1}{z}\frac{dI_n(z)}{dz}-(1+\frac{n^2}{z^2})I_n(z)=0.
\end{equation}

\end{itemize}

\begin{itemize}

\item[$\bullet$]
\begin{equation}\label{dvlptz1}
I_n(z)=\frac{e^z}{\sqrt{2\pi z}}\Bigl(1-\frac{4n^2-1}{8z}+O(\frac{1}{z^2}) \Bigr),
\end{equation}
avec $(|\arg(z)|<\frac{\pi}{2})$, cf. \cite[9.7.1]{Table2}.

\item[$\bullet$]
\begin{equation}\label{dvlptz2}
I_n'(z)=\frac{e^z}{\sqrt{2\pi z}}\bigl(1-\frac{4n^2+3}{8z}+O(\frac{1}{z^2}) \bigr),
\end{equation}
avec $(|\arg(z)|<\frac{\pi}{2})$,  cf. \cite[9.7.3]{Table2}.

\item[$\bullet$]

\begin{equation}\label{developpement1}
I_n(nz)=\frac{1}{(1+\eta_{k,1}(n,\infty))}\frac{e^{n\xi(z)}}{(2\pi n)^{\frac{1}{2}}(1+z^2)^\frac{1}{4}}\sum_{i=0}^\infty\frac{1}{n^i}U_i(z),
\end{equation}
$\forall z$ vérifiant $\mathrm{arg}(z)\leq \frac{\pi}{2}-\eps$, $0<\eps\ll1$, voir \cite[7.18 p.378]{Olver} ou \cite[9.7.7]{Table2}. Une estimation d'erreur est donnée par la formule \cite[7.14, p. 377]{Olver}.

\item[$\bullet$]

\begin{equation}\label{developpement2}
I_n'(nz)=\frac{(1+z^2)^\frac{1}{4}}{1+\eta_{2,1}(n,\infty)}\frac{e^{n\xi(z)}}{(2\pi n)^\frac{1}{2}z}\Bigl(1+\frac{1}{n}V_1+\frac{1}{n^2}(V_2-U_2) +\kappa_{2,1}(n,z)-\frac{z^2 p^3}{2n}\eta_{2,1}(n,z)  \Bigr),
\end{equation}
où
\begin{equation}\label{dev4}
V_1(z)=-\frac{3}{8}\frac{1}{(1+z^2)^\frac{1}{2}}+\frac{7}{24}\frac{1}{(1+z^2)^\frac{3}{2}},
\end{equation}
voir \cite[9.7.9]{Table2} ou \cite[ex\, 7.2\, p.378]{Olver}.

\end{itemize}

\bibliographystyle{plain}
\bibliography{biblio}

\vspace{1cm}

\begin{center}
{\sffamily \noindent National Center for Theoretical Sciences, (Taipei Office)\\
 National Taiwan University, Taipei 106, Taiwan}\\

 {e-mail}: {hajli@math.jussieu.fr}

\end{center}

\end{document}